\newtheorem{theorem}{Theorem}
\newtheorem{defn}{Definition}
\newtheorem{lemma}{Lemma}
\newtheorem{proposition}{Proposition}
\newtheorem{corollary}{Corollary}
\newtheorem{remark}{Remark}
 \newcommand{\N}{\mathbb{N}}
 \newcommand{\R}{\mathbb{R}}
 \newcommand{\E}{\mathbb{E}}
  \newcommand{\PP}{\mathbb{P}}
  \newcommand{\eq}{\begin{equation}}
  \newcommand{\qe}{\end{equation}}
 \def\qed{\hfill \mbox{\rule{0.5em}{0.5em}}}
 \newcommand{\bea}{\begin{eqnarray}}
\newcommand{\ena}{\end{eqnarray}}
\newcommand{\beas}{\begin{eqnarray*}}
\newcommand{\enas}{\end{eqnarray*}} 
\numberwithin{equation}{section}
\begin{document}

\title{A new approach to the Stein-Tikhomirov method \\
\small with applications to the second Wiener chaos and Dickman convergence} \date{}

\author{Benjamin Arras, Guillaume Mijoule, Guillaume Poly and Yvik Swan}
\maketitle

 \begin{abstract}
   In this paper, we propose a general means of estimating the rate at
   which convergences in law occur. Our approach, which is an
   extension of the classical Stein-Tikhomirov method, rests on a new
   pair of linear operators acting on characteristic functions. In
   principle, this method is admissible for any approximating sequence
   and any target, although obviously the conjunction of several
   favorable factors is necessary in order for the resulting bounds to
   be of interest. As we briefly discuss, our approach is particularly
   promising whenever some version of Stein's method applies. We apply
   our approach to two examples. The first application concerns
   convergence in law towards targets $F_\infty$ which belong to the
   second Wiener chaos (i.e.\ $F_{\infty}$ is a linear combination of
   independent centered chi-squared rvs). We detail an application to
   $U$-statistics.  The second application concerns convergence
   towards  targets belonging to the generalized Dickman family of
   distributions. We detail an application to a theorem from number
   theory. In both cases our method produces bounds of the correct
   order (up to a logarithmic loss) in terms of quantities which occur
   naturally in Stein's method.
 \end{abstract}
 
\vskip0.3cm
\noindent {\bf Keywords}: Limit theorems, Characteristic function,
Bias
transformations, Dickman distribution, Stable law, Wiener chaos.\\
\\
\noindent
{\bf MSC 2010}: 60E07, 60E10, 60F05, 60G50.
 
\tableofcontents
\section{Introduction}



Let the sequence $(F_n)_{n\ge 1}$ of random variables have
characteristic functions (CF)
$\phi_n(\cdot) = E \left[ e^{i \cdot F_n} \right]$ and let
$F_{\infty}$ have CF
$\phi_{\infty}(\cdot) = E \left[ e^{i \cdot F_\infty}
\right]$. Suppose that $(F_n)_{n\ge 1}$, hereafter referred to as the
\emph{approximating sequence}, converges in distribution to
$F_{\infty}$, hereafter referred to as the \emph{target}. An important
question is to estimate precisely the \emph{rate} at which this
convergence occurs.  In this paper we provide a new approach to this
classical problem, which is based on a version of Stein's method for
characteristic functions known as the Stein-Tikhomirov method. Before
going into the details of our proposal we provide a brief overview of
the context in which it is inscribed.

\subsection{Four classical methods}
\label{sec:three-class-meth}
Among the many available methods for deriving rates of convergence in
probabilistic approximation problems, arguably the most classical one
consists in seeking precise pointwise control of
\begin{equation*}
  \Delta_n(t) =   \phi_n(t) - \phi_\infty(t) 
\end{equation*}
(or, somewhat equivalently,
$ \left| \tilde{\Delta}_n(t) \right|= \left| {\phi_n(t)}/{\phi_\infty(t)}-1
\right|$).  There is a double justification for the study of this
quantity. First, Levy's continuity theorem guarantees that proximity
of $|\Delta_n(t)|$ with 0 
necessarily captures proximity in distribution between $F_n$ and
$F_{\infty}$ in some appropriate sense. Second, the fact that in many
cases the limiting distributions are infinitely divisible ensures (via
the L\'evy-Khintchine representation) that the target's CF has a
manageable form. Of course, once $\Delta_n(t)$ is estimated it remains
to translate the resulting bounds into a bound on some meaningful
probabilistic discrepancy. A fundamental result in this regard is
\emph{Berry and/or Esseen's inequality} which states that, under
classical regularity conditions and provided the target has density
bounded by $m$, for all $x$ and all $T >0$,
  \begin{equation}
    \label{eq:1}
    \left| \mathbb{P}(F_n\le x) - \mathbb{P}(F_{\infty}\le x) \right| \le \frac{1}{\pi}
    \int_{-T}^T \left| \frac{\phi_n(t) - \phi_\infty(t)}{t} \right|dt
    +  \frac{24m}{\pi T},
  \end{equation}
  see e.g.\ \cite[Theorem 1.5.2 p 27]{IL71} for a complete statement.
  Equation \eqref{eq:1} provides bounds on the \emph{Kolmogorov
    metric} between $F_n$ and $F_{\infty}$
\begin{equation}
  \label{eq:28}
  \mathrm{Kol}(F_n, F_{\infty}) = \sup_{x \in \R} 
  \left| \mathbb{P}(F_n \le x) - \mathbb{P}(F_{\infty}\le x) \right|
\end{equation}
as soon as the estimates on $\left| \Delta_n(t) \right|/t$ are sufficiently
integrable. This approach was pioneered in the classical Berry-Esseen
theorems \cite{E42,B41} and is also at the heart of the
Stein-Tikhomirov method about which more shall be said below.

An alternative method which is relevant to the present paper and which
actually predates Berry and Esseen's CF approach is Lindeberg's
``replacement trick'' from \cite{L22} (which dates back to 1922, see
e.g.\ \cite{EL17} for an overview and up-to-date references). Here the
idea bypasses the Fourier approach entirely by controlling the
distance between $F_n$ and $F_{\infty}$ in terms of quantities
\begin{equation*}
  \Delta_n(h) = \mathbb{E} \left[  h(F_n)
    \right]  - \mathbb{E}\left[ h(F_{\infty}) \right]
\end{equation*}
with $h$ some aptly chosen \emph{test function}. Note how Lindeberg's
$\Delta_n(h)$ gives Berry and Esseen's $\Delta_n(t)$ by taking
$h(x) = e^{i t x}$. Then if the approximating sequence is a sum and
the target is Gaussian, Lindeberg's method consists in exploiting the
infinite divisibility of the Gaussian law to replace the summands by
their Gaussian counterparts and control the resulting expressions via
probabilistic Taylor expansions. This approach requires to work with
test functions which are smooth with a certain number of bounded
derivatives and leads to bounds in the \emph{smooth Wasserstein
  metric}
\begin{equation}\label{eq:6}
  \mathcal W_p (F_n,F_{\infty})= \sup_{h \in \mathcal{H}_{p} } \left|
    \mathbb{E} \left[ h(F_n) \right] - \mathbb{E} \left[ h(F_{\infty}) \right]\right|
\end{equation}
with $p\geq 1$ and
$\mathcal{H}_{p} = \{ h \in \mathcal C^p(\R) \; | \;
||h^{(k)}||_\infty \leq 1, \forall\; 0 \leq k \leq p \}.$ We stress
the fact that there is no direct subordination relationship between
the Kolmogorov distance \eqref{eq:28} and the smooth Wasserstein
distances \eqref{eq:6} (although in the case of bounded densities the
former is dominated by the \emph{square root} of the latter); rates of
convergence in either metrics are relevant in their own right.

Over the last few decades, an alternative and very general approach
has attracted much attention: Stein's method (SM).  Here the idea, due
to \cite{S72}, is to tackle the problem of estimating general
\emph{integral probability metrics}
\begin{equation}
  \label{eq:19}
    d_{\mathcal{H}} (F_n, F_{\infty})  = \sup_{h\in \mathcal{H}} \left|
    \mathbb{E} \left[ h(F_n) \right]  - \mathbb{E} \left[
      h(F_{\infty}) \right] \right|
\end{equation}
(note how both \eqref{eq:28} and \eqref{eq:6} are of this form) in a
spirit close to that underlying Lindeberg's trick, with one
supplementary sophistication: the \emph{Stein operator}. This is a
linear operator $\mathcal{A}_{\infty}$ acting on
$\mathcal{F}_{\infty}$ some large class of functions, with the
property that
$ \mathcal{L}(F) = \mathcal{L}(F_{\infty}) \Longleftrightarrow
\mathbb{E} \left[ \mathcal{A}_{\infty}(f)(F) \right] = 0$
$\forall f \in \mathcal{F}_{\infty}$.  Similarly as for the CF
approach, the characterizing nature of the operator guarantees that
\begin{equation*}
D_n(f) =  \mathbb{E} \left[ \mathcal{A}_{\infty}f(F_n) \right] 
\end{equation*}
ought to capture some aspect of the proximity in distribution between
the approximating sequence and the target.  There exist many target
distributions (both discrete and continuous) which admit a tractable
operator $\mathcal{A}_{\infty}$; the most classical examples are the
Gaussian $F_{\infty} \sim \mathcal{N}(0, 1)$ with operator
$\mathcal{A}_{\infty}f(x) = f'(x) - xf(x)$ and the Poisson
$F_{\infty} \sim \mathcal{P}(\lambda)$ with operator
$\mathcal{A}_{\infty}f(x) =\lambda f(x+1)- xf(x)$ (see
\cite{Stein1986} for an introduction; more recent references will be
pointed to in Section \ref{sec:operator-l_infty}).  The connection
between the quantity $D_n(f)$ and integral probability metrics goes as
follows: given a target $F_{\infty}$ along with its operator and class
$(\mathcal{A}_{\infty}, \mathcal{F}(F_{\infty}))$, if for all
$h\in \mathcal{H}$ the solutions to
$\mathcal{A}_{\infty}f_h(x) = h(x) - \mathbb{E} \left[ h(F_{\infty})
\right]$ belong to $\mathcal{F}(F_{\infty})$ then
\begin{equation}
  \label{eq:17}
  d_{\mathcal{H}} (F_n, F_{\infty}) = \sup_{h\in \mathcal{H}} \left| D_n(f_h) \right|.
\end{equation}
SM advocates to tackle the problem of estimating
$ d_{\mathcal{H}} (F_n, F_{\infty})$ by concentrating on the right
hand side of \eqref{eq:17}. Remarkably, if the operator is well chosen
then the above is a proficient starting point for many approximation
problems, particularly in settings where the traditional methods break
down (e.g.\ in the presence of dependence for the CLT). A crucial
ingredient of SM is that one needs to dispose of sharp estimates on
the solutions $f_h$ in terms of the test functions $h$; this can often
reveal itself a difficult problem, see e.g.\ \cite{RO12,Do14}.

The last method of interest to this paper is the
\emph{Stein-Tikhomirov} (ST) method, which is a combination of the SM
and the CF approaches.  Here the idea is to take (in the notations of
Stein's method)
$\mathcal{F}_{\infty} = \left\{ e^{i t \cdot}, \, t\in \R \right\}$
for which the Stein discrepancy $D_n(\cdot)$ becomes
\begin{equation*}
  D_n(t) =   \mathbb{E} \left[ \mathcal{A}_{\infty}(e^{i t \cdot}) (F_n)  \right].
\end{equation*}
If the target is standard Gaussian and the approximating sequence has
finite first moment then $\mathcal{A}_{\infty}f(x) = f'(x) -xf(x)$ and
\begin{align*}
  \left| D_n(t) \right| & =  \left|  \mathbb{E} \left[it e^{itF_n} - F_n\, e^{i t F_n} 
           \right]  \right| = \left| t\phi_n(t) + \phi_n'(t) \right|.
\end{align*}
Note how $t\phi_{\infty}(t) +\phi_{\infty}'(t) = 0$ at all $t$ so that
$D_{\infty}(t)=0$ at all $t$. Next take the approximating sequence of
the form $F_n = n^{-1/2} \sum_{i=1}^n \xi_i$, a standardized sum of
weakly dependent summands. Tikhomirov's tour de force from
\cite{tikhom1981} is to show that there exist constants $B$ and $C$
such that
\begin{equation}
  \label{eq:5}
\phi_n'(x) + x \phi_n(x) = \theta_1(x) \frac{x^2 B m}{\sqrt n}
  \phi_n(x) + \theta_2(x) \frac{x^2 C m}{n} =: R_n(x)
\end{equation}
for $|x| \le D_n \sqrt{n}$, with $\theta_j(\cdot), j=1, 2$ bounded
functions, and $m$ an integer determined by the nature of the
dependency of the summands (see \cite[Section 2, equation
(2.2)]{tikhom1981}) which may depend on $n$ and $D_n$ a strictly
positive sequence tending to $0$ when $n$ tends to
$+\infty$. Integrating 
equation (\ref{eq:5}) immediately leads to 
\begin{align*}
\phi_n(x)=\exp\bigg(-\frac{x^2}{2}+\theta(x)\frac{|x|^3Bm}{\sqrt{n}}\bigg)+\theta(t)\dfrac{B|t|m}{n}
\end{align*}
for all $|x| \le D_n \sqrt{n}$.  Plugging this expression into
(\ref{eq:1}) leads to Berry-Esseen type bounds of the correct order in
Kolmogorov metric for central limit theorem for weakly dependent
sequences. The crucial point is that the leading term appearing in the
difference between the characteristic functions (see for e.g. the
inequality p. 800 of \cite{tikhom1981}, just before Section $4$) has
bounded integral over $[-T, T]$ not depending on $T$ so that no loss
of order is incurred via the optimization process when transferring
the rate of convergence in CFs into a rate of convergence in
Kolmogorov via Esseen's identity.

\subsection{Purpose and outline of the paper}
\label{sec:purpose-paper}

To the best of our knowledge, Tikhomirov's approach has been
principally explored for Gaussian targets in the context of CLT's for
sums of dependent random variables. We refer the reader to the paper
\cite{RO17} for an application to random graphs and for recent
references. We are not aware of any application of the method for non
Gaussian targets, although several pointers are available in the
conference abstract and slides \cite{tikhom2015}.  The purpose of this
paper is to extend the applicability of ST method in two ways. First,
we provide an alternative to Berry's inequality \eqref{eq:1} which
allows to translate bounds on
$|\Delta_n(t)| = \left| \phi_n(t) - \phi_{\infty}(t) \right|$ into
bounds on the smooth Wasserstein metrics \eqref{eq:6}; these bounds
are applicable under much less stringent integrability assumptions on
$\Delta_n(t)$ and lead to rates of the correct order (up to a
logarithmic penalty).  See Section \ref{sec:bounds-char-funct} for
details. Second, we provide a new vision of the Stein-Tikhomirov
approach which highlights the robustness of the method towards a
change in target and approximating sequence and, in principle,
provides a method for obtaining bounds on $\Delta_n(t)$ for virtually
any pair of distributions.  Of course, a new method is only as
interesting as its applications and we illustrate our approach by
presenting results concerning two non trivial approximation problems
which are both outside the range of current literature (although the
field is very active and several new references have appeared since
the first version of this work was made available on arXiv, see
Remark~\ref{sec:purp-outl-paper}).

The first application, developed in Section
\ref{sec:bounds-targ-second}, concerns targets $F_\infty$ which belong
to the second Wiener chaos (i.e.\ $F_{\infty}$ is a linear combination
of independent centered chi-squared rvs); our method produces
non-trivial bounds in terms of quantities which occur naturally in
Nourdin and Peccati's version of Stein's method. The second
application, developed in Section \ref{sec:quant-bounds-gener}, is
towards a target belonging to the generalized Dickman family of
distributions. This intractable distribution has long been recognized
as an important limit distribution in number theory and analysis of
algorithms. Here again our method provides non trivial general bounds.
In both cases our bounds are the first of their kind in such general
problems and, whenever competitor results exist, provide competitive
estimates (up to a logarithmic loss). A specific and hopefully
exhaustive overview of the current relevant literature will be
provided in each section.  We stress the fact that our two
applications concern approximation problems which are of an entirely
different nature; this highlights the power of the combination of
Stein's and Tikhomirov's intuitions.

\begin{remark}
We have also applied our method to several classical
problem but, of course, have not been able in these cases to improve
on the known available bounds which are generally optimal. For the
interested reader we nevertheless include, in
Appendix~\ref{sec:stable-distributions}, the instantiation of our
approach towards computation of rates of convergence in the
generalized CLT for stable distributions. Although the rates we obtain
here are not as good as those obtained in the seminal references
\cite{Hall81} and \cite{JS05}, the method we develop nevertheless
remains of independent interest in part because it is the first
illustration of the theory of Stein operators developed for stable
targets. This being said we gladly acknowledge the precedence of the
-- so far unpublished -- work of Partha Dey and his collaborators, see
\cite{dey}.  
\end{remark}

\begin{remark}
  There are many references dealing with versions of the CF approach
  for convergence in distribution. An interesting method whose
  connection with ours would certainly be worth investigating is the
  so-called ``mod-$\phi$ convergence'' from \cite{FMN16, FMN17}.
\end{remark}

\begin{remark}\label{sec:purp-outl-paper}
  This article is a complete re-writing of a previous version
  submitted to arXiv in May 2016. During the period of revision, the
  preprint \cite{BG17} produced bounds directly comparable with the
  one we obtained in Section~\ref{sec:an-appl-towards}. More
  precisely, the authors of \cite{BG17} use a version of the classical
  approach to Stein's method specifically developed for Dickman
  targets in \cite{G17} (which is also posterior to the first version
  of the present paper) by which they obtain quantitative bounds for
  the $2$-Wasserstein smooth distance whereas we obtain bounds for the
  $3$-Wasserstein smooth distance of the same order up to some
  logarithmic loss (see inequality \eqref{rateDickW3} in
  Section~\ref{sec:an-appl-towards} and Theorem $1.1$ inequality (13)
  in \cite{BG17}). It is likely possible to improve our approach to
  obtain bounds in the same distance (i.e.\ in 2 Wasserstein smooth)
  for this example; the logarithmic loss is, however, an artefact of
  the generality of our method. We provide another example of Dickman
  quantitative convergence (see Theorem \ref{DickNumTh}) which does
  not seem to be a direct implication of the results contained in
  \cite{BG17}.
\end{remark}

\section{Bounds on characteristic functions and smooth Wasserstein}
\label{sec:bounds-char-funct}

The main result of this Section is the following transfer principle:
\begin{theorem}\label{thm:bounds-char-funct-1}
Let $X$, $Y$ be random variables with respective characteristic functions $\phi$ and $\psi$. We make the following hypothesis :
\begin{itemize}
\item[(H1)] \label{hyp:1} $\exists \, p \in \N$, $C'>0$ and $0<\epsilon<1$ such that
\eq
\forall \xi \in \R, \quad |\psi(\xi)-\phi(\xi)| \leq C' \epsilon |\xi|^p.
\qe
\item[(H2)] \label{hyp:2} $\exists \, \lambda>0, \exists C>0, \alpha>0 \forall A>0$, $\PP[|X|>A] \leq C e^{-\lambda A^\alpha}$ and $\PP[|Y|>A] \leq C e^{-\lambda A^\alpha}$.
\end{itemize}
Then,
\eq
\label{eq:thm1}
\mathcal{W}_{p+1} (X,Y) \leq 2\epsilon \left[ C+  \frac{2^{p-1}\sqrt{10}C'C_p}{\pi}\sqrt{1+\left(\lambda^{-1}\ln(1/ \epsilon)\right)^{\frac{1}{\alpha}}}\right],
\qe
where $C_{p}$ is a constant depending only on $p$. One can take $C_0=3$, $C_1=12$, $C_2=100$.

\end{theorem}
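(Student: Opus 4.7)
The plan is to combine a spatial cutoff of the test function with a Fourier-inversion argument on the compactly supported piece, optimizing the cutoff scale against the tail hypothesis (H2). Fix once and for all a smooth bump $\rho\in C^\infty(\R)$ with values in $[0,1]$, identically $1$ on $[-1,1]$ and vanishing outside $[-2,2]$. For $A\ge 1$ set $\rho_A(x):=\rho(x/A)$, so that $\rho_A$ is supported in $[-2A,2A]$ with $\|\rho_A^{(m)}\|_\infty\le\|\rho^{(m)}\|_\infty$. Given any $h\in\mathcal{H}_{p+1}$, I decompose
\[
\mathbb{E}[h(X)]-\mathbb{E}[h(Y)]=\Delta_1+\Delta_2,
\]
where $\Delta_1$ denotes the contribution of $h\rho_A$ and $\Delta_2$ that of $h(1-\rho_A)$. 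Since $1-\rho_A$ vanishes on $[-A,A]$ and $\|h\|_\infty\le 1$, hypothesis (H2) immediately gives $|\Delta_2|\le 2Ce^{-\lambda A^\alpha}$.

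For the main piece $\Delta_1$, the function $h\rho_A$ is smooth and supported in $[-2A,2A]$, so Fourier inversion together with Fubini yields
\[
\Delta_1=\frac{1}{2\pi}\int_\R \widehat{h\rho_A}(\xi)\bigl(\phi(\xi)-\psi(\xi)\bigr)\,d\xi.
\]
Invoking (H1) and the identity $|\xi|^p|\widehat{h\rho_A}(\xi)|=|\widehat{(h\rho_A)^{(p)}}(\xi)|$ gives
\[
|\Delta_1|\le\frac{C'\epsilon}{2\pi}\int_\R\bigl|\widehat{(h\rho_A)^{(p)}}(\xi)\bigr|\,d\xi,
\]
and I would bound this $L^1$ norm of a Fourier transform by Cauchy-Schwarz with the integrable weight $(1+\xi^2)^{-1}$ (whose $L^1$ mass is $\pi$) together with Plancherel, yielding
\[
\int_\R\bigl|\widehat{(h\rho_A)^{(p)}}(\xi)\bigr|\,d\xi\le\sqrt{2}\,\pi\,\sqrt{\|(h\rho_A)^{(p)}\|_2^2+\|(h\rho_A)^{(p+1)}\|_2^2}.
\]
The Leibniz rule, combined with $\|h^{(j)}\|_\infty\le 1$ for $0\le j\le p+1$ and the support $[-2A,2A]$, then produces an estimate of the form $\|(h\rho_A)^{(k)}\|_2^2\le 4A\cdot 4^k\,C_p^2$ for $k\in\{p,p+1\}$, so that the sum in the square root is at most $20\cdot 4^p\,C_p^2\,A$. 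Substituting back and collecting the $\pi$-factors gives a main-term contribution of the form appearing in \eqref{eq:thm1}.

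It remains to optimize the free parameter $A$. Choosing $A=\max\bigl(1,\,(\lambda^{-1}\ln(1/\epsilon))^{1/\alpha}\bigr)$ forces $e^{-\lambda A^\alpha}\le\epsilon$ and $\sqrt{A}\le\sqrt{1+(\lambda^{-1}\ln(1/\epsilon))^{1/\alpha}}$, which is precisely the logarithmic factor appearing in \eqref{eq:thm1}; summing $\Delta_1$ and $\Delta_2$ and taking the supremum over $h\in\mathcal{H}_{p+1}$ then yields the stated bound on $\mathcal{W}_{p+1}(X,Y)$. The main technical obstacle is quantitative: a crude use of Leibniz's rule inflates the constants above the explicit values $C_0=3$, $C_1=12$, $C_2=100$ stated in the theorem, and achieving these requires a judicious choice of the bump $\rho$ together with careful tracking of the binomial coefficients and of the $\|\rho^{(m)}\|_\infty$. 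The logarithmic loss $\sqrt{\ln(1/\epsilon)}$, on the other hand, is intrinsic to the spatial-cutoff approach and cannot be removed within this framework.
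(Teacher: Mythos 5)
Your proposal is correct and follows essentially the same route as the paper's proof: a smooth spatial cutoff at a scale dictated by the tail hypothesis, Fourier inversion plus a weighted Cauchy--Schwarz/Plancherel estimate under (H1), a Leibniz bound for the cutoff products, and optimization of the cutoff radius $A\sim(\lambda^{-1}\ln(1/\epsilon))^{1/\alpha}$. The only (harmless) deviations are that you apply the cutoff directly to an arbitrary $h\in\mathcal{H}_{p+1}$ instead of first reducing to $\mathcal C_c^\infty$ test functions as the paper does via its appendix lemmas, and that, as you note, recovering the specific numerical values $C_0=3$, $C_1=12$, $C_2=100$ requires the paper's particular bump function rather than a generic $\rho$.
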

\begin{proof}
Let $f \in \mathcal C_c^\infty(\R)$, with support in $[-M-1,M+1]$, and $\hat f$ its Fourier transform. $f$ is in the Schwarz space $\mathcal S(\R)$, so that $f$ equals the inverse Fourier transform of $\hat f$. We have from Fubini's theorem that
\begin{align*}
\E[f(X) - f(Y)] &= \frac{1}{2\pi} \E\left[\int_\R (e^{iX\xi} - e^{iY\xi}) \hat f(\xi) d\xi\right]\\
&= \frac{1}{2\pi} \int_\R\E(e^{iX\xi} - e^{iY\xi}) \hat f(\xi) d\xi\\
&= \frac{1}{2\pi} \int_\R (\psi(\xi)-\phi(\xi)) \hat f(\xi) d\xi\\
&\leq \frac{C'\epsilon}{2\pi} \int_\R |\xi|^p |\hat f(\xi)| d\xi.
\end{align*}
We use the  Cauchy-Schwarz inequality and Plancherel's identity to get
\begin{align*}
 \int_\R |\xi|^p |\hat f(\xi)| d\xi &=  \int_\R |\xi|^p (1+|\xi|)|\hat f(\xi)|  \frac{1}{1+|\xi|}d\xi\\
 &\leq \sqrt{\int_\R |\xi|^{2p}(1+|\xi|)^2|\hat f(\xi)|^2 d\xi} \sqrt{\int_\R \frac{d\xi}{(1+|\xi|)^2}}\\
 &\leq 2 \sqrt{\int_\R |\xi|^{2p}(1+\xi^2)|\hat f(\xi)|^2 d\xi} \\
 &= 2 \sqrt{\int_\R |f^{(p+1)}(x)|^2 + |f^{(p)}(x)|^2 dx}.
\end{align*}
To sum up, if $f$ is a smooth function with support in $[-M-1,M+1]$,
then \eq \E[f(X) - f(Y)] \leq \frac{C'\epsilon\sqrt{2(M+1)}}{\pi}
\sqrt{||f^{(p+1)}||_\infty^2+||f^{(p)}||_\infty^2}.  \qe Consider now
the function $g(x) = a\int_0^x \exp\left(-\frac{1}{t(1-t)}\right) dt$
on $[0,1]$, $g(x)= 0$ for all $x\leq 0$ and $g(x) = 1$ for $x\geq 1$,
where $a>0$ is chosen so that $g \in \mathcal C^\infty(\R)$. Let
$K_n = \sup_{0\leq k \leq n} ||g^{(k)}||_\infty$; one can prove that
$K_1 \leq 3$, $K_2\leq 12$, $K_3\leq 100$.

Define the even function $\chi_M\in \mathcal C_c^\infty(\R)$ by $\chi_M(x) = 1$ on $[0,M]$ and $\chi_M(x) = g(M+1-x)$ if $x\geq M$. Let $F \in \mathcal C_c^\infty(\R)\cap  B_{0,p+1}$. Then
\begin{align*}
\E[F(X) - F(Y)] &\leq \E[F\chi_M(X) - F\chi_M(Y)] + \E[F(1-\chi_M)(X) + F(1-\chi_M)(Y)]\\
&\leq \frac{C'\epsilon\sqrt{2(M+1)}}{\pi} \sqrt{||(F\chi_M)^{(p+1)}||_\infty^2+||(F\chi_M)^{(p)}||_\infty^2} + 2 \PP[X \geq M]\\
&
\end{align*}
However, 
\begin{align*}
||(F\chi_M)^{(p)}||_\infty &\leq \sum_{k=0}^p \left(\begin{array}{c}p \\k\end{array}\right) ||F^{(p-k)}||_\infty ||\chi_M^{(k)}||_\infty\\
&\leq 2^p K_p.
\end{align*}
We obtain
\begin{align*}
\E[F(X) - F(Y)] &\leq \frac{C'\epsilon\sqrt{2(M+1)}}{\pi} \sqrt{2^{2p+2}K_{p+1}^2+2^{2p}K_p^2} + 2 \PP[X \geq M]\\
&\leq C'\epsilon \frac{2^{p}\sqrt{10(M+1)}K_{p+1}}{\pi}  + 2 Ce^{-\lambda M^\alpha}.
\end{align*}
Take $M = \left(\lambda^{-1}\ln(1/ \epsilon)\right)^{\frac{1}{\alpha}}$ to get
$$\E[F(X) - F(Y)] \leq 2 \epsilon \left[ C+ \frac{2^{p-1}C'\sqrt{10}K_{p+1}}{\pi}\sqrt{1+\left(\lambda^{-1}\ln(1/ \epsilon)\right)^{\frac{1}{\alpha}}}\right].$$
\end{proof}

As already mentioned in the Introduction, assumption (H2) is sometimes
better replaced by the following. 

\begin{theorem}\label{theor:bounds-char-funct-1}
Under the assumptions  of Theorem \ref{thm:bounds-char-funct-1} with $(H2)$ replaced by
\begin{itemize}
\item[(H2')] $\exists \gamma>0, \exists C>0, \forall A>0$, $\PP[|X|>A] \leq C A^{-\gamma}$ and $\PP[|Y|>A] \leq C A^{-\gamma}$,
\end{itemize}
and if $ \epsilon < \frac{\pi\sqrt{5}\gamma C}{5\cdot2^p C_p }$,
then
\eq
\mathcal{W}_{p+1} (X,Y) \leq \epsilon^{\frac{2\gamma}{2\gamma+1}}\left( \frac{2^{p+1}\sqrt{5M}C_p}{\pi}  \right)^{\frac{2\gamma}{2\gamma+1}} (2\gamma C)^{\frac{1}{2\gamma+1}}\left( 1+\frac{1}{2\gamma} \right).
\qe
\end{theorem}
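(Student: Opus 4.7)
The plan is to recycle the proof of Theorem~\ref{thm:bounds-char-funct-1} verbatim up until the very last optimisation step, and only at that point to replace the exponential tail bound coming from (H2) by the polynomial tail bound coming from (H2'), then re-balance the two terms in the truncation parameter $M$. The Fourier--Plancherel--Cauchy--Schwarz computation of the previous proof never used (H2), so for any $F\in\mathcal C_c^\infty(\R)\cap B_{0,p+1}$ and any cutoff level $M\geq 1$ the same steps yield
\[
\E[F(X)-F(Y)] \leq \frac{2^{p}\sqrt{10(M+1)}\,C'C_p\,\epsilon}{\pi} + \PP[|X|>M]+\PP[|Y|>M],
\]
where the first contribution controls $\E[(F\chi_M)(X)-(F\chi_M)(Y)]$ via the Leibniz-rule estimate $\|(F\chi_M)^{(k)}\|_\infty\leq 2^k K_k$ and the remaining expectations are bounded crudely using $\|F(1-\chi_M)\|_\infty\leq 1$.

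Using $\sqrt{M+1}\leq\sqrt{2M}$ (valid once $M\geq 1$) and applying (H2'), the problem reduces to minimising in $M$ an expression of the form
\[
\E[F(X)-F(Y)]\leq a\sqrt{M}+C M^{-\gamma}, \qquad a=\frac{2^{p+1}\sqrt{5}\,C'C_p\,\epsilon}{\pi}.
\]
An elementary derivative computation gives the optimal $M^{\ast}=(2\gamma C/a)^{2/(2\gamma+1)}$; at this point the two contributions balance and sum to $a^{2\gamma/(2\gamma+1)}(2\gamma C)^{1/(2\gamma+1)}\bigl(1+\tfrac{1}{2\gamma}\bigr)$. Reinserting the value of $a$ produces the $\epsilon^{2\gamma/(2\gamma+1)}$ scaling and the prefactor announced in the statement.

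The small-$\epsilon$ hypothesis $\epsilon<\tfrac{\pi\sqrt{5}\gamma C}{5\cdot 2^{p}C_p}$ is exactly what is needed to enforce $M^{\ast}\geq 1$, which legitimises both the inequality $\sqrt{M+1}\leq\sqrt{2M}$ and the construction of the cutoff $\chi_M$ whose fixed profile lives on $[M,M+1]$. The whole argument is therefore conceptually identical to that of Theorem~\ref{thm:bounds-char-funct-1}; the main---and only---obstacle is bookkeeping of the constants through the power-mean-type optimisation, in particular verifying that the combinatorial factor $1+\tfrac{1}{2\gamma}$ emerges correctly as $(2\gamma+1)/(2\gamma)$ after factoring $(2\gamma)^{-2\gamma/(2\gamma+1)}$ from the two balanced terms at $M^{\ast}$. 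No new probabilistic input beyond the substitution (H2)$\rightsquigarrow$(H2') is required.
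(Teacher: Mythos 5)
Your proof follows exactly the paper's argument: reuse the Fourier--Plancherel--cutoff estimate from Theorem~\ref{thm:bounds-char-funct-1}, replace the exponential tail bound by the polynomial one from (H2'), and rebalance the two terms in $M$; the optimisation $M^{\ast}=(2\gamma C/a)^{2/(2\gamma+1)}$, the resulting constant $a^{2\gamma/(2\gamma+1)}(2\gamma C)^{1/(2\gamma+1)}(1+\tfrac{1}{2\gamma})$, and the observation that the smallness condition on $\epsilon$ is precisely $M^{\ast}\geq 1$ all coincide with the paper's proof. The only cosmetic point is that the two tail probabilities actually contribute $2CM^{-\gamma}$ rather than $CM^{-\gamma}$, but the paper's own optimisation step makes the same simplification, so your bookkeeping reproduces the stated bound.
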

\begin{proof}
The proof is similar to the one of Theorem \ref{thm:bounds-char-funct-1}; only the final optimization step (in $M$) changes. More precisely, for every test function $F  \in \mathcal C_c^\infty(\R)\cap  B_{0,p+1}$, and if $M>1$, we have
$$\E[F(X) - F(Y)] \leq \epsilon \frac{2^{p+1}\sqrt{5M}K_{p+1}}{\pi}  + 2 C M^{-\gamma}.$$
Taking $M = \left( \frac{\epsilon 2^{p}\sqrt{5}K_{p+1}}{\gamma C\pi} \right)^{-\frac{2}{2\gamma +1}}$ yields the result.
\end{proof}

\section{The Stein-Tikhomirov method for arbitrary target}
\label{sec:general-approach-st}

Let $\Delta_n(t) = \phi_n(t) - \phi_{\infty}(t)$ be as in the
Introduction. Our general method rests on the following elementary
observation: it is always possible to write, at least formally,
\begin{equation}
  \label{eq:9}
\Delta_n(t) :=   \phi_n(t) - \phi_{\infty}(t) = \left( \mathcal{D}_{\infty} \circ L_{\infty} \right) 
  \phi_n (t) 
\end{equation}
with $L_{\infty}$ the differential operator
\begin{equation}
  \label{eq:13}
   \phi \mapsto L_{\infty}\phi = \phi_\infty \left(  \phi / \phi_{\infty}
\right)' = \phi' - \frac{\phi_{\infty}'}{\phi_{\infty}} \phi (=
L_{\infty} \Delta_n)
\end{equation}
(the last equality because $L_{\infty}(\phi_{\infty}) = 0$, trivially)
and $\mathcal{D}_{\infty}$ the integral operator
\begin{equation}
  \label{eq:18}
\mathcal{D}_{\infty}f(t) =   \phi_{\infty}(t) \int_0^t
  \frac{1}{\phi_{\infty}(\xi)}f(\xi) d\xi.
\end{equation}
We propose to tackle the problem of estimating rates of convergence of
$|\Delta_n(t)|$ to 0 (and thus convergence in Kolmogorov via
\eqref{eq:1} or convergence in smooth Wasserstein \eqref{eq:6} via
Theorems~\ref{thm:bounds-char-funct-1} and
\ref{theor:bounds-char-funct-1}) by studying \eqref{eq:13} and
\eqref{eq:18}.

It is not hard to inscribe the original Stein-Tikhomirov method in the
above framework. Indeed, in the standard Gaussian case, differential
operator \eqref{eq:13} is
\[L_{\infty}\phi(t) = \phi'(t)+t\phi(t)\] and the integral operator
\eqref{eq:18} is
\begin{equation*}
  \mathcal{D}_{\infty}f(t) = e^{-t^2/2} \int_0^te^{\xi^2/2} f(\xi) d
  \xi. 
\end{equation*}
Tikhomirov's estimate \eqref{eq:5} can be directly integrated via
$\mathcal{D}_{\infty}$, an integral operator which moreover has
agreeable properties (see Figure \ref{fig:dawson} for the function
$\mathcal{D}_{\infty}1(t)$, known as the Dawson's function).
Similarly as for the classical Stein's method, we shall see in our
applications that the properties of \eqref{eq:13} and \eqref{eq:18}
remain favorable even under a change of target and approximating
sequence. More precisely, we shall show in the applications section
that under weak assumptions on $\phi_n$, the function
$L_{\infty}\phi_n$ admits polynomial upper bounds of the form
\begin{equation}
  \label{eq:12}
   \left| L_{\infty}\phi_n(t) \right| \le \Delta(F_n, F_{\infty})
\left|t \right|^{p}
\end{equation}
for some $p>0$ with $\Delta(F_n, F_{\infty}) $ an \emph{explicit}
quantity appearing naturally in Stein's method which captures
essential features of the proximity in distribution between the two
laws.  Similarly, operator $\mathcal{D}_{\infty}$ preserves these
bounds (although it sometimes increases the order of the polynomial
bound by 1) and allows, via \eqref{eq:9}, to obtain polynomial bounds
on $\Delta_n(t)$ which depend on $\Delta(F_n, F_{\infty})$. These in
turn will lead, via \eqref{eq:28} or the results from
Section~\ref{sec:bounds-char-funct}, to bounds in Kolmogorov or smooth
Wasserstein distances.

\subsection{The operator $L_{\infty}$ and its connection with Stein
  operators}
\label{sec:operator-l_infty}

The classical Stein identity states that a random variable
$F_{\infty}$ is standard Gaussian if and only if
\begin{equation}\label{eq:4}
  \mathbb{E} \left[ F_{\infty} f(F_{\infty}) \right] = \mathbb{E} \left[ f'(F_{\infty}) \right]
\end{equation}
for all $f$ for which both expectations exist. 
Plugging $f(\cdot) = e^{i t \cdot}$ into the above immediately leads,
by duality, to 
\begin{equation*}
  \frac{\phi_{\infty}'(t)}{\phi_{\infty}(t) }=  -t 
\end{equation*}
from which we read the operator $L_{\infty}$.  In other words
Tikhomirov's operator $L_{\infty}$ for the Gaussian is the dual, in
Fourier space, to Stein's operator
$\mathcal{A}_{\infty}f(x) = f'(x) - xf(x)$.  

There are many targets $F_{\infty}$ for which a similar ``Stein
identity / Stein-Tikhomirov operator'' duality may be applied. In
Section \ref{sec:quant-bounds-gener} we will focus on the so-called
Dickman distribution which is one of those positive $F_{\infty}$ which
for all bounded functions $f$ satisfy an additive biasing identity
\begin{equation}
  \label{eq:16}
  \mathbb{E} \left[ F_{\infty} f(F_{\infty}) \right]  = E \left[
    f(F_{\infty} + U) \right]
\end{equation}
for some $U$ independent of $F_{\infty}$ (Dickman distributed random
variables satisfy \eqref{eq:16} with $U$ a uniform random variable on
$[0, 1]$). Applying \eqref{eq:16} to $f(\cdot) = e^{i t \cdot}$ leads,
again by duality, to the differential equation on the CF
\begin{equation*}
\frac{\phi'_{\infty}(t)}{\phi_{\infty}(t)}= - i\phi_U(t) 
\end{equation*}
from which we read the operator 
\begin{equation*}
  L_{\infty}\phi(t) = \phi'(t) + i \phi_U(t) \phi(t). 
\end{equation*}
General conditions on $F_{\infty}$ under which biasing identities
\eqref{eq:16} hold are well known, and examples other than the Dickman
include infinitely divisible distributions which can be size biased,
compound Poisson random variables and many more (see
\cite{arratia2013size}).  In principle a version of our ST method can
be easily set up for targets in any of these families.

Identities \eqref{eq:4} and \eqref{eq:16} are but two examples of
so-called \emph{Stein identities} which have long been known to be
available for virtually any target distribution $F_{\infty}$ whose
density satisfies some form of differential/difference equation. A
general method for deriving Stein identities relies on the
construction of differential/difference operators $f \mapsto \mathcal{A}_{\infty}f$
such that 
\begin{equation*}
  \mathbb{E} \left[ \mathcal{A}_{\infty}f(F_{\infty}) \right] = 0
  \mbox{ for all } f \in \mathcal{F}(F_{\infty})
\end{equation*} 
with $\mathcal{F}(F_{\infty})$ some class of functions (see
\cite{LRS16}).  Clearly all targets which admit an operator of the
form $\mathcal{A}_{\infty}f(x) = \tau_1(x) f'(x) + \tau_2(x) f(x)$
such that $\tau_j, j=1, 2$ are polynomials of degree 1 (this includes,
see e.g.\ \cite{Stein1986}, many of the Pearson distributions) will
have an operator $L_{\infty}$ with agreeable structure.  More
generally any target which admits a Stein operator of the form
\begin{equation}
  \label{eq:14}
 \mathcal{A}_{\infty}f(x)  =  \sum_{j=0}^d (\alpha_j x + \beta_j) f^{(d)}(x)
\end{equation}
with $\alpha_j, \beta_j$ real numbers shall be a potentially good
candidate for our Stein-Tikhomirov approach. Indeed if
$\mathbb{E}\left[ \mathcal{A}_{\infty}f(F_{\infty}) \right]=0$ for
$\mathcal{A}_{\infty}$ as in \eqref{eq:14} then, working by duality in
Fourier space via test functions of the form
$f(\cdot) = e^{it \cdot}$, the fact that the coefficients of the
derivatives are polynomials guarantees that the corresponding CF will
be of the form
\begin{equation}
  \label{eq:15}
  \frac{\phi_{\infty}'(t)}{\phi_{\infty}(t)} = \frac{\sigma_B(t)}{\sigma_A(t)}
\end{equation}
with $\sigma_A(t) = \prod_{j=1}^{d_A}(t-\lambda^A_j)$ and
$\sigma_B(t) = \prod_{j=1}^{d_B}(t-\lambda^B_j)$ polynomials. We will
illustrate the power of this approach in Section
\ref{sec:bounds-targ-second} where we will apply the method to linear
combinations of independent centered chi-squared random variables,
leading to rates of convergence towards targets belonging to the
second Wiener chaos.

\subsection{A general  Stein-Tikhomirov method and generalized Dawson functions}
\label{sec:gener-appr-stein}

Similarly as for Stein's method whose robustness towards a change of
target has proven to be one of the strongpoints, there exist a great
variety of targets $F_{\infty}$ which admit a tractable operator
$L_{\infty}$. Building on this fact, the method we propose to adopt
can be broken down into the three following steps:
\begin{itemize}
\item Step 1: compute the operator
  $L_{\infty}\phi = L_{\infty}(\phi - \phi_{\infty})$ defined in
  \eqref{eq:13} and use properties of $\phi_{\infty}$ and of the
  approximating sequence $F_n$ to obtain bounds of the form
  \begin{equation}
    \label{eq:8}
\left| L_{\infty}(\phi_n -  \phi_{\infty})(t)   \right| \le  \Delta(F_n,
F_{\infty}) \psi(t)
  \end{equation}
  for $\Delta(F_n, F_{\infty})$ some relevant \emph{and} computable
  quantity and $\psi(\cdot)$ an explicit function.
\item  Step 2: plug \eqref{eq:8} into \eqref{eq:9} to write 
  \begin{equation*}
    \left| \phi_n(\xi) - \phi_{\infty}(\xi) \right| \le \Delta(F_n, F_{\infty}) \left|
      \phi_{\infty}(\xi) \right| \int_0^{\xi} \frac{1}{\left| \phi_{\infty}(t)
      \right|}  \psi(t) dt  =: D_{\infty}(\psi)(\xi)
  \end{equation*}
  and study properties of $ D_{\infty}(\psi)(\xi)$.
\item Step 3: use information from Steps 1 and 2 in combination with
  Esseen's inequality \eqref{eq:1} or Theorems
  \ref{thm:bounds-char-funct-1} and \ref{theor:bounds-char-funct-1} of
  Section \ref{sec:bounds-char-funct} to transfer the knowledge on the
  proximity of the CFs into knowledge on proximity of the
  distributions either in Kolmogorov distance or in an appropriate
  smooth Wasserstein.
\end{itemize}
As we shall see, in all our examples the function $\psi(t)$ from Step
1 is actually an increasing polynomial in its argument, so that
\begin{align*}
   \left|
      \phi_{\infty}(\xi) \right| \int_0^{\xi} \frac{1}{\left| \phi_{\infty}(t)
      \right|}  \psi(t) dt \le  \psi(\xi) \left|
      \phi_{\infty}(\xi) \right| \int_0^{\xi} \frac{1}{\left| \phi_{\infty}(t)
      \right|} dt =: \psi(\xi) D_{\infty}(\xi).
\end{align*}
By analogy with the Gaussian case we call the function $D_{\infty}$ a
(generalized) Dawson function for $F_{\infty}$. Its properties shall
be crucial for the quality of the final bounds. For instance if
$F_{\infty}$ is gamma distributed then the corresponding function is
proportional to $\xi$, and if $F_{\infty}$ is symmetric
$\alpha$-stable then $\phi_{\infty}(\xi) = e^{-|\xi|^{\alpha}}$ and
$D_{\infty}$ is easy to compute explicitly. Numerical illustrations
are provided in Figure~\ref{fig:dawson}.
\begin{figure}[!htb]
\minipage{0.32\textwidth}
  \includegraphics[width=\linewidth]{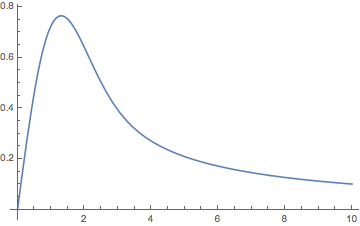}
\endminipage\hfill
\minipage{0.32\textwidth}
  \includegraphics[width=\linewidth]{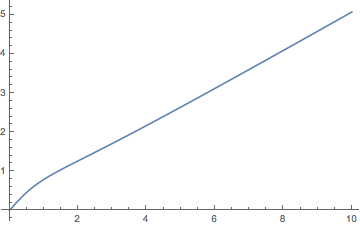}
\endminipage\hfill
\minipage{0.32\textwidth}%
  \includegraphics[width=\linewidth]{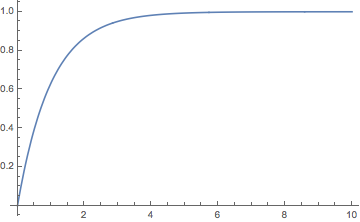}
\endminipage
\caption{ The function $D_{\infty}$ for $F_{\infty}$ (from left to
  right): standard Gaussian, Gamma$(1, 1)$ and L\'evy (symmetric
  $\alpha-$stable with $\alpha=1$) distributed.
}\label{fig:dawson}
\end{figure}
\vspace{+2cm}

\section{Bounds for targets in second Wiener chaos}
\label{sec:bounds-targ-second}
\subsection{Notations}
\noindent
Before stating the main results of this section, we introduce some notations and definitions from the Malliavin calculus. For any further details, we refer the reader to the books \cite{B98} and \cite{Nua}. Let $\{X(h)\}$ be an isonormal Gaussian process over a Hilbert space $\mathcal{H}$. Let $C^{\infty}_b(\mathbb{R}^n)$ be the space of infinitely differentiable and bounded functions on $\mathbb{R}^n$. We define the derivative operator by the following formula on regular random variables, $F=f(X(h_1),...,X(h_d))$:
\begin{align*}
\forall f\in C^{\infty}_b(\mathbb{R}^d),\ D(F)=\sum_{j=1}^d \frac{\partial}{\partial x_j}(f)(X(h_1),...,X(h_d))h_j.
\end{align*}
This operator is unbounded, closable and can be extended to the space $\mathbb{D}^{k,p}$ for $k\geq 1, p\geq 1$ (see e.g. section $1.2$ in \cite{Nua}). We denote by $\mathbb{D}^{\infty}=\bigcap_{k,p} \mathbb{D}^{k,p}$. Any square integrable random variable $F$ measurable with respect to the sigma algebra generated by $\{X(h)\}$ admits a Wiener-chaos expansion:
\begin{align*}
F=\sum_{n=0}^{+\infty}I_n(f_n),
\end{align*}
where the equality stands in $L^2(\Omega,\mathcal{F}(X),\mathbb{P})$ and with $f_n\in \mathcal{H}^{\hat{\otimes} n}$ and $I_n$ is the linear isometry between $\mathcal{H}^{\hat{\otimes} n}$ and the $n$-th Wiener chaos of $\{X(h)\}$. We have in particular the following isometry relation:
\begin{align*}
\operatorname{Var}\big(F\big)=\sum_{n=1}^{\infty}n! \mid\mid f_n\mid\mid^2_{ \mathcal{H}^{\hat{\otimes} n}}.
\end{align*}
Moreover, it is well known that $F$ belongs to $\mathbb{D}^{1,2}$ if and only if:
\begin{align*}
 \sum_{n=0}^\infty n n!  \mid\mid f_n\mid\mid^2_{ \mathcal{H}^{\hat{\otimes} n}}<+\infty
\end{align*}
We denote by $L$ the infinite dimensional Ornstein-Uhlenbeck operator whose domain is the space of random variables for which:
\begin{align*}
\sum_{n=0}^{+\infty}nI_n(f_n),
\end{align*}
is convergent in $L^2(\Omega,\mathcal{F}(X),\mathbb{P})$. We have:
\begin{align*}
\forall F\in\operatorname{Dom}(L),\ L(F)=-\sum_{n=0}^{\infty}nI_n(f_n).
\end{align*}
The pseudo-inverse of $L$ denoted by $L^{-1}$ acts as follows on centered random variable in $L^2(\Omega,\mathcal{F}(X),\mathbb{P})$:
\begin{align*}
L^{-1}(F)=\sum_{n=1}^{+\infty} -\frac{1}{n}I_n(f_n).
\end{align*}
To conclude, we introduce the iterated $\Gamma_j$ operators firstly defined in \cite{NourdinPeccati}.

\begin{defn}\label{Gamma}
For any $F\in \mathbb{D}^{\infty}$ and for any $j\geq 1$, we define $\Gamma_j(F)$ by the following recursive relation:
\begin{align*}
&\Gamma_0(F)=F,\\
&\Gamma_j(F)=\langle D(F),-DL^{-1}\Gamma_{j-1}(F)\rangle_{\mathcal{H}}.
\end{align*}
Note in particular that $\Gamma(F)=\Gamma_1(F):=\langle D(F),-DL^{-1}(F)\rangle_{\mathcal{H}}$. Moreover, we have:
\begin{align*}
\forall j\geq 1,\ \mathbb{E}[\Gamma_j(F)]=\frac{1}{j !} \kappa_{j+1}(F),
\end{align*}
where $\kappa_{j+1}(F)$ is the $j+1$ cumulants of $F$.
\end{defn}
\noindent
In the sequel, we consider a sequence $(F_n)$ lying in a finite sum of Wiener chaoses ($p$ Wiener chaoses). It is well known (\cite{Nua}) that such random variables are in $\mathbb{D}^{\infty}$. We emphasize that the existence of a finite integer $p$ such that the sequence lies in the sum of the $p$ first Wiener chaoses is for simplicity only. It is enough for our purpose and it prevents us from imposing conditions for carrying out interchange of derivative, integration and integration-by-parts. We denote its characteristic function by $\phi_{F_n}$.

\subsection{Results}
This section is devoted to limit theorems where the target lies in the
second Wiener chaos, i.e.\ we consider limiting distributions of the
following form:
\begin{equation}\label{Flimit}
  F = \sum_{i=1}^{m_1}\lambda_1(Z_i^2-1)+\sum_{i=m_1+1}^{m_1+m_2}\lambda_2(Z_i^2-1)+...+\sum_{i=m_1+\ldots+m_{d-1}+1}^q\lambda_d(Z_i^2-1),
\end{equation}
with $d \geq 1$, $q\geq 1$, $(m_1,..., m_d) \in \mathbb{N}^d$ such
that $m_1+...+m_d=q$, $(\lambda_1,..., \lambda_d) \in \mathbb{R}^*$
pairwise distinct and $(Z_i)_{i\geq 1}$ an i.i.d. sequence of standard
normal random variables. We denote its characteristic function by $\phi_\infty$. To start with we recall the following corollary from \cite{AAPS}:

\begin{corollary}\label{ba1}
Let $F$ be as in (\ref{Flimit}). Let $Y$ be a real valued random variable such that $\mathbb{E}[|Y|]<+\infty$. Then $Y \stackrel{\text{law}}{=} F$ if and only if, for all $\phi\in S(\mathbb{R})$, the space of rapidly decreasing infinitely differentiable functions:
\begin{align}
&\mathbb{E} \bigg[ \big(Y+\sum_{i=1}^d\lambda_im_i\big)(-1)^d2^d\bigg(\prod_{j=1}^d\lambda_j\bigg)\phi^{(d)}(Y)+\sum_{l=1}^{d-1}2^l(-1)^l\bigg(Ye_{l}(\lambda_1,...,\lambda_d)\nonumber\\ 
&+\sum_{k=1}^d\lambda_km_k\left(e_{l}(\lambda_1,...,\lambda_d)-e_{l}((\underline{\lambda}_k)\right)\bigg)\phi^{(l)}(Y)+Y\phi(Y) \bigg]=0,\label{eq:3}
\end{align}
with $e_{l}(\lambda_1,...,\lambda_d)$ is the elementary symmetric polynomials of degree $l$ in the variable $\lambda_1,...,\lambda_d$ and $\underline{\lambda}_k$ is the $k-1$-tuple $(\lambda_1,...,\lambda_{k-1},\lambda_{k+1},...,\lambda_d)$.
\end{corollary}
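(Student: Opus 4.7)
The plan is to establish the two implications of this ``if and only if'' characterization separately. The forward direction (if $Y\stackrel{\text{law}}{=}F$ then \eqref{eq:3} holds) is a Stein-type identity for the second Wiener chaos which I would extract from \cite{AAPS}; the converse (if \eqref{eq:3} holds for all $\phi\in S(\R)$ then $Y\stackrel{\text{law}}{=}F$) is a Fourier/ODE uniqueness argument.

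For the forward direction, one exploits the fact that $F$ lies in the second Wiener chaos so that each $\Gamma_j(F)$ of Definition \ref{Gamma} is a polynomial in $F$ whose coefficients are controlled by the power sums $\sum_i\lambda_i^j m_i$. Starting from the Malliavin integration-by-parts identity $\E[F\phi(F)]=\E[\Gamma(F)\phi'(F)]$ and iterating, the ``spectrum'' $(\lambda_1,\ldots,\lambda_d)$ produces, via its minimal polynomial $\prod_j(X-2\lambda_j)$, a recursion that terminates in exactly $d$ steps. The elementary symmetric polynomials $e_l(\lambda_1,\ldots,\lambda_d)$ appearing in \eqref{eq:3} emerge as the expansion coefficients of this minimal polynomial, while the shift by $e_l(\underline{\lambda}_k)$ keeps track of the correction when each derivative of $\phi$ is peeled off by a successive integration by parts.

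For the converse, I would substitute $\phi(x)=e^{itx}$ in \eqref{eq:3}. Since $e^{itx}\notin S(\R)$, I would first approximate by $\phi_n(x)=\chi(x/n)e^{itx}$ with $\chi\in C_c^\infty(\R)$ identically $1$ near the origin, and pass to the limit at each derivative order using the assumption $\E[|Y|]<+\infty$ and dominated convergence (the cutoff keeps all derivatives uniformly bounded in $x$). Writing $\psi(t)=\E[e^{itY}]$ and using $\E[\phi^{(l)}(Y)]=(it)^l\psi(t)$ together with $\E[Y\phi^{(l)}(Y)]=-i(it)^l\psi'(t)$, the identity \eqref{eq:3} collapses, after grouping by $\psi$ and $\psi'$, into the first-order linear ODE
\begin{equation*}
P(t)\,\psi'(t) \;+\; 2t\left(\sum_{k=1}^d\lambda_k^2 m_k\,P_k(t)\right)\psi(t) \;=\; 0, \qquad \psi(0)=1,
\end{equation*}
where $P(t)=\prod_{j=1}^d(1-2i\lambda_j t)$ and $P_k(t)=\prod_{j\neq k}(1-2i\lambda_j t)$. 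The combinatorial key that collapses the nested sums over $l$ and $k$ is the identity $P(t)-P_k(t)=-2i\lambda_k t\,P_k(t)$, combined with the generating function $P(t)=\sum_{l=0}^d(-2it)^l e_l(\lambda)$. A direct logarithmic differentiation of $\phi_\infty(t)=\prod_j(1-2i\lambda_j t)^{-m_j/2}\exp(-i\lambda_j m_j t)$ shows that $\phi_\infty$ satisfies the same ODE with the same initial condition, and since $P$ has no real zeros (its roots are purely imaginary) the ODE admits a unique smooth solution on all of $\R$; hence $\psi\equiv\phi_\infty$ and $Y\stackrel{\text{law}}{=}F$ by L\'evy's continuity theorem.

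I expect the genuine obstacle to lie in the forward direction: the Malliavin-calculus bookkeeping that recombines iterated $\Gamma_j$ operators into the elementary-symmetric-polynomial form of \eqref{eq:3} is intricate, and is essentially the substance of \cite{AAPS}. The converse direction is, by contrast, fairly clean, with only the approximation of complex exponentials by Schwarz functions and the algebraic telescoping via $P(t)-P_k(t)$ requiring attention. As a sanity check, once the ODE has been exhibited via the converse, one can reverse-engineer the coefficients of \eqref{eq:3} by matching polynomials in $t$, which independently confirms the form of the identity stated in the corollary.
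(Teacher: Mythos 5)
Your proposal is correct, and your converse argument is precisely the mechanism the paper itself relies on: the statement is quoted from \cite{AAPS} without proof, but the symbol computation given immediately after the corollary (the identity $\phi_\infty'/\phi_\infty=(-i\langle m,\lambda\rangle\sigma_{\mathcal A}+i\sigma_{\mathcal B})/\sigma_{\mathcal A}$ and Proposition \ref{formulaSC}) is exactly your first-order ODE $P(t)\psi'(t)+2t\bigl(\sum_k\lambda_k^2m_kP_k(t)\bigr)\psi(t)=0$ after clearing the normalization $\sigma_{\mathcal A}(t)=P(t)/(2^d\prod_j\lambda_j)$; your telescoping identities $P-P_k=-2i\lambda_ktP_k$ and $P(t)=\sum_l(-2it)^le_l(\lambda)$ check out, as does the truncation argument for replacing $e^{itx}$ by Schwartz functions under $\E|Y|<\infty$.

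The one place where your route differs from the economical one is your assessment that the forward direction is the ``genuine obstacle'' requiring the Malliavin-calculus recombination of iterated $\Gamma_j$ operators. It does not: once you have established (as you do for the converse) that for any integrable $Y$ with characteristic function $\psi$ one has
\begin{equation*}
\E\bigl[\mathcal{A}_\infty(e^{it\cdot})(Y)\bigr]
\;=\;-i\Bigl[P(t)\psi'(t)+2t\Bigl(\textstyle\sum_{k=1}^d\lambda_k^2m_kP_k(t)\Bigr)\psi(t)\Bigr]
\;=\;-i\,\sigma_{\mathcal A}(t)\,2^d\Bigl(\textstyle\prod_j\lambda_j\Bigr)\phi_\infty(t)\bigl(\psi/\phi_\infty\bigr)'(t),
\end{equation*}
both implications follow simultaneously: the right-hand side vanishes for all $t$ if and only if $\psi/\phi_\infty$ is constant, i.e.\ $\psi=\phi_\infty$, and a density/truncation argument extends the vanishing from complex exponentials to all of $S(\R)$. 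The representation \eqref{eq:repMal} in terms of $\Gamma_j(F_n)$ is needed only later, to make $\E[\mathcal{A}_\infty(e^{i\xi\cdot})(F_n)]$ quantitatively small for the approximating sequence; it plays no role in the characterization itself. So your proof is complete as written, but the Malliavin machinery you flag as the main difficulty can be bypassed entirely.
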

\noindent
Let us denote by $\mathcal{A}_{\infty}$ the differential operator appearing in the left-hand side of (\ref{eq:3}). By the proof of Theorem 2.1 of \cite{AAPS}, we can rewrite it in the following form:
\begin{align*}
\forall\phi\in S\big(\mathbb{R}\big),\ \mathcal{A}_{\infty}(\phi)=(x+<m,\lambda>)\mathcal{A}_{d,\lambda}(\phi)(x)-\mathcal{B}_{d,m,\lambda}(\phi)(x),
\end{align*}
with,
\begin{align*}
&\mathcal{A}_{d,\lambda}(\phi)(x)=\frac{1}{2\pi}\int_{\mathbb{R}}\mathcal{F}(\phi)(\xi)\bigg(\prod_{k=1}^d(\frac{1}{2\lambda_k}-i\xi)\bigg)\exp(ix\xi)d\xi,\\
&\mathcal{B}_{d,m,\lambda}(\phi)(x)=\frac{1}{2\pi}\int_{\mathbb{R}}\mathcal{F}(\phi)(\xi)\bigg(\sum_{k=1}^d\frac{m_k}{2}\prod_{l=1,l\ne k}^d(\frac{1}{2\lambda_l}-i\xi)\bigg)\exp(ix\xi)d\xi,\\
&\mathcal{F}(\phi)(\xi)=\int_{\mathbb{R}}\phi(x)\exp(-ix\xi)dx.
\end{align*}
\noindent
We denote by $\sigma_{\mathcal{A}}$ and $\sigma_{\mathcal{B}}$ the
symbols associated with $\mathcal{A}_{d,\lambda}$ and
$\mathcal{B}_{d,m,\lambda}$ respectively, and defined by 
\begin{align*}
&   \sigma_{\mathcal{A}}(\xi) =
  \prod_{k=1}^d(\frac{1}{2\lambda_k}-i\xi) \\
  & \sigma_{\mathcal{B}}(\xi) =
    \sum_{k=1}^d\frac{m_k}{2}\prod_{l=1,l\ne
    k}^d(\frac{1}{2\lambda_l}-i\xi). 
\end{align*}
These symbols are infinitely differentiable and non-zero everywhere on
$\mathbb{R}$. Moreover, still by the proof of Theorem $2.1$ of
\cite{AAPS}, we have
\begin{align*}
  \frac{\phi_\infty'(\xi)}{\phi_\infty(\xi)}=\frac{-i<m,\lambda>\sigma_{\mathcal{A}}(\xi)+i\sigma_{\mathcal{B}}(\xi)
  }{\sigma_{\mathcal{A}}(\xi)} 
\end{align*}
(as was anticipated in Section \ref{sec:operator-l_infty}).  This
leads to the following result.


\begin{proposition}\label{formulaSC}
For any $\xi$ in $\mathbb{R}$, we have:
\begin{align}
\phi_{F_n}(\xi)-\phi_\infty(\xi)=i\phi_\infty(\xi)\int_0^{\xi}\frac{1}{\phi_\infty(s)}
  \frac{\mathbb{E}\big[\mathcal{A}_{\infty}(\exp(is.))(F_n)\big]}{\sigma_{\mathcal{A}}(s)}ds. \label{repdiffchar}
\end{align}
\end{proposition}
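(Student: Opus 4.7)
The plan is to identify this statement as a direct instantiation of the general Stein--Tikhomirov identity \eqref{eq:9} from Section~\ref{sec:general-approach-st}, the only real work being to rewrite $L_\infty \phi_{F_n}$ in the form $\mathbb{E}[\mathcal{A}_\infty(e^{i\xi\cdot})(F_n)]/\sigma_{\mathcal{A}}(\xi)$. Starting from the definition
$$L_{\infty}\phi_{F_n}(\xi) = \phi_{F_n}'(\xi) - \frac{\phi_\infty'(\xi)}{\phi_\infty(\xi)}\phi_{F_n}(\xi),$$
the elementary observation $(\phi_{F_n}/\phi_\infty)'= L_\infty\phi_{F_n}/\phi_\infty$ together with $\phi_{F_n}(0)=\phi_\infty(0)=1$ gives, after integrating between $0$ and $\xi$ and multiplying by $\phi_\infty(\xi)$,
$$\phi_{F_n}(\xi) - \phi_\infty(\xi) = \phi_\infty(\xi)\int_0^{\xi}\frac{L_\infty\phi_{F_n}(s)}{\phi_\infty(s)}ds.$$
(The division by $\phi_\infty$ is legitimate because $F$, being a linear combination of independent centred $\chi^2$s, is infinitely divisible and hence has a characteristic function that never vanishes.) It therefore remains to verify $L_\infty\phi_{F_n}(\xi) = i\,\mathbb{E}[\mathcal{A}_\infty(e^{is\cdot})(F_n)]\big|_{s=\xi}/\sigma_{\mathcal{A}}(\xi)$.

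For this, I would exploit the fact that the operators $\mathcal{A}_{d,\lambda}$ and $\mathcal{B}_{d,m,\lambda}$ are Fourier multipliers by the polynomials $\sigma_{\mathcal{A}}$ and $\sigma_{\mathcal{B}}$. Equivalently, since these symbols are polynomials in $-i\xi$, they correspond to linear differential operators with constant coefficients, and so their action on the plane wave $x\mapsto e^{i\xi x}$ is simply multiplication by the symbol evaluated at $\xi$:
$$\mathcal{A}_{d,\lambda}(e^{i\xi \cdot})(x)=\sigma_{\mathcal{A}}(\xi) e^{i\xi x},\qquad \mathcal{B}_{d,m,\lambda}(e^{i\xi \cdot})(x)=\sigma_{\mathcal{B}}(\xi) e^{i\xi x}.$$
Using the factored form of $\mathcal{A}_\infty$ then yields
$$\mathcal{A}_{\infty}(e^{i\xi\cdot})(x) = (x+\langle m,\lambda\rangle)\sigma_{\mathcal{A}}(\xi)e^{i\xi x} - \sigma_{\mathcal{B}}(\xi)e^{i\xi x}.$$
Taking expectation at $x=F_n$, with $\mathbb{E}[F_n e^{i\xi F_n}]=-i\phi_{F_n}'(\xi)$, dividing through by $\sigma_{\mathcal{A}}(\xi)$ and substituting the displayed formula
$$\frac{\phi_\infty'(\xi)}{\phi_\infty(\xi)} = i\Bigl(-\langle m,\lambda\rangle + \tfrac{\sigma_{\mathcal{B}}(\xi)}{\sigma_{\mathcal{A}}(\xi)}\Bigr)$$
yields after rearrangement
$$\frac{\mathbb{E}[\mathcal{A}_\infty(e^{i\xi\cdot})(F_n)]}{\sigma_{\mathcal{A}}(\xi)} = -i\phi_{F_n}'(\xi) + i\,\frac{\phi_\infty'(\xi)}{\phi_\infty(\xi)}\,\phi_{F_n}(\xi) = -i\,L_\infty\phi_{F_n}(\xi),$$
which is exactly the required identification. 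Plugging this into the integrated equation gives \eqref{repdiffchar}.

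The only mild obstacle is justifying the plane-wave computation: strictly speaking $e^{i\xi\cdot}\notin S(\mathbb{R})$, so the formulas for $\mathcal{A}_{d,\lambda}(e^{i\xi\cdot})$ need a tempered-distributional interpretation, or equivalently one rewrites the two Fourier multipliers as constant-coefficient differential operators (in $x$) by expanding $\sigma_{\mathcal{A}}$ and $\sigma_{\mathcal{B}}$. Interchanging the resulting $x$-derivatives with $\mathbb{E}[\,\cdot\,(F_n)]$ is permissible because $F_n$ lies in finitely many Wiener chaoses, hence has moments of all orders, and then $\phi_{F_n}$ is smooth with derivatives given by the expectations of polynomial-times-exponential functions of $F_n$. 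Everything else is bookkeeping with the symbols.
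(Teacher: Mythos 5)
Your proposal is correct and takes essentially the same route as the paper: the paper computes $\mathbb{E}\big[\mathcal{A}_{\infty}(\phi)(F_n)\big]$ via Fourier duality brackets for general $\phi$ and then specializes to $\phi=e^{i\xi\cdot}$ (whose Fourier transform is $2\pi$ times the Dirac mass at $\xi$), which is exactly your plane-wave/multiplier evaluation, and both arrive at the same key identity $\mathbb{E}\big[\mathcal{A}_{\infty}(e^{i\xi\cdot})(F_n)\big]=-i\,\sigma_{\mathcal{A}}(\xi)\,\phi_\infty(\xi)\bigl(\tfrac{\phi_{F_n}-\phi_\infty}{\phi_\infty}\bigr)'(\xi)$ before integrating. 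Your write-up merely makes explicit the final integration step and the non-vanishing of $\phi_\infty$ and $\sigma_{\mathcal{A}}$, which the paper leaves implicit.
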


\begin{proof}
Let $\phi$ be a smooth enough function which will be specified later on. We are interested in the quantity $\mathbb{E}\big[\mathcal{A}_{\infty}(\phi)(F_n)\big]$:
\begin{align*}
 \mathbb{E}\big[\mathcal{A}_{\infty}(\phi)(F_n)\big]=&\mathbb{E}\big[F_n\mathcal{A}_{d,\lambda}(\phi)(F_n)\big]+<m,\lambda>\mathbb{E}\big[\mathcal{A}_{d,\lambda}(\phi)(F_n)\big]-\mathbb{E}\big[\mathcal{B}_{d,m,\lambda}(\phi)(F_n)\big]\\
=&\frac{1}{2\pi}\bigg[<\mathcal{F}\big(x\mathcal{A}_{d,\lambda}(\phi)\big);\phi_{F_n}>+<m,\lambda><\mathcal{F}\big(\mathcal{A}_{d,\lambda}(\phi)\big);\phi_{F_n}>\\
&-<\mathcal{F}\big(\mathcal{B}_{d,m,\lambda}(\phi)\big);\phi_{F_n}>\bigg],
\end{align*}
Note that the previous brackets can be understood as duality brackets as soon as $F_n$ as enough moments and $\mathcal{F}\big(\mathcal{A}_{d,\lambda}(\phi)\big)$ and $\mathcal{F}\big(\mathcal{B}_{d,m,\lambda}(\phi)\big)$ are distributions with compact support with orders less or equal to the number of moments of $F_n$.  We have:
\begin{align*}
\mathbb{E}\big[\mathcal{A}_{\infty}(\phi)(F_n)\big]=&\frac{1}{2\pi}\bigg[-i<\mathcal{F}(\phi);\sigma_{\mathcal{A}}(.)(\phi_{F_n})'>+<m,\lambda><\mathcal{F}(\phi);\sigma_{\mathcal{A}}(.)\phi_{F_n}>\\
&-<\mathcal{F}(\phi);\sigma_{\mathcal{B}}(.)\phi_{F_n}>\bigg].
\end{align*}
Thus, we have:
\begin{align*}
\mathbb{E}\big[\mathcal{A}_{\infty}(\phi)(F_n)\big]&=\frac{-i}{2\pi}\bigg[<\mathcal{F}(\phi);\sigma_{\mathcal{A}}(.)(\phi_{F_n})'>-<\mathcal{F}(\phi);\sigma_{\mathcal{A}}(.)\frac{(\phi_\infty)'}{\phi_\infty}\phi_{F_n}>\bigg],\\
&=\frac{-i}{2\pi}<\mathcal{F}(\phi);\sigma_{\mathcal{A}}(.)\phi\big(\frac{\phi_{F_n}}{\phi_\infty}\big)'>.
\end{align*}
Now choose $\phi$ equal to $\exp(i\xi.)$. Thus, its Fourier transform is exactly $2\pi$ times the Dirac distribution at $\xi$. We obtain:
\begin{align*}
\mathbb{E}\big[\mathcal{A}_{\infty}(\exp(i\xi.))(F_n)\big]&=(-i)\sigma_{\mathcal{A}}(\xi)\phi_\infty(\xi)\big(\frac{\phi_{F_n}(\xi)}{\phi_\infty(\xi)}\big)',\\
&=(-i)\sigma_{\mathcal{A}}(\xi)\phi_\infty(\xi)\big(\frac{\phi_{F_n}(\xi)-\phi_\infty(\xi)}{\phi_\infty(\xi)}\big)'.
\end{align*}
This ends the proof of the proposition.
\end{proof}
\noindent
In order to use identity \eqref{repdiffchar} for our Stein-Tikhomirov
method for the second Wiener chaos, it still remains to show that the
integrand has good properties. To this end we rely on the papers
\cite{AAPS} (page $12$ equation $2.9$) and \cite{a-p-p} (Proposition
$3.2$ page $13$), where it is proved that
$\mathbb{E}\big[\mathcal{A}_{\infty}(\exp(i\xi.))(F_n)\big]$ admits
the following suitable representation in terms of iterated gamma
operators. Regarding notations and definitions, we refer to \cite{AAPS}, (at the beginning
of Section $2.3$ page $10$).

\begin{equation}\label{eq:repMal}
\begin{split}
\E \left[ \mathcal{A}_\infty \big(\phi\big) (F_n) \right] & = - \E \Bigg[  \phi^{(d)}(F_n) \times \Big( \sum_{r=1}^{d+1} a_r \left[ \Gamma_{r-1}(F_n) - \E[\Gamma_{r-1}(F_n)] \right] \Big) \Bigg] \\
& \hskip 1cm +  \sum_{r=2}^{d+1} a_r \sum_{l=0}^{r-2}  \left\{ \E [
  \phi^{(d-l)}(F_n) ] \times \Big(  \E \left[ \Gamma_{r-l-1}(F)
  \right] - \E \left[ \Gamma_{r-l-1}(F_n) \right] \Big) \right\}\\ 
& = - \E \Bigg[  \phi^{(d)}(F_n) \times \Big( \sum_{r=1}^{d+1} a_r \left[
  \Gamma_{r-1}(F_n) - \E[\Gamma_{r-1}(F_n)] \right] \Big) \Bigg] \\ 
&\hskip 1cm +  \sum_{r=2}^{d+1} a_r \sum_{l=0}^{r-2} \frac{ \E [ \phi^{(d-l)}(F_n) ]}{(r-l-1)!} \times \Big( \kappa_{r-l}(F) - \kappa_{r-l}(F_n) \Big).
\end{split}
\end{equation}
Combining this expression and the formula (\ref{repdiffchar}) for the difference of the characteristic functions of $F_n$ and $F$, it holds that $F_n$ converges in distribution towards $F$ if the following conditions are satisfied:
\begin{align*}
\forall r=2,\ldots,d+1,\ \kappa_{r}(F_n)\rightarrow\kappa_{r}(F)\\
  \sum_{r=1}^{d+1} a_r [ \Gamma_{r-1}(F_n) - \E[\Gamma_{r-1}(F_n)]\rightarrow 0.
\end{align*}
Our goal is now to give a quantitative bound. Firstly note that:
\begin{align*}
\vert\E \left[ \mathcal{A}_\infty (e^{i\xi.}) (F_n) \right]\vert \leq &\vert\xi\vert^d\E \Bigg[\vert\sum_{r=1}^{d+1} a_r \left[
  \Gamma_{r-1}(F_n) - \E[\Gamma_{r-1}(F_n)] \right]\vert \Bigg]\\
  &+\sum_{r=2}^{d+1} \vert a_r\vert \sum_{l=0}^{r-2} \frac{ \vert\xi\vert^{d-l}}{(r-l-1)!} \times \vert \kappa_{r-l}(F) - \kappa_{r-l}(F_n) \vert\\
  &\leq (1+\vert\xi\vert)^d\Delta_n,
\end{align*}
with:
\begin{align*}
\Delta_n=\E \Bigg[\vert\sum_{r=1}^{d+1} a_r \left[
  \Gamma_{r-1}(F_n) - \E[\Gamma_{r-1}(F_n)] \right]\vert \Bigg]+\sum_{r=2}^{d+1} \vert a_r\vert \sum_{l=0}^{r-2} \frac{1}{(r-l-1)!} \times \vert \kappa_{r-l}(F) - \kappa_{r-l}(F_n) \vert.
\end{align*}
Thus, we obtain:
\begin{align}
\vert\phi_{F_n}(\xi)-\phi_\infty(\xi)\vert\leq \Delta_n \vert\phi_\infty(\xi)\vert\int_0^{\xi}\dfrac{(1+\vert s\vert)^d}{\vert\sigma_{\mathcal{A}}(s)\phi_\infty(s)\vert}ds,\label{bound1}
\end{align}
Before moving on, we need the following simple lemma.
\begin{lemma}\label{technic}
Let F be as above and $\phi_\infty$ its characteristic function. We have:
\begin{align*}
&\forall\xi\in\mathbb{R},\ \phi_\infty(\xi)=\exp(-i\xi<m,\lambda>)\prod_{k=1}^d\dfrac{1}{(1-2i\xi\lambda_k)^{\frac{m_k}{2}}},\\
&\forall\xi\in\mathbb{R},\ \dfrac{1}{(1+4\lambda_{max}^2\xi^2)^{\frac{q}{4}}}\leq\vert \phi_\infty(\xi) \vert \leq \dfrac{1}{(1+4\lambda_{min}^2\xi^2)^{\frac{q}{4}}},\\
&\forall\xi\in\mathbb{R},\  \frac{1}{\vert \sigma_{\mathcal{A}}(s)\vert} \leq \dfrac{2^d\prod_{j=1}^d\vert\lambda_{j}\vert}{(1+4\lambda_{min}^2\xi^2)^{\frac{d}{2}}}.
\end{align*}
\end{lemma}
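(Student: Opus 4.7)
The proof is essentially a direct computation in three steps, exploiting independence and the well-known form of the chi-squared characteristic function. No single step is really hard; the whole lemma is a collection of routine Fourier-analytic estimates. I would organize it as follows.

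\textbf{Step 1 (closed form for $\phi_\infty$).} I would use independence of the $Z_i$ together with the standard identity $\mathbb{E}[e^{i\xi Z^2}] = (1-2i\xi)^{-1/2}$ for $Z \sim \mathcal{N}(0,1)$. Each summand $\lambda_k(Z_i^2 - 1)$ therefore has characteristic function $e^{-i\lambda_k \xi}(1 - 2i\lambda_k \xi)^{-1/2}$. Multiplying over the $m_k$ independent copies in the $k$-th block and then over $k=1,\dots,d$, and recalling $\langle m, \lambda \rangle = \sum_k m_k \lambda_k$, gives
\[
\phi_\infty(\xi) = \exp(-i\xi\langle m, \lambda\rangle)\prod_{k=1}^d \frac{1}{(1-2i\lambda_k\xi)^{m_k/2}},
\]
which is the first claim.

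\textbf{Step 2 (pointwise bounds on $|\phi_\infty|$).} The exponential factor has modulus $1$, and $|1-2i\lambda_k\xi| = \sqrt{1+4\lambda_k^2\xi^2}$, so
\[
|\phi_\infty(\xi)| = \prod_{k=1}^d (1+4\lambda_k^2\xi^2)^{-m_k/4}.
\]
Since the map $\lambda \mapsto (1+4\lambda^2\xi^2)^{-1/4}$ is decreasing in $|\lambda|$, replacing each $|\lambda_k|$ by $\lambda_{\max}$ in the denominator yields a lower bound and replacing it by $\lambda_{\min}$ yields an upper bound; using $\sum_k m_k = q$ collapses the product into a single factor with exponent $q/4$, giving the claimed sandwich.

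\textbf{Step 3 (lower bound on $|\sigma_{\mathcal{A}}|$).} Compute directly
\[
\left| \frac{1}{2\lambda_k} - i\xi \right| = \sqrt{\frac{1}{4\lambda_k^2} + \xi^2} = \frac{1}{2|\lambda_k|}\sqrt{1 + 4\lambda_k^2\xi^2},
\]
so
\[
|\sigma_{\mathcal{A}}(\xi)| = \frac{1}{2^d \prod_{j=1}^d |\lambda_j|} \prod_{k=1}^d \sqrt{1+4\lambda_k^2\xi^2} \;\geq\; \frac{(1+4\lambda_{\min}^2\xi^2)^{d/2}}{2^d \prod_{j=1}^d |\lambda_j|},
\]
where again monotonicity of $\lambda \mapsto 1+4\lambda^2\xi^2$ in $|\lambda|$ is used. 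Taking reciprocals yields the third inequality.

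The only point one should be a little careful about is making sure the exponents $m_k$ and the number of factors $d$ match the statement (the second bound carries exponent $q/4$ because $\sum m_k = q$, while the third carries exponent $d/2$ because $\sigma_{\mathcal{A}}$ has exactly $d$ factors regardless of the multiplicities $m_k$). Apart from that bookkeeping, nothing in the proof is truly an obstacle.
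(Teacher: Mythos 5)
Your proof is correct, and it is precisely the ``standard'' computation the paper has in mind: the published version of Lemma \ref{technic} simply states that the proof is standard and leaves the details to the reader, so your three steps (chi-squared characteristic function plus independence, the modulus identity $|1-2i\lambda_k\xi|=\sqrt{1+4\lambda_k^2\xi^2}$ with $\sum_k m_k = q$, and the factorization of $|\sigma_{\mathcal{A}}|$ with $d$ factors) fill in exactly the intended argument. The bookkeeping remarks about the exponents $q/4$ versus $d/2$ and interpreting $\lambda_{\max},\lambda_{\min}$ via $|\lambda_k|$ are the right points to be careful about, and you have handled them correctly.
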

\begin{proof}
Based on the representation of $F$, the proof is standard. We leave the details to the interested reader.
\end{proof}
\noindent
Using these inequalities into (\ref{bound1}), we obtain:
\begin{align}
&\vert\phi_{F_n}(\xi)-\phi_\infty(\xi)\vert\leq \Delta_n \dfrac{2^d\prod_{j=1}^d\vert\lambda_{j}\vert}{(1+4\lambda_{min}^2\xi^2)^{\frac{q}{4}}}\int_0^{\xi}\dfrac{(1+\vert s\vert)^d (1+4\lambda_{max}^2s^2)^{\frac{q}{4}}}{(1+4\lambda_{min}^2s^2)^{\frac{d}{2}}}ds\nonumber,\\
&\vert\phi_{F_n}(\xi)-\phi_\infty(\xi)\vert\leq C_{d,\lambda}\Delta_n \int_0^{\xi}\dfrac{(1+\vert s\vert)^d}{(1+s^2)^{\frac{d}{2}}}ds\nonumber,\\
&\vert\phi_{F_n}(\xi)-\phi_\infty(\xi)\vert\leq C_{d,\lambda}\Delta_n \vert\xi\vert.\label{bound2}
\end{align}
Now, we use the bound (\ref{bound2}) to obtain a quantitative bound on smooth Wasserstein metric between $F_n$ and $F$. Moreover, using Ess\'een inequality (\ref{eq:1}), we obtain a quantitative bound in Kolmogorov distance thanks to (\ref{bound2}) for $q\geq 3$. This is the content of the next Theorem.

\begin{theorem}\label{bound-secondchaos}
Let $F_n$ be a sequence of random variables lying in the finite sum of the $p$ first Wiener chaoses and let 
\begin{equation*}
F = \sum_{i=1}^{m_1}\lambda_1(N_i^2-1)+\sum_{i=m_1+1}^{m_1+m_2}\lambda_2(N_i^2-1)+...+\sum_{i=m_1+\ldots+m_{d-1}+1}^q\lambda_d(N_i^2-1),
\end{equation*}
for parameters $(\lambda_i)_{1\le i \le d}$ and $m_1,\cdots,m_{d} \in \N^{d}$ as in Corollary \ref{ba1}. Then, there exist some constants $C>0$ and $\theta>0$ depending only on the target and $p$ such that
\begin{equation*}
\mathcal W_2( F_n,F) \le C \Delta_n \mid\log( \Delta_n)\mid^\theta,
\end{equation*}
with
\begin{align*}
\Delta_n&=\E \Bigg[\vert\sum_{r=1}^{d+1} a_r \left[
  \Gamma_{r-1}(F_n) - \E[\Gamma_{r-1}(F_n)] \right]\vert \Bigg]+\sum_{r=2}^{d+1} \vert a_r\vert \sum_{l=0}^{r-2} \frac{1}{(r-l-1)!}\\
  & \times \vert \kappa_{r-l}(F) - \kappa_{r-l}(F_n) \vert.
\end{align*} 
Moreover, for $q\geq 3$, there exists a strictly positive constant $B>0$ depending on the limiting distribution $F$ through $d,q$ and the $\lambda_i$, such that:
\begin{align*}
\operatorname{Kol}\big(F_n,F\big)\leq B\sqrt{\Delta_n}.
\end{align*}
\end{theorem}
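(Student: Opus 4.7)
The plan is to apply the transfer principle Theorem \ref{thm:bounds-char-funct-1} for the smooth Wasserstein part and Esseen's inequality \eqref{eq:1} for the Kolmogorov part, both fed by the key pointwise estimate \eqref{bound2}, namely
\begin{equation*}
|\phi_{F_n}(\xi) - \phi_\infty(\xi)| \le C_{d,\lambda}\, \Delta_n\, |\xi|.
\end{equation*}
This is precisely assumption (H1) of Theorem \ref{thm:bounds-char-funct-1} with polynomial degree $p=1$ and $\epsilon = C_{d,\lambda}\Delta_n$, so the output of that theorem is a bound in $\mathcal{W}_{1+1} = \mathcal{W}_2$, as desired.

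To activate Theorem \ref{thm:bounds-char-funct-1} it remains to verify the tail condition (H2). Since $F_n$ lies in a finite sum of the first $p$ Wiener chaoses, hypercontractivity of the Ornstein--Uhlenbeck semigroup (Nelson's inequality) gives $\|F_n\|_{L^q} \le (q-1)^{p/2}\|F_n\|_{L^2}$ for every $q\ge 2$. Uniform boundedness of $\|F_n\|_{L^2}$ follows from the convergence of second cumulants $\kappa_2(F_n)\to \kappa_2(F)$ which is built into $\Delta_n$. A standard optimization in $q$ via Markov's inequality then yields a subexponential tail
\begin{equation*}
\mathbb{P}(|F_n| > A) \le C\exp(-\lambda A^{2/p})
\end{equation*}
uniform in $n$, and the same bound holds for the limit $F$ (which itself lives in the second Wiener chaos). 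Feeding this into Theorem \ref{thm:bounds-char-funct-1} with $\alpha = 2/p$ gives $\mathcal{W}_2(F_n,F) \le C\,\Delta_n\,(1+|\log \Delta_n|)^{p/4}$, which is the announced bound with $\theta = p/4$.

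For the Kolmogorov estimate, the first task is to guarantee that the target $F$ has a bounded density. From Lemma \ref{technic}, $|\phi_\infty(\xi)| \le (1+4\lambda_{\min}^2\xi^2)^{-q/4}$, so $\phi_\infty \in L^1(\mathbb{R})$ precisely when $q\ge 3$; Fourier inversion then produces a continuous density bounded by $m = (2\pi)^{-1}\|\phi_\infty\|_{L^1}$. Combining Esseen's inequality \eqref{eq:1} with \eqref{bound2} gives, for every $T>0$,
\begin{equation*}
\operatorname{Kol}(F_n,F) \;\le\; \frac{1}{\pi}\int_{-T}^{T} C_{d,\lambda}\Delta_n\, dt + \frac{24 m}{\pi T} \;=\; \frac{2 C_{d,\lambda}\Delta_n\, T}{\pi} + \frac{24 m}{\pi T},
\end{equation*}
and optimizing in $T$ (with $T$ proportional to $\Delta_n^{-1/2}$) yields $\operatorname{Kol}(F_n,F)\le B\sqrt{\Delta_n}$ with $B$ depending only on $d$, $q$ and the $\lambda_i$.

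The only genuinely delicate step is the verification of the uniform tail bound (H2); once that is settled via hypercontractivity, the remainder is an almost mechanical combination of \eqref{bound2} with the two transfer tools. Note that the restriction $q\ge 3$ for the Kolmogorov bound is intrinsic to this route: it is exactly the threshold ensuring that $\phi_\infty$ is integrable and, correspondingly, that the density of $F$ is bounded so Esseen's inequality can be invoked.
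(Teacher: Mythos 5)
Your proposal is correct and follows exactly the route of the paper's own proof: the pointwise estimate \eqref{bound2} serves as hypothesis (H1) with $p=1$, Theorem \ref{thm:bounds-char-funct-1} yields the $\mathcal W_2$ bound, and Esseen's inequality \eqref{eq:1} with $T=\Delta_n^{-1/2}$ and the boundedness of the target density (from Lemma \ref{technic}, valid for $q\ge 3$) gives the Kolmogorov bound. The only difference is that you spell out the verification of the uniform sub-exponential tail (H2) via hypercontractivity, which the paper leaves implicit in the phrase ``the fact that $(F_n)$ lies in a finite sum of the $p$ first Wiener chaoses''.
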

\begin{proof}
The first part of the theorem is a direct consequence of (\ref{bound2}) and Theorem \ref{thm:bounds-char-funct-1} and the fact that the sequence $(F_n)$ lies in a finite sum of the $p$ first Wiener chaoses. Let $q\geq 3$ and $\lambda_{\operatorname{min}}\ne 0$. Thanks to the second point of Lemma \ref{technic}, the density of the limiting distribution is bounded on $\mathbb{R}$ so that we can apply Ess\'een inequality (\ref{eq:1}). For any $T>0$, we have (thanks to (\ref{bound2})):
\begin{align*}
\operatorname{Kol}\big(F_n,F\big)\leq B\big(\Delta_n T+\frac{1}{T}\big).
\end{align*}
Now, we set $T:=1/\sqrt{\Delta_n}$. This concludes the proof.
\end{proof}
\noindent
The quantitative bound obtained in Theorem \ref{bound-secondchaos} must be compared with analogous estimates for the convergence in distribution for sequences of random variable belonging to a fixed Wiener chaos towards the non-central gamma distribution of parameter $\nu>0$ as considered in \cite{NP09} and in \cite{NP09II} (see e.g. Theorems $1.5$ and $3.11$ of \cite{NP09II}). Then, we have the following Lemma.

\begin{lemma}\label{ncgamma}
Let $\nu>0$, $F=\nu(Z^2-1)$ and $F_n=I_r(f_n)$ be a sequence of random variables belonging to the $r$-th Wiener chaos. Then, we have:
\begin{align*}
\Delta_n&\leq \E \Bigg[ (\nu^2-\frac{1}{2}r!\mid\mid f_n\mid\mid^2_{\mathcal{H}^{\otimes r}})F_n^2-\frac{\nu}{2}F^3_n+\frac{1}{4r^2}\mid\mid D(F_n)\mid\mid^4_{\mathcal{H}}+\frac{(r!)^2}{4}\mid\mid f_n\mid\mid^4_{\mathcal{H}^{\otimes r}} \Bigg]^{\frac{1}{2}}\\
&+ \vert \nu^2-\frac{r!}{2}\mid\mid f_n\mid\mid_{\mathcal{H}^{\otimes r}}^2 \vert.
\end{align*}
If $r=2$, we have the moment bound:
\begin{align*}
\Delta_n\leq \E \Bigg[ (\nu^2-\frac{1}{2}\mathbb{E}\big[F_n^2\big])F_n^2+\big(4\nu^4-\frac{\nu}{2}F^3_n)+\big(-\frac{5}{2}v^4+\frac{1}{24}F_n^4\big)+\frac{3}{8}\big((\mathbb{E}\big[F_n^2\big])^2-4\nu^4\big) \Bigg]^{\frac{1}{2}}+ \vert \nu^2-\frac{1}{2}\mathbb{E}\big[F_n^2\big] \vert.
\end{align*}
\end{lemma}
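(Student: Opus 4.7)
The strategy is in three stages: specialize the general expression for $\Delta_n$ given just above Theorem~\ref{bound-secondchaos} to the present non-central gamma target, apply Jensen's inequality to reduce to a variance-type quantity, and finally evaluate the resulting second moment via elementary Malliavin calculus identities for chaos elements.

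For the specialization, the target $F=\nu(Z^2-1)$ is the case $d=1$, $m_1=1$, $\lambda_1=\nu$ of Corollary~\ref{ba1}. The sum in (\ref{eq:repMal}) then collapses to two coefficients $a_1$ and $a_2$, which are read off by matching the Stein operator of Corollary~\ref{ba1} with the right-hand side of (\ref{eq:repMal}); this identification uses once the Malliavin identity $\E[F_n\phi(F_n)]=\E[\phi'(F_n)\Gamma_1(F_n)]$ (valid because $F_n$ is centered). The specialized $\Delta_n$ becomes
\[
\Delta_n = \E\bigl|a_1 F_n + a_2(\Gamma_1(F_n)-\E[\Gamma_1(F_n)])\bigr| + |a_2|\,|\kappa_2(F)-\kappa_2(F_n)|,
\]
and the second summand already produces the term $|\nu^2-\frac{r!}{2}\|f_n\|^2_{\mathcal{H}^{\otimes r}}|$ appearing in the lemma. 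Applying Jensen's inequality $\E|Y|\le \sqrt{\E[Y^2]}$ to the first summand reduces the problem to computing the second moment of $Y := a_1 F_n + a_2(\Gamma_1(F_n)-\E[\Gamma_1(F_n)])$.

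The expansion of $\E[Y^2]$ relies on three standard facts about $F_n = I_r(f_n)$. First, $L^{-1}F_n = -F_n/r$ implies $\Gamma_1(F_n) = \frac{1}{r}\|DF_n\|^2_{\mathcal{H}}$, with $\E[\Gamma_1(F_n)] = \E[F_n^2] = r!\|f_n\|^2_{\mathcal{H}^{\otimes r}}$. Second, and crucially, the cross term is controlled via
\[
\E\bigl[F_n\Gamma_1(F_n)\bigr] = \E[\Gamma_2(F_n)] = \frac{1}{2}\kappa_3(F_n) = \frac{1}{2}\E[F_n^3],
\]
which combines the identity $\E[F_n\Gamma_1(F_n)] = \E[\Gamma_2(F_n)]$ (a direct consequence of the definition of $\Gamma_2$ and Malliavin integration by parts, since $F_n$ is centered) with the general relation $\E[\Gamma_j(F)] = \kappa_{j+1}(F)/j!$ stated in Definition~\ref{Gamma}. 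Substituting these identities into the expansion of the square collapses the six resulting terms into exactly the polynomial in $F_n^2$, $F_n^3$, $\|DF_n\|^2_{\mathcal{H}}$ and $\|f_n\|^2_{\mathcal{H}^{\otimes r}}$ that appears under the square root in the first part of the lemma.

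For the refinement when $r=2$, the remaining task is to rewrite $\E[\|DF_n\|^4_{\mathcal{H}}]$ in terms of moments of $F_n$ alone. Starting from $DF_n = 2\,I_1(f_n(\cdot))$ and the product formula $I_1(g)^2 = I_2(g\otimes g)+\|g\|^2$, one obtains $\|DF_n\|^2_{\mathcal{H}} = 4\,I_2(f_n\otimes_1 f_n)+4\|f_n\|^2_{\mathcal{H}^{\otimes 2}}$; squaring and applying the second-chaos product formula $I_2(h)^2 = I_4(h\otimes h)+4I_2(h\otimes_1 h)+2\|h\|^2$ yields $\E[\|DF_n\|^4] = 32\|f_n\otimes_1 f_n\|^2+16\|f_n\|^4$. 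The well-known second-chaos identity $\E[F_n^4] = 3(\E[F_n^2])^2+48\|f_n\otimes_1 f_n\|^2$, itself a consequence of the same product formulas, then eliminates $\|f_n\otimes_1 f_n\|^2$ in favor of $\E[F_n^4]$ and $\E[F_n^2]^2$, generating the coefficient $\tfrac{1}{24}$ of $F_n^4$ in the stated moment bound; the purely deterministic $\nu^4$ terms are then produced by completing the square in $|\nu^2-\tfrac{1}{2}\E[F_n^2]|$. I expect the main obstacle to be the key identity $\E[F_n\Gamma_1(F_n)] = \tfrac{1}{2}\E[F_n^3]$, which is the Malliavin-to-moment bridge that converts an a-priori derivative-valued cross term into something expressible via $\E[F_n^3]$; once this is in place, the remainder of the argument is a careful but essentially mechanical bookkeeping exercise.
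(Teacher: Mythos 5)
Your proposal is correct and follows essentially the same route as the paper: specialize $\Delta_n$ with $a_1=-\nu$, $a_2=\tfrac12$, apply Cauchy--Schwarz/Jensen, expand the square using $\tfrac1r\E[F_n\|DF_n\|^2_{\mathcal H}]=\tfrac12\E[F_n^3]$, and for $r=2$ re-express $\E[\|DF_n\|^4_{\mathcal H}]$ through the fourth moment of $F_n$. The only difference is cosmetic: where the paper invokes Lemmas 2.1 and 7.1 of \cite{NP09II}, you rederive the same identities from the $\Gamma_j$/cumulant relations and the product formula, and your bookkeeping (including the cancelling $\nu^4$ constants) matches the stated bounds.
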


\begin{proof}
We are interested in the proximity between $F_n=I_r(f_n)$ and $F=\nu(Z^2-1)$. Then, $\Delta_n$ reduces to:
\begin{align}\label{SimEx}
\Delta_n&=\E \Bigg[\vert
 a_1I_r(f_n)+a_2 \Gamma\big(I_d(f_n)\big)-a_2\kappa_{2}(I_r(f_n))\vert \Bigg]+\vert a_2\vert \vert \kappa_{2}(\nu(Z^2-1)) - \kappa_{2}(I_r(f_n)) \vert,\nonumber\\
 &=\E \Bigg[\vert
 a_1I_r(f_n)+a_2 \Gamma\big(I_r(f_n)\big)-a_2\kappa_{2}(I_r(f_n))\vert \Bigg]+\vert a_2\vert \vert 2\nu^2-r!\mid\mid f_n\mid\mid_{\mathcal{H}^{\otimes r}}^2 \vert.
\end{align}
We are going to give a bound on the first term of the right hand side of equation (\ref{SimEx}). First note that $a_1=-\nu$ and $a_2=1/2$. By Cauchy-Schwarz inequality, we have:
\begin{align*}
\E \Bigg[\vert
 a_1I_r(f_n)+a_2 \Gamma\big(I_r(f_n)\big)-a_2\kappa_{2}(I_r(f_n))\vert \Bigg] \leq \E \Bigg[\vert
 -\nu F_n+\frac{1}{2r}\mid\mid D(F_n)\mid\mid^2_{\mathcal{H}} -\frac{1}{2}r! \mid\mid f_n\mid\mid^2_{\mathcal{H}^{\otimes r}}\vert^2 \Bigg]^{\frac{1}{2}}.
\end{align*}
Expanding the square, we have:
\begin{align*}
\E \Bigg[\vert
 -\nu F_n+\frac{1}{2r}\mid\mid D(F_n)\mid\mid^2_{\mathcal{H}} -\frac{1}{2}r! \mid\mid f_n\mid\mid^2_{\mathcal{H}^{\otimes r}}\vert^2 \Bigg]&=\E \Bigg[ \nu^2F_n^2-\frac{\nu}{r}F_n\mid\mid D(F_n)\mid\mid^2_{\mathcal{H}}\\
 &+\frac{1}{4r^2}\mid\mid D(F_n)\mid\mid^4_{\mathcal{H}}+\nu F_nr!\mid\mid f_n\mid\mid^2_{\mathcal{H}^{\otimes r}}\\
 &-\frac{1}{2r}r!\mid\mid f_n\mid\mid^2_{\mathcal{H}^{\otimes r}}\mid\mid D(F_n)\mid\mid^2_{\mathcal{H}}+\frac{(r!)^2}{4}\mid\mid f_n\mid\mid^4_{\mathcal{H}^{\otimes r}} \Bigg].
\end{align*}
Now, we use Lemma $2.1$ of \cite{NP09II} to get:
\begin{align*}
\E \Bigg[\vert
 -\nu F_n+\frac{1}{2r}\mid\mid D(F_n)\mid\mid^2_{\mathcal{H}} -\frac{1}{2}r! \mid\mid f_n\mid\mid^2_{\mathcal{H}^{\otimes r}}\vert^2 \Bigg]&=\E \Bigg[ \nu^2F_n^2-\frac{\nu}{2}F^3_n+\frac{1}{4r^2}\mid\mid D(F_n)\mid\mid^4_{\mathcal{H}}\\
 &-\frac{1}{2}r!\mid\mid f_n\mid\mid^2_{\mathcal{H}^{\otimes r}}F_n^2+\frac{(r!)^2}{4}\mid\mid f_n\mid\mid^4_{\mathcal{H}^{\otimes r}} \Bigg]
\end{align*}
Thus, we have the following bound for $\Delta_n$:
\begin{align*}
\Delta_n\leq \E \Bigg[ (\nu^2-\frac{1}{2}r!\mid\mid f_n\mid\mid^2_{\mathcal{H}^{\otimes r}})F_n^2-\frac{\nu}{2}F^3_n+\frac{1}{4r^2}\mid\mid D(F_n)\mid\mid^4_{\mathcal{H}}+\frac{(r!)^2}{4}\mid\mid f_n\mid\mid^4_{\mathcal{H}^{\otimes r}} \Bigg]^{\frac{1}{2}}+ \vert \nu^2-\frac{r!}{2}\mid\mid f_n\mid\mid_{\mathcal{H}^{\otimes r}}^2 \vert.
\end{align*}
In particular, if $F_n$ is in the second Wiener chaos, using Lemma $7.1$ of \cite{NP09II}, we obtain the moment bound:
\begin{align*}
\Delta_n\leq \E \Bigg[ (\nu^2-\mid\mid f_n\mid\mid^2_{\mathcal{H}^{\otimes 2}})F_n^2-\frac{\nu}{2}F^3_n+\frac{1}{24}F_n^4+\frac{3}{2}\mid\mid f_n\mid\mid^4_{\mathcal{H}^{\otimes 2}} \Bigg]^{\frac{1}{2}}+ \vert \nu^2-\mid\mid f_n\mid\mid_{\mathcal{H}^{\otimes 2}}^2 \vert,
\end{align*}
which concludes the proof of the corollary.
\end{proof}

\subsection{An application to $U$-statistics}
\label{sec:an-application-u}

A classical setting for which we can obtain explicit rates of convergence is taken from the theory of $U$-statistics. We consider a second order $U$-statistics with kernel $h$ defined by:
\begin{align*}
h(x,y)=\alpha H_{q}(x)H_{q}(y),
\end{align*}
where $H_{q}$ is the Hermite polynomial of degree $q$ and $\alpha\ne 0$. Then, we set:
\begin{align*}
nU_n(h)&=\frac{2}{n-1}\sum_{i<j}h(Z_i,Z_j),\\
&=\frac{\alpha}{q!}I_{2q}\bigg(\frac{2}{n-1}\sum_{i<j}h_i^{\otimes q} \tilde{\otimes} h_j^{\otimes q}\bigg).
\end{align*}
By the standard theory of U-statistics, we have the following convergence in distribution:
\begin{align*}
nU_n(h) \Rightarrow \alpha (Z^2-1).
\end{align*}
In order to apply the bound of Theorem \ref{bound-secondchaos}, we need to compute $\Gamma(nU_n(h))$ and the second cumulant of $nU_n(h)$. First of all, we have:
\begin{align*}
\kappa_2\big(nU_n(h)\big)&=\mathbb{E}\big(n^2 U_n(h)^2\big),\\
&=\frac{\alpha^2}{(q!)^2}(2q)! \big(\frac{2}{n-1}\big)^2\sum_{i_1<j_1}\sum_{i_2<j_2}\langle h_{i_1}^{\otimes q}\tilde{\otimes} h_{j_1}^{\otimes q}; h_{i_2}^{\otimes q}\tilde{\otimes} h_{j_2}^{\otimes q}\rangle_{\mathcal{H}^{\otimes 2q}},\\
&=\frac{1}{4}\frac{\alpha^2}{(q!)^2}(2q)! \big(\frac{2}{n-1}\big)^2\sum_{i_1\ne j_1}\sum_{i_2\ne j_2}\langle h_{i_1}^{\otimes q}\tilde{\otimes} h_{j_1}^{\otimes q}; h_{i_2}^{\otimes q}\tilde{\otimes} h_{j_2}^{\otimes q}\rangle_{\mathcal{H}^{\otimes 2q}}.
\end{align*}
Fix $i_1\ne j_1$ and $i_2\ne j_2$. Using the definition of the symmetrization operator, we have:
\begin{align*}
\langle h_{i_1}^{\otimes q}\tilde{\otimes} h_{j_1}^{\otimes q}; h_{i_2}^{\otimes q}\tilde{\otimes} h_{j_2}^{\otimes q}\rangle_{\mathcal{H}^{\otimes 2q}}&=\langle h_{i_1}^{\otimes q}\otimes h_{j_1}^{\otimes q}; h_{i_2}^{\otimes q}\tilde{\otimes} h_{j_2}^{\otimes q}\rangle_{\mathcal{H}^{\otimes 2q}}.
\end{align*}
Now, we note that if $i_1\ne i_2\ne j_2$ then the previous quantity is equal to $0$ and similarly if $j_1\ne i_2\ne j_2$. Thus, we have to consider only the cases $i_1=i_2$, $j_1=j_2$ and $i_1=j_2$, $j_1=i_2$. Assume that $i_1=i_2$ and $j_1=j_2$. In this case, the only permutations which are going to contribute are the one such that:
\begin{align*}
&\forall i\in\{1,...,q \},\ \sigma(i)\in\{1,...,q\},\\
&\forall j\in\{q+1,...,2q \},\ \sigma(j)\in\{q+1,...,2q\}.
\end{align*}
We have $(q!)^2$ such permutations. Thus, we have:
\begin{align*}
\kappa_2\big(nU_n(h)\big)&=\frac{1}{4}\frac{\alpha^2}{(q!)^2}(2q)! \big(\frac{2}{n-1}\big)^2\sum_{i_1\ne j_1}\sum_{i_2\ne j_2}\bigg(\delta_{i_1,i_2}\delta_{j_1,j_2}+\delta_{j_1,i_2}\delta_{i_1,j_2}\bigg)\frac{(q!)^2}{(2q)!},\\
&=\alpha^2 \big(\frac{2}{n-1}\big)^2\bigg[ \sum_{i_1\ne j_1} 1\bigg],\\
&=\alpha^2 \big(\frac{2}{n-1}\big)^2\dfrac{n(n-1)}{2},\\
&=\alpha^2\frac{2 n}{n-1}.
\end{align*}
Thus, we can write for $\Delta_n$:
\begin{equation}
\Delta_n=\mathbb{E}\big(\vert a_1(nU_n(h))+a_2\Gamma(nU_n(h))-a_2 \kappa_{2}\big(nU_n(h)\big)  \vert\big)+ \vert a_2\vert \vert \kappa_{2}\big(\alpha (Z^2-1)\big) - \kappa_{2}\big(nU_n(h)\big) \vert.
\end{equation}
In particular, for the second term on the right-hand side, we have the following quantitative bound:
\begin{align*}
\vert a_2\vert \vert \kappa_{2}\big(\alpha (Z^2-1)\big) - \kappa_{2}\big(nU_n(h)\big) \vert \leq\frac{2\alpha\vert a_2\vert}{n-1}.
\end{align*}
Now, for the first term we can use Theorem $8.4.4$ of \cite{NourdinPeccatibook} to obtain the following bound:
\begin{align*}
& \mathbb{E}\big(\vert a_1(nU_n(h))+a_2\Gamma(nU_n(h))-a_2
  \kappa_{2}\big(nU_n(h)\big)  \vert\big)\\
&=\mathbb{E}\bigg[\mid a_1 I_{2q}(f_n)+a_2\sum_{r=1}^{2q-1}(2q)(r-1)!\binom{2q-1}{r-1}^2 I_{4q-2r}\big(f_n\otimes_rf_n\big)\mid\bigg]\\
&\leq \mathbb{E}\bigg[\mid\sum_{r=1,r\ne q}^{2q-1}q(r-1)!\binom{2q-1}{r-1}^2 I_{4q-2r}\big(f_n\otimes_rf_n\big)\\
&\quad +I_{2q}(-\alpha f_n+q!\binom{2q-1}{q-1}^2f_n\otimes_{q}f_n)\mid\bigg].\\
&\leq \bigg(\sum_{r=1,r\ne q}^{2q-1}q^2(r-1)!^2\binom{2q-1}{r-1}^4(4q -2r)! \mid\mid f_n\tilde{\otimes}_rf_n\mid\mid^2_{\mathcal{H}^{\otimes 4q-2r}}\\
&\quad +(2q)!\mid\mid -\alpha f_n+q!\binom{2q-1}{q-1}^2f_n\tilde{\otimes}_{q}f_n\mid\mid^2_{\mathcal{H}^{\otimes 2q}}\bigg)^{\frac{1}{2}},
\end{align*}
where:
\begin{align*}
f_n=\frac{\alpha}{q!}\frac{2}{n-1}\sum_{i<j}h_i^{\otimes q} \tilde{\otimes} h_j^{\otimes q},
\end{align*}
and $f_n\tilde{\otimes}_rf_n$ denotes the symmetrized contraction of order $r$ of $f_n$ (see e.g. p $10$ of \cite{Nua} for a definition). To exhibit an explicit rate of convergence for this term, we restrict our study to the case where $q=1$. Therefore, the upper bound becomes:
\begin{align*}
\mathbb{E}\big(\vert a_1(nU_n(h))+a_2\Gamma(nU_n(h))-a_2 \kappa_{2}\big(nU_n(h)\big)  \vert\big)\leq \sqrt{2}\mid\mid -\alpha f_n+f_n\tilde{\otimes}_{1}f_n\mid\mid_{\mathcal{H}^{\otimes 2}}.
\end{align*}
Thus, we need to compute the following quantities:
\begin{align*}
&(I)=\langle f_n;f_n\otimes_1 f_n\rangle_{\mathcal{H}^{\otimes 2}},\\
&(II)=\langle f_n\tilde{\otimes}_1 f_n;f_n\tilde{\otimes}_1 f_n\rangle_{\mathcal{H}^{\otimes 2}}.
\end{align*}
First, we note that:
\begin{align*}
f_n\otimes_1 f_n=\bigg(\dfrac{2\alpha}{n-1}\bigg)^2\frac{1}{4}\sum_{i\ne j,\ l\ne k}h_i\tilde{\otimes}h_j\otimes_1 h_l\tilde{\otimes}h_k
\end{align*}
Moreover, we have:
\begin{align*}
h_i\tilde{\otimes}h_j\otimes_1 h_l\tilde{\otimes}h_k=\frac{1}{4}\big(h_i\otimes h_l\delta_{j,k}+h_i\otimes h_k\delta_{j,l}+h_j\otimes h_l\delta_{i,k}+h_j\otimes h_k\delta_{i,l}\big).
\end{align*}
Thus, we get:
\begin{align*}
(I)&=\frac{1}{4}\dfrac{\alpha^2}{(n-1)^2}\sum_{i\ne j,\ l\ne k} \langle f_n;h_i\otimes h_l\delta_{j,k}+h_i\otimes h_k\delta_{j,l}+h_j\otimes h_l\delta_{i,k}+h_j\otimes h_k\delta_{i,l} \rangle_{\mathcal{H}^{\otimes 2}},\\
&=\frac{1}{4}\dfrac{\alpha^3}{(n-1)^3}\sum_{i\ne j,\ l\ne k,\ m\ne o} \langle h_m\tilde{\otimes} h_o;h_i\otimes h_l\delta_{j,k}+h_i\otimes h_k\delta_{j,l}+h_j\otimes h_l\delta_{i,k}+h_j\otimes h_k\delta_{i,l} \rangle_{\mathcal{H}^{\otimes 2}}
\end{align*}
The first term of the previous sum then gives:
\begin{align*}
& \frac{1}{4}\dfrac{\alpha^3}{(n-1)^3}\sum_{i\ne j,\ l\ne k,\ m\ne o}
  \langle h_m\tilde{\otimes} h_o;h_i\otimes
  h_l\delta_{j,k}\rangle_{\mathcal{H}^{\otimes 2}}\\
&=\frac{1}{8}\dfrac{\alpha^3}{(n-1)^3}\sum_{i\ne j,\ l\ne k,\ m\ne o}\langle h_m\otimes h_o+h_o\otimes h_m;h_i\otimes h_l\delta_{j,k}\rangle_{\mathcal{H}^{\otimes 2}},\\
&=\frac{1}{4}\dfrac{\alpha^3}{(n-1)^2}n(n-2).
\end{align*}
The three other terms contribute in a similar way. Thus,
\begin{align*}
(I)=\dfrac{\alpha^3}{(n-1)^2}n(n-2).
\end{align*}
To conclude we need to compute $(II)$. One can check that similar computations lead to:
\begin{align*}
(II)=\dfrac{\alpha^4}{(n-1)^3}n(n-2)(n-3).
\end{align*}
Now, we have:
\begin{align*}
\mid\mid -\alpha f_n+f_n\tilde{\otimes}_{1}f_n\mid\mid^2_{\mathcal{H}^{\otimes 2}}&=\alpha^2\frac{\alpha^2n}{n-1}-2\dfrac{\alpha^4}{(n-1)^2}n(n-2)+\dfrac{\alpha^4}{(n-1)^3}n(n-2)(n-3),\\
&=\alpha^4\frac{n(n-3)}{(n-1)^3}.
\end{align*}
Therefore, for $q=1$, we have the following bound on $\Delta_n$:
\begin{align*}
\Delta_n\leq \alpha^2\sqrt{\frac{n(n-3)}{(n-1)^3}}+\frac{\alpha}{n-1}.
\end{align*}
\noindent
We end this subsection by the following concluding remark.  

\begin{remark}
\begin{itemize}
\item When $d=1$, it is possible to consider in principle more general examples than the previous one. Indeed, consider $\alpha_2,...,\alpha_m$ non-zero real numbers. Consider the following symmetric polynomial kernel in $m\geq 2$ variables:
\begin{align*}
h(x_1,...,x_m)= \alpha_m x_1...x_m+ \alpha_{m-1}\sum_{(i_1,...,i_{m_1}),\ i_k\ne i_l}x_{i_1}...x_{i_{m-1}}+...+\alpha_2\sum_{(i,j),\ i\ne j}x_ix_j.
\end{align*}
Then, we consider the following sequence of random variable ($n\geq m$):
\begin{align*}
U_n(h)=\frac{1}{\binom {n} {m}}\sum_{(i_1,...,i_m),\ i_l\ne i_k}h(Z_{i_1},...,Z_{i_m}),
\end{align*}
with $Z_i$ iid sequence of standard normal random variables. From the theory of $U$-statistics (see \cite{Ser80}), we have the following convergence in law:
\begin{align*}
nU_n(h) \Rightarrow \alpha_2 (Z^2-1),
\end{align*}
where $Z$ is a standard normal random variable. Theorem \ref{bound-secondchaos} provides a quantitative bound for this convergence in distribution in terms of the cumulant of $nU_n(h)$ of order $2$ and $\Gamma_1(nU_n(h))$.
\item When $d=2$, it is possible to perform a similar analysis for the quantity $\Delta_n$ as done in Lemma \ref{ncgamma}. The resulting bounds on $\Delta_n$ would be of the same nature as the ones obtained in the article \cite{ET15} (see in particular Section $5$, Theorem $5.1$ and Corollary $5.10$). To not extend to much the length of the paper, we leave this to the interested readers.
\item When $d\geq 3$, to the best of our knowledge, Theorem \ref{bound-secondchaos} is the first quantitative result for sequences of random variables belonging to a finite sum of Wiener chaoses converging in distribution towards element of the second Wiener chaos. When the sequence $F_n$ lives inside the second Wiener chaos a quantitative bound in Wasserstein-2 distance has been obtained recently in \cite{AAPS}. 
\end{itemize}
\end{remark}



\section{Quantitative bounds for generalized Dickman convergence}
\label{sec:quant-bounds-gener}

\subsection{General bounds for logarithmic combinatorial structures}

Let $0<\theta<\infty$ and $\{X^{\theta}_i\}$ be a sequence of independent integer-valued random variables satisfying the following conditions:
\begin{itemize}
\item We assume that:
\begin{align*}
\underset{i\rightarrow+\infty}{\lim}i\mathbb{P}(X^{\theta}_i=1)=\underset{i\rightarrow+\infty}{\lim}i\mathbb{E}(X^{\theta}_i)=\theta.
\end{align*}
\item For all $i\geq 1$, there exist a random variable $(X^{\theta}_i)^*$ such that:
\begin{align*}
\forall\phi,\ \mathbb{E}[X^{\theta}_i\phi(X^{\theta}_i)]=\mathbb{E}[X^\theta_i]\mathbb{E}[\phi(X^{\theta}_i+(X^{\theta}_i)^*)].
\end{align*}
\item We have:
\begin{align*}
\tilde{\theta}:=\underset{i\geq 1}{\sup}\big(i\mathbb{E}(X^{\theta}_i)\big)<+\infty.
\end{align*}
\item There exists $\gamma>0$ such that:
\begin{align*}
\underset{n\geq 1}{\sup} \sum_{k=1}^n \log\bigg(\mathbb{E}[\exp\big(\frac{k X^{\theta}_k\gamma}{n}\big)]\bigg)<+\infty.
\end{align*}
\end{itemize}
Then, we consider the following sequence of random variables:
\begin{align*}
T_n(\theta)=\frac{1}{n}\sum_{i=1}^n iX^{\theta}_i.
\end{align*}
We introduce the following random variable: let $I$ be a random variable independent of $\{X^{\theta}_i\}$ with values in $\{1,...,n\}$ such that:
\begin{align*}
\mathbb{P}\big(I=j\big)=\dfrac{j\mathbb{E}[X^{\theta}_j]}{\sum_{k=1}^nk\mathbb{E}[X^{\theta}_k]}.
\end{align*}
From the previous assumptions, we have the following properties regarding the sequence $T_n(\theta)$.

\begin{proposition}\label{Tn}
\begin{itemize}
\item $T_n(\theta)$ admits an additive size-bias defined by:
\begin{align*}
\forall\phi,\ \mathbb{E}[T_n(\theta)\phi(T_n(\theta))]=\mathbb{E}[T_n(\theta)]\mathbb{E}[\phi(T_n(\theta)+\frac{I(X^{\theta}_I)^*}{n})].
\end{align*}
\item We have the following convergence in distribution:
\begin{align*}
T_n(\theta)\Rightarrow Z_\theta,
\end{align*}
where $Z_\theta$ is the generalized Dickman distribution defined by:
\begin{align*}
\forall t\in\mathbb{R},\ \mathbb{E}[e^{it Z_\theta}]=\exp\bigg(\theta\int_0^1\dfrac{e^{it x}-1}{x}dx\bigg).
\end{align*}
\item Finally, $Z_\theta$ satisfies the following additive size-bias relation:
\eq
\label{eq:sizebiasdickman}
\mathbb{E}[Z_\theta\phi(Z_\theta)]=\theta\mathbb{E}[\phi(Z_\theta+U)],
\qe
where $U$ is an uniform random variable on $[0,1]$ independent of $Z_\theta$.
\end{itemize}
\end{proposition}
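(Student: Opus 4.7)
The plan is to address the three assertions in order, since item (i) can be leveraged either as a consistency check or, combined with item (iii), as an alternative Stein-type route to the convergence in item (ii). For the size-bias identity in (i), I would exploit the standard coupling for weighted sums of independent nonnegative summands. Writing $T_n(\theta) = \sum_{i=1}^n (i/n)\, X_i^\theta$, I would fix a test function $\phi$, condition on the index $i$, and apply the per-coordinate hypothesis $\E[X_i^\theta \psi(X_i^\theta)] = \E[X_i^\theta]\, \E[\psi(X_i^\theta + (X_i^\theta)^*)]$ with $\psi(x) = \phi\bigl(T_n(\theta) - (i/n)X_i^\theta + (i/n)x\bigr)$, using independence of $X_i^\theta$ from the remaining summands. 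Summing and dividing by $\E[T_n(\theta)] = \sum_j (j/n)\E[X_j^\theta]$ then identifies the mixing distribution of $I$ as $\PP(I=j) = j\E[X_j^\theta]/\sum_k k\E[X_k^\theta]$, giving the announced additive size-bias with increment $I(X_I^\theta)^*/n$.

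For the convergence in (ii) I would work at the level of characteristic functions. By independence, $\log \phi_{T_n(\theta)}(t) = \sum_{i=1}^n \log \E[\exp(i t i X_i^\theta/n)]$. The uniform exponential-moment hypothesis guarantees the logarithms are well-defined and provides a uniform-in-$n$ control of Taylor remainders. Expanding $\E[e^{it i X_i^\theta/n}-1]$ over the values of $X_i^\theta$ isolates a main term $\sum_{i=1}^n \PP(X_i^\theta=1)(e^{it i /n}-1)$ plus a remainder supported on $\{X_i^\theta \ge 2\}$, the latter being negligible because $i\,\PP(X_i^\theta \ge 2) \le i\bigl(\E[X_i^\theta] - \PP(X_i^\theta=1)\bigr) \to 0$. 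The hypothesis $i\,\PP(X_i^\theta=1) \to \theta$ then shows the main term is a Riemann sum converging to $\theta \int_0^1 (e^{itx}-1)/x \, dx = \log \phi_{Z_\theta}(t)$, and Levy's continuity theorem concludes.

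The identity in (iii) is a direct Fourier verification. Testing against $\phi(x) = e^{itx}$, the left-hand side reads $\E[Z_\theta e^{it Z_\theta}] = -i\,\phi_{Z_\theta}'(t)$ while the right-hand side equals $\theta\, \phi_{Z_\theta}(t)\,\E[e^{itU}] = \theta\, \phi_{Z_\theta}(t)\,(e^{it}-1)/(it)$. Differentiating $\log \phi_{Z_\theta}(t) = \theta \int_0^1 (e^{itx}-1)/x\, dx$ yields $\phi_{Z_\theta}'(t)/\phi_{Z_\theta}(t) = \theta(e^{it}-1)/t$, which matches the previous expressions. A density argument (exponentials in the class of bounded measurable functions) then extends the identity to the stated class of test functions $\phi$.

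The technical heart of the proof is item (ii): one must simultaneously justify replacing each $\log(1+z_i)$ by its linearization uniformly in $i$ and show that the $\{X_i^\theta\ge 2\}$ contributions together with the quadratic remainders in the logarithm are $o(1)$. The uniform exponential moment hypothesis is precisely what makes these estimates robust in $n$ and, crucially, uniform near $i/n \approx 1$ where $|e^{it i/n}-1|$ is of order one, so that the Riemann sum interpretation of the leading term is legitimate. Once this is settled, the three items are coherent: (i) and (iii) together provide the size-bias point of view on the limit, while (ii) is the quantitative convergence these identities ultimately govern.
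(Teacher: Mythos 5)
Your treatment of the first bullet point coincides with the paper's: condition on the index $i$, apply the coordinatewise size-bias hypothesis to the shifted test function $\psi(x)=\phi\bigl(T_n(\theta)-\tfrac{i}{n}X_i^\theta+\tfrac{i}{n}x\bigr)$, and recognize the law of $I$ after normalizing by $\E[T_n(\theta)]$. For the second and third bullet points, however, the paper does not argue at all: it simply invokes Theorem 6.3 and relation (4.22) of \cite{ABT03}, whereas you give self-contained proofs, which is a genuinely different (and more informative) route. Your Fourier verification of the Dickman size-bias identity is correct: differentiating the L\'evy--Khintchine form gives $\phi_{Z_\theta}'(t)/\phi_{Z_\theta}(t)=\theta(e^{it}-1)/t$, which, combined with $\E[Z_\theta e^{itZ_\theta}]=-i\phi_{Z_\theta}'(t)$, matches $\theta\,\phi_{Z_\theta}(t)\,\E[e^{itU}]=\theta\,\phi_{Z_\theta}(t)(e^{it}-1)/(it)$. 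Two small points in your characteristic-function proof of the convergence deserve care. First, the negligibility of the contribution of $\{X_i^\theta\ge2\}$ does not follow from the pointwise statement $i\,\PP(X_i^\theta\ge2)\to0$ alone: bounding $|e^{itik/n}-1|$ by $2$ leaves a sum of the form $\sum_i \varepsilon_i/i$ with $\varepsilon_i\to0$, which can diverge. You should instead use $|e^{itik/n}-1|\le |t|ik/n$, which yields $\tfrac{|t|}{n}\sum_{i=1}^n i\bigl(\E[X_i^\theta]-\PP(X_i^\theta=1)\bigr)\to0$ by Ces\`aro averaging; the same averaging handles the error $i\,\PP(X_i^\theta=1)-\theta$ in the Riemann-sum step. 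Second, the uniform control of the Taylor remainder of $\log(1+z_i)$ comes from the hypothesis $\tilde\theta=\sup_i i\,\E[X_i^\theta]<\infty$, which gives $|z_i|=|\E[e^{itiX_i^\theta/n}]-1|\le |t|\tilde\theta/n$, and not from the exponential-moment hypothesis; the latter is used in the paper only to obtain the sub-exponential concentration needed in Theorem \ref{GeneBound}. With these adjustments your argument is complete, and it buys an explicit, reference-free proof where the paper relies on the machinery of logarithmic combinatorial structures.
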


\begin{proof}
Let $\phi$ be a function such that:
\begin{align*}
\mathbb{E}[\mid T_n(\theta)\phi(T_n(\theta))\mid]<+\infty.
\end{align*}
Then, by standard computations, we have:
\begin{align*}
\mathbb{E}[ T_n(\theta)\phi(T_n(\theta))]&=\frac{1}{n}\sum_{i=1}^ni\mathbb{E}[X^{\theta}_i\phi(T^i_n(\theta)+\frac{i X^{\theta}_i}{n})],\\
&=\frac{1}{n}\sum_{i=1}^ni\mathbb{E}[X^\theta_i]\mathbb{E}[\phi(T^i_n(\theta)+\frac{i (X^{\theta}_i+(X^{\theta}_i)^*)}{n})],\\
&=\frac{1}{n}\sum_{i=1}^ni\mathbb{E}[X^\theta_i]\mathbb{E}[\phi(T_n(\theta)+\frac{i (X^{\theta}_i)^*}{n})],\\
&=\mathbb{E}[T_n(\theta)]\mathbb{E}[\phi(T_n(\theta)+\frac{I(X^{\theta}_I)^*}{n})],
\end{align*}
which proves the first bullet point. For the second bullet point, it
is a direct application of Theorem $6.3$ of \cite{ABT03}. For the
third bullet point, this is a consequence of the results contained in
Chapter $4$ of \cite{ABT03} (see in particular relation $(4.22)$
p. $83$). This concludes the proof of the proposition.
\end{proof}
\noindent
Thanks to the approach developed in the previous sections, we obtain the following quantitative bound on the smooth Wasserstein-3 distance between $T_n(\theta)$ and $Z_\theta$. 

\begin{theorem}\label{GeneBound}
Let $\Delta_n$ be the quantity defined by:
\begin{align*}
\Delta_n=\mid \theta -\mathbb{E}[T_n(\theta)] \mid+\mathbb{E}\left[\left| U-\frac{I(X^{\theta}_I)^*}{n}\right|\right].
\end{align*}
Then, we have:
\begin{align*}
\mathcal{W}_3\big(T_n(\theta),Z_\theta\big)\leq C_\theta \Delta_n \sqrt{\mid \log(\Delta_n)\mid},
\end{align*}
for some $C_\theta>0$.
\end{theorem}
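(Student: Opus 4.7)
The plan is to instantiate the three-step Stein--Tikhomirov programme of Section~\ref{sec:gener-appr-stein} with the generalised Dickman as target; all the ingredients are in place because both $T_n(\theta)$ and $Z_\theta$ satisfy additive size-bias identities (Proposition~\ref{Tn}), which is exactly the structural feature required in Section~\ref{sec:operator-l_infty} to produce a tractable operator $L_\infty$. \textbf{Step 1.} Applying the Dickman size-bias \eqref{eq:sizebiasdickman} to the test function $f(x)=e^{itx}$ gives $\phi_\infty'(t)=i\theta\phi_U(t)\phi_\infty(t)$ with $\phi_U(t)=(e^{it}-1)/(it)$, so that \eqref{eq:13} reads $L_\infty\phi(t)=\phi'(t)-i\theta\phi_U(t)\phi(t)$. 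Applying the size-bias of $T_n(\theta)$ (first bullet of Proposition~\ref{Tn}) to the same test function---and observing that in the coupling used there the offset $V_n:=I(X_I^\theta)^*/n$ is independent of $T_n(\theta)$---yields $\phi_n'(t)=i\mathbb{E}[T_n(\theta)]\phi_n(t)\phi_{V_n}(t)$, hence
\[L_\infty\phi_n(t)=i\phi_n(t)\bigl(\mathbb{E}[T_n(\theta)]\phi_{V_n}(t)-\theta\phi_U(t)\bigr).\]
Splitting $\mathbb{E}[T_n(\theta)]\phi_{V_n}-\theta\phi_U=(\mathbb{E}[T_n(\theta)]-\theta)\phi_U+\mathbb{E}[T_n(\theta)](\phi_{V_n}-\phi_U)$, using $|\phi_n|\le 1$, $\mathbb{E}[T_n(\theta)]\le\tilde\theta$ and the standard inequality $|\phi_{V_n}(t)-\phi_U(t)|\le|t|\,\mathbb{E}[|V_n-U|]$, produces the pointwise bound
\[|L_\infty\phi_n(t)|\le|\mathbb{E}[T_n(\theta)]-\theta|+\tilde\theta|t|\,\mathbb{E}[|V_n-U|]\le C_\theta(1+|t|)\Delta_n.\]

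\textbf{Step 2.} Substituting Step~1 into the identity $\phi_n-\phi_\infty=\mathcal{D}_\infty(L_\infty\phi_n)$ of \eqref{eq:9} gives
\[|\phi_n(t)-\phi_\infty(t)|\le C_\theta\Delta_n|\phi_\infty(t)|\int_0^{|t|}\frac{1+s}{|\phi_\infty(s)|}\,ds.\]
The key ingredient is the two-sided polynomial control $|\phi_\infty(t)|\asymp(1+|t|)^{-\theta}$, which follows from $\log|\phi_\infty(t)|=\theta\int_0^{|t|}(\cos u-1)/u\,du$ and the classical asymptotic $\int_0^t(\cos u-1)/u\,du=-\gamma-\log t+o(1)$ as $t\to+\infty$. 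Inserting these two-sided bounds, the growth of $1/|\phi_\infty(s)|$ inside the integral cancels exactly with the decay of $|\phi_\infty(t)|$ outside, leaving the polynomial estimate $|\phi_n(t)-\phi_\infty(t)|\le C_\theta\Delta_n|t|(1+|t|)$.

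\textbf{Step 3.} I apply Theorem~\ref{thm:bounds-char-funct-1} with $\epsilon=C_\theta\Delta_n$. Hypothesis~(H2) with $\alpha=1$ is readily verified: the fourth assumption on $\{X_k^\theta\}$ gives $\sup_n\mathbb{E}[\exp(\gamma T_n(\theta))]<+\infty$ (hence sub-exponential tails of $T_n(\theta)$ by Markov), and the Laplace transform $\mathbb{E}[e^{sZ_\theta}]=\exp(\theta\int_0^1(e^{sx}-1)/x\,dx)$ is finite for all $s\in\mathbb{R}$, so $Z_\theta$ has (super-)exponential tails as well. The Step~2 estimate has the form $|\xi|+|\xi|^2$ rather than a single monomial $|\xi|^p$, but the Cauchy--Schwarz--Plancherel step in the proof of Theorem~\ref{thm:bounds-char-funct-1} goes through verbatim with $|\xi|(1+|\xi|)$ in place of $|\xi|^p$---controlling the integral by $\|f'\|_2^2+\|f''\|_2^2+\|f'''\|_2^2$---so that the truncation/optimisation with $M=\lambda^{-1}\log(1/\epsilon)$ delivers the $\mathcal{W}_3$ bound of the expected form $C_\theta\Delta_n\sqrt{|\log\Delta_n|}$.

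The main obstacle is Step~2: the clean polynomial control on $|\phi_n-\phi_\infty|$ hinges on an exact cancellation between the growth of $1/|\phi_\infty(s)|$ under the integral and the decay of $|\phi_\infty(t)|$ outside, which works precisely because the Dickman characteristic function has polynomial decay of rate $\theta$ stemming from the logarithmic singularity of its L\'evy measure at $0$; any transplant of the argument to a target with different tail behaviour (e.g.\ stable) would require a separate Dawson-type computation.
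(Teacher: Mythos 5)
Your overall strategy is exactly the paper's: use the two additive size-bias identities to write $\phi_n-\phi_\infty$ as the Dickman Dawson operator applied to a discrepancy $\psi_n$, bound $\psi_n$ by $|\theta-\mathbb{E}[T_n(\theta)]|+|s|\,\mathbb{E}|U-V_n|$, and feed the resulting estimate $C(|t|+|t|^2)\Delta_n$ into Theorem~\ref{thm:bounds-char-funct-1} using the sub-exponential concentration coming from the fourth assumption. There is, however, one genuinely false claim in your Step~1: the offset $V_n=I(X^{\theta}_I)^*/n$ is \emph{not} independent of $T_n(\theta)$ in general, so the factorization $\phi_n'(t)=i\,\mathbb{E}[T_n(\theta)]\,\phi_n(t)\,\phi_{V_n}(t)$ does not hold. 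Indeed $(X_i^\theta)^*$ is coupled to $X_i^\theta$ (in the record-statistic application of Section~\ref{sec:an-appl-towards} one has $(X_i)^*=1-X_i$), and $X_I^\theta$ is a summand of $T_n(\theta)$. The size-bias identity only gives $\phi_n'(t)=i\,\mathbb{E}[T_n(\theta)]\,\mathbb{E}\big[e^{it(T_n(\theta)+V_n)}\big]$, with no factorization. The repair is exactly what the paper does: keep the joint expectation,
\begin{equation*}
L_\infty\phi_n(t)=i\,\mathbb{E}\Big[e^{itT_n(\theta)}\big(\mathbb{E}[T_n(\theta)]e^{itV_n}-\theta e^{itU}\big)\Big],
\end{equation*}
and bound the modulus \emph{inside} the expectation by $|\mathbb{E}[T_n(\theta)]-\theta|+\tilde\theta\,|t|\,\mathbb{E}|V_n-U|$; your stated conclusion of Step~1 then survives unchanged, but the intermediate justification must be rewritten.

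Two smaller comparisons. In Step~2 you invoke the two-sided asymptotics $|\phi_\infty(t)|\asymp(1+|t|)^{-\theta}$ and a cancellation between the inner and outer factors; this is correct but heavier than needed. The paper uses only the elementary fact that $|\phi_{Z_\theta}(s)|=\exp\big(\theta\int_0^{|s|}\frac{\cos u-1}{u}du\big)$ is non-increasing in $|s|$, so $|\phi_{Z_\theta}(t)|/|\phi_{Z_\theta}(s)|\le 1$ on the integration range and the integral of $|\psi_n|$ alone gives the polynomial bound. In Step~3 you are actually more careful than the paper: the bound $|t|+|t|^2$ is not of the monomial form demanded by (H1), and your observation that the Cauchy--Schwarz/Plancherel step goes through with the weight $|\xi|(1+|\xi|)$, at the cost of also controlling $\|f'\|_2$, is the right way to make the application of Theorem~\ref{thm:bounds-char-funct-1} rigorous (as is your explicit verification of (H2) for $Z_\theta$ via its everywhere-finite Laplace transform, a point the paper leaves implicit).
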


\begin{proof}
First of all, we prove the following bound on the difference between the characteristic functions of $T_n(\theta)$ and $Z_\theta$:
\begin{equation}\label{EqDiffChar}
\mid \phi_{T_n(\theta)}(t)-\phi_{Z_\theta}(t)\mid \leq \mid t\mid \mid \theta -\mathbb{E}[T_n(\theta)] \mid+\mid t\mid^2 \mathbb{E}\left[\left| U-\frac{I(X^{\theta}_I)^*}{n}\right|\right].
\end{equation}
Thanks to size bias relations satisfied by $T_n(\theta)$ and $Z_\theta$, we obtain:
\begin{align*}
\forall t>0,\ \phi_{Z_\theta}(t)-\phi_{T_n(\theta)}(t)=i \phi_{Z_\theta}(t)\int_0^t \frac{\psi_n(s)}{\phi_{Z_\theta}(s)}ds,
\end{align*}
with,
\begin{align*}
\psi_n(s)=\mathbb{E}\left[\theta\exp\big(i s (T_n(\theta)+U)\big)-\mathbb{E}[T_n(\theta)]\exp\left(i s (T_n(\theta)+\frac{I(X^\theta_I)^*}{n})\right)\right].
\end{align*}
Moreover, we have a similar relation for $t<0$. Using the fact that:
\eq
\label{eq:phizsphizt}
\forall s\in [0,t],\ \dfrac{1}{|\phi_{Z_\theta}(s)|}\leq \dfrac{1}{|\phi_{Z_\theta}(t)|},
\qe
We obtain the bound:
\begin{equation}\label{diffphiDick}
\mid \phi_{T_n(\theta)}(t)-\phi_{Z_\theta}(t)\mid \leq \mid t\mid \mid \theta -\mathbb{E}[T_n(\theta)] \mid+\mid t\mid^2\mathbb{E}\left[\left| U-\frac{I(X^{\theta}_I)^*}{n}\right|\right].
\end{equation}
Secondly, we note that the last bullet point of the assumptions regarding $\{X^{\theta}_i\}$ implies an uniform sub-exponential concentration bound for $T_n(\theta)$ by Markov inequality. Then, applying Theorem \ref{thm:bounds-char-funct-1}, we obtain the result.
\end{proof}

\subsection{An application towards analysis of algorithms}
\label{sec:an-appl-towards}
In the case $\theta=1$, there is a famous example for which an
explicit rate of convergence is achievable: consider a permutation
$\sigma$ chosen uniformly at random in $\mathfrak{S}_n$ (the set of
permutation of $\{1,...,n\}$). A record of $\sigma$ is a left-to-right
maxima, that is, an element $x_k = \sigma(k)$ in $x_1 \ldots x_n$ is a
record if $x_k\ge x_1, \ldots, x_{k-1}$. The position of this record
is $k$. The src statistic is $ src = \sum_{k=1}^n k \, X_k$ with $X_k$
the random variable which equals 1 if $k$ is a position of a record
and 0 otherwise; this statistic and its asymptotic behavior is
important e.g.\ for analysis of algorithms
\cite{Louchar2014,hwang2002quickselect}.  It is known (see
\cite{renyi1962theorie}) that the random variables
$X_1, X_2, \ldots, X_n$ are independent with $P(X_k=1) =
\frac{1}{k}$. Thus, $\{X_k\}$ clearly satisfy the above
conditions. Now, we have:
\begin{align*}
&\mathbb{E}[T_n]=1,\\
&(X_i)^*=1-X_i.
\end{align*}
Thus, the quantity $\Delta_n$ is equal to:
\begin{align*}
\Delta_n=\mathbb{E}\left[\left| U-\frac{I}{n}+\frac{I X_I}{n}\right|\right].
\end{align*}
Since $\mathbb{E}[\mid \frac{I X_I}{n}\mid]\leq \frac{C_1}{n}$ and since one can construct a coupling $(U, I)$ in such a way that $U$ is independent of $W$ and 
\begin{equation}
  \E \left| U-\frac{I}{n}
     \right| \sim \frac{1}{n}.
\end{equation}
(simply take $U \sim U[0,1]$ and  $I = j$ if $(j-1)/n \le U \le j/n$), we obtain the following bound on $\Delta_n$:
\begin{align*}
\Delta_n\leq \frac{C_2}{n},
\end{align*}
for some $C_2>0$ universal. Thus, Theorem \ref{GeneBound} implies:
\begin{equation}\label{rateDickW3}
\mathcal{W}_3\big(T_n,Z_1\big)\leq \frac{C}{n} \sqrt{ \log(n)}.
\end{equation}
For this simple example, It is possible as well to obtain a relevant quantitative bound 
in Kolmogorov distance using Ess\'een inequality (\ref{eq:1}) together with the fact that
for $\theta \geq 1$, the density of the Dickman distribution is bounded (see Lemma $4.7$ and Corollary $4.8$ of \cite{ABT03}). Set $T:=n^{1/3}$. Then, a direct application of inequalities (\ref{eq:1}) and (\ref{diffphiDick}) leads to:
\begin{equation}\label{rateDickKo}
\operatorname{Kol}\big(T_n,Z_1\big)\leq \frac{C}{n^{\frac{1}{3}}}.
\end{equation}
The previous bounds must be compared with similar results available in
the literature: first of all, our rate of convergence
(\ref{rateDickW3}) is faster than the one which can be obtained from
\cite[Corollary 11.13]{ABT03} in 1-Wasserstein distance. Moreover, one
can compare it as well with \cite[Theorem 3.4]{BN11} where they apply
Stein's method for the compound Poisson to obtain a similar result in
Wasserstein distance (with a rate of $1/n^{\gamma}$ for some
$\gamma>0$). See also Remark \ref{sec:purp-outl-paper}.

\begin{remark}
\begin{itemize}
\item For any $\theta$, we can obtain explicit rates of convergence in the
following simple and important case as well: let $X_i^\theta$ be a
sequence of independent Poisson random variables with parameters
$\theta/i$. Let $T_n(\theta)$ be the corresponding sequence of random
variables under study. Note that:
\begin{align*}
\mathbb{E}[T_n(\theta)]=\theta,\\
(X_i^\theta)^*=1.
\end{align*}
Thus, $\Delta_n$ reduces to:
\begin{align*}
\Delta_n=\mathbb{E}\left[\left| U-\frac{I}{n}\right|\right].
\end{align*}
By a similar coupling argument as in the first bullet point of this remark, we obtain:
\begin{align}
 \mathcal{W}_3(T_n, Z_\theta) \le  C_\theta\dfrac{\sqrt{\log n}}{n}.
\end{align}
\item The fourth assumption concerns concentration properties of the random
variable $T_n(\theta)$ which is the sum of weighted independent random
variables $X^\theta_i$ which admits an additive size biased
version. There is a close connection between coupling of random
variable and its size biased version with concentration properties. In
particular, in this direction, we pinpoint the following recent
article \cite{arratia2013bounded}.  
\end{itemize}
\end{remark}

\subsection{A Dickman limit theorem in number theory}
\label{sec:dickm-limit-theor}
Let $\{p_j\}_{j\geq 1}$ be an enumeration in increasing order of the prime numbers. Let $\{X_j\}$ be a sequence of independent random variables whose laws are defined by:
\begin{align*}
&\mathbb{P}\big(X_j=0\big)= \dfrac{p_j}{1+p_j},\\
&\mathbb{P}\big(X_j=1\big)=\dfrac{1}{1+p_j}.
\end{align*}
Let $S_n$ be defined by:
\begin{align*}
\forall n\geq 1,\ S_n=\dfrac{1}{\log(p_n)}\sum_{j=1}^n X_j \log(p_j).
\end{align*}
It is proved in \cite{CS13} (see Theorem $1$ page $200$) that $S_n$ converges in law to a Dickman distribution ($\theta=1$).\\
\\
Using our approach, we are going to give a new (simple) proof of this convergence result. Moreover, we are going to obtain quantitative bounds on the distance between the distributions of $Z$ and $S_n$; more precisely, we have the following theorem.

\begin{theorem}\label{DickNumTh}
For $n\geq 3$, let $\Delta_n$ be the quantity defined by:
\begin{align*}
\Delta_n=\dfrac{\log\log(n)}{\log(n)}.
\end{align*}
Then there exists $C>0$ such that
\begin{align*}
\mathcal{W}_{3}\big(S_n,Z\big)\leq C\Delta_n\sqrt{-\log(\Delta_n)},
\end{align*}
where $Z$ follows the Dickman distribution ($\theta=1$). In particular,
$$\mathcal{W}_{3}\big(S_n,Z\big) = \mathcal O \left( \frac{(\log \log n)^{3/2}}{\log n} \right).$$
\end{theorem}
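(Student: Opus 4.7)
My plan is to adapt the Stein–Tikhomirov scheme of Theorem~\ref{GeneBound} (applied to the records example in Section~\ref{sec:an-appl-towards}) to the prime-indexed sequence $S_n$. Note that the sequence $\{X_j\}$ here does \emph{not} satisfy the first hypothesis of Section~5.1 (since $j\,\mathbb{E}[X_j] \to 0$ rather than a positive constant), so Theorem~\ref{GeneBound} does not apply directly; instead I mimic its proof, which has the structure: derive an additive size-bias identity for $S_n$, use it to obtain a polynomial bound on $|\phi_{S_n}(t) - \phi_Z(t)|$, verify sub-exponential concentration, and feed the result into Theorem~\ref{thm:bounds-char-funct-1} with $p = 2$ and $\alpha = 1$.

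For the size-bias setup, since each $X_j$ is Bernoulli with mean $1/(1+p_j)$, its additive size-biased version is $X_j^* = 1 - X_j$ almost surely (just as in the records example). The computation underlying Proposition~\ref{Tn} then yields
\begin{align*}
\mathbb{E}[S_n\phi(S_n)] = \mathbb{E}[S_n]\,\mathbb{E}\!\left[\phi\!\left(S_n + \frac{\log(p_J)\,X_J^*}{\log p_n}\right)\right],
\end{align*}
with $J$ independent of $(X_j^*)$ and $\mathbb{P}(J = j) \propto \log(p_j)\,\mathbb{E}[X_j]$. Combining this with the Dickman size-bias \eqref{eq:sizebiasdickman} and the integral representation used in the derivation of \eqref{diffphiDick} produces
\begin{align*}
|\phi_{S_n}(t) - \phi_Z(t)| \le |t|\,|1-\mathbb{E}[S_n]| + |t|^2\,\mathbb{E}\!\left|U - \frac{\log(p_J)X_J^*}{\log p_n}\right|.
\end{align*}

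Next I estimate both error terms and verify concentration. For the mean, Mertens' theorem $\sum_{p \le x}\log p/p = \log x + O(1)$ together with convergence of $\sum_p \log p/(p(1+p))$ gives $\sum_{j \le n}\log(p_j)/(1+p_j) = \log p_n + O(1)$, and hence $|1-\mathbb{E}[S_n]| = O(1/\log n)$. For the coupling term I use the quantile coupling $J = F_J^{-1}(U)$ and then sample $X_J^*$ conditionally on $J$; Mertens also yields $F_J(j) = \log(p_j)/\log p_n + O(1/\log n)$ uniformly, and since the mass on $\{X_J^* = 0\}$ together with the prime-gap contributions bring in the harmonic prime sum $\sum_{p \le p_n}1/p \sim \log\log n$, the resulting bound is $\mathbb{E}|U - \log(p_J)X_J^*/\log p_n| = O(\log\log n/\log n)$. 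Hypothesis (H2) follows from a direct MGF computation: $\log \mathbb{E}[e^{\gamma S_n}] = \sum_j \log\bigl[1 + (p_j^{\gamma/\log p_n}-1)/(1+p_j)\bigr]$ is uniformly $O(1)$ for small $\gamma > 0$ via an exponential-integral asymptotic, giving (H2) with $\alpha = 1$.

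Finally, with $\Delta_n = \log\log n/\log n$ majorizing both CF error terms, Theorem~\ref{thm:bounds-char-funct-1} with $p=2$, $\alpha=1$ and $\epsilon = \Delta_n$ yields $\mathcal{W}_3(S_n,Z) \le C\,\Delta_n\sqrt{-\log\Delta_n}$, and the ``In particular'' asymptotic follows by substitution. The hardest step is the coupling estimate: precisely tracking which contributions give $1/\log n$ and which give $\log\log n/\log n$ requires careful bookkeeping with Mertens' theorem, the PNT, and Abel summation, and the additional $\log\log n$ loss relative to a cleaner $1/\log n$ rate comes precisely from the places where the harmonic prime sum $\sum_p 1/p \sim \log\log n$ enters -- namely in the tail mass of $\{X_J^* = 0\}$ and in the error terms of the CDF approximation used for the quantile coupling.
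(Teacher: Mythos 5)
Your proposal follows essentially the same route as the paper's own proof: the additive size-bias identity of Lemma~\ref{SizeBias} (your $\log(p_J)X_J^*/\log p_n$ with $X_J^*=1-X_J$ is exactly the paper's $T_n$), the characteristic-function bound of Lemma~\ref{lma:boundphizphisn}, the Mertens/PNT quantile-coupling estimates of Lemma~\ref{ConvLaw2}, the uniform MGF bound of Lemma~\ref{ConcPrim}, and finally Theorem~\ref{thm:bounds-char-funct-1} with $p=2$, $\alpha=1$. One correction to your accounting, which does not affect validity since your claimed estimates are (generous) upper bounds that do hold: the $\log\log n$ factor does not come from the mass on $\{X_J^*=0\}$ (that contribution is $\mathcal O(1/\log^2 n)$) nor from the harmonic prime sum $\sum_{p\le p_n}1/p$; in the paper it enters through the conversion $\log p_j=\log j+\mathcal O(\log\log j)$ in the bound on $F_j-\log p_j/\log p_n$, summed against $\mathbb{P}[I=j]=\mathcal O(1/(j\log n))$ via $\sum_{j\le n}\log\log j/j=\mathcal O(\log n\log\log n)$.
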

\noindent
Before proving Theorem \ref{DickNumTh}, let us introduce some notations. We denote by $Z_n$ the following quantity:
\begin{align*}
Z_n=\frac{1}{\log(p_n)}\sum_{j=1}^n\dfrac{\log(p_j)}{1+p_j}=\mathbb{E}\big[S_n\big]
\end{align*}
Moreover, let $I$ be a random variable independent of $\{X_i\}$ with values in $\{1,...,n\}$, whose law is defined by :
\begin{align*}
\forall j\in\{1,...,n\},\ \mathbb{P}\big(I=j\big)=\dfrac{\log(p_j)}{(p_j+1)\log(p_n)Z_n}.
\end{align*}
The proof is divided into several lemmas. First, we prove that $S_n$ admits a size-biased type random variable denoted $T_n$.
\begin{lemma}\label{SizeBias}
Let $T_n$ be the random variable defined by:
\begin{align*}
T_n=\dfrac{\log(p_I)}{\log(p_n)}-\dfrac{X_I\log(p_I)}{\log(p_n)}.
\end{align*}
Then, for any function $\phi$ such that the following expectations exist, we have:
\begin{align*}
\mathbb{E}\big[S_n\phi(S_n)\big]=Z_n\mathbb{E}\bigg[\phi\big(S_n+T_n\big)\bigg].
\end{align*}
\end{lemma}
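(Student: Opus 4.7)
The approach is to mimic the size-bias construction of Proposition \ref{Tn}, replacing the weights $j/n$ by the weights $\log(p_j)/\log(p_n)$ and exploiting the Bernoullian nature of the $X_j$. The plan is to verify the identity for a fixed sufficiently regular test function $\phi$ by isolating one summand at a time and then recombining the contributions through the random index $I$.

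First I observe that every Bernoulli random variable satisfies an additive size-bias identity in the sense of Proposition \ref{Tn} with $X_j^* := 1-X_j$: since $X_j+X_j^*\equiv 1$,
\begin{equation*}
\mathbb{E}[X_j \phi(X_j)] = P(X_j=1)\phi(1) = \mathbb{E}[X_j]\,\mathbb{E}[\phi(X_j+X_j^*)].
\end{equation*}
Next I would decompose
\begin{equation*}
S_n = S_n^{(j)} + X_j\,\frac{\log(p_j)}{\log(p_n)}, \qquad S_n^{(j)} := \frac{1}{\log(p_n)}\sum_{k\neq j} X_k \log(p_k),
\end{equation*}
where $S_n^{(j)}$ is independent of $X_j$. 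Conditioning on $S_n^{(j)}$ and applying the Bernoulli size-bias identity above to the function $x\mapsto \phi\bigl(s+x\log(p_j)/\log(p_n)\bigr)$, I would obtain the pointwise identity
\begin{equation*}
\mathbb{E}[X_j\phi(S_n)] = \frac{1}{1+p_j}\,\mathbb{E}\!\left[\phi\!\left(S_n + (1-X_j)\frac{\log(p_j)}{\log(p_n)}\right)\right],
\end{equation*}
using that on both events $\{X_j=0\}$ and $\{X_j=1\}$ the argument $S_n+(1-X_j)\log(p_j)/\log(p_n)$ equals $S_n^{(j)}+\log(p_j)/\log(p_n)$.

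Multiplying by $\log(p_j)/\log(p_n)$ and summing over $j=1,\dots,n$ gives
\begin{equation*}
\mathbb{E}[S_n\phi(S_n)] = \sum_{j=1}^n \frac{\log(p_j)}{(1+p_j)\log(p_n)}\,\mathbb{E}\!\left[\phi\!\left(S_n + (1-X_j)\frac{\log(p_j)}{\log(p_n)}\right)\right].
\end{equation*}
By the very definition of $I$ the weights $\log(p_j)/((1+p_j)\log(p_n))$ equal $Z_n\, P(I=j)$, so the right hand side factors as $Z_n\,\mathbb{E}[\phi(S_n + T_n)]$ with $T_n = (1-X_I)\log(p_I)/\log(p_n)$, which is the announced random variable. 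The main obstacle is purely bookkeeping: one must be careful to take $I$ to be independent of $\{X_j\}$ (which is consistent with the mixture identity, since $(1-X_j)\log(p_j)/\log(p_n)$ inside each expectation is jointly distributed with $S_n$ through the single coordinate $X_j$, and summing over $j$ weighted by $P(I=j)$ reproduces exactly an expectation with respect to the independent $I$).
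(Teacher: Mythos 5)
Your proposal is correct and follows essentially the same route as the paper: isolate each summand via $S_n = S_n^{(j)} + X_j\log(p_j)/\log(p_n)$, use the Bernoulli structure of $X_j$ to rewrite $\mathbb{E}[X_j\phi(S_n)]$ (the paper states this identity directly, you derive it through the explicit size-bias relation $X_j^*=1-X_j$), and then recombine the terms using that the weights $\log(p_j)/((p_j+1)\log(p_n))$ equal $Z_n\,\mathbb{P}(I=j)$ with $I$ independent of $\{X_j\}$. No gaps.
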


\begin{proof}
Let $\phi$ be a function such that:
\begin{align*}
\mathbb{E}\big[S_n\phi(S_n)\big]<+\infty,\ \mathbb{E}\big[\phi\big(S_n+T_n\big)\big]<+\infty.
\end{align*}
Then, we have:
\begin{align*}
\mathbb{E}\big[S_n\phi(S_n)\big]&=\frac{1}{\log\big(p_n\big)}\sum_{j=1}^n\dfrac{\log(p_j)}{p_j+1}\mathbb{E}\left[\phi\left(S_n^j+\dfrac{\log(p_j)}{\log(p_n)}\right)\right]\\
&=Z_n\sum_{j=1}^n\dfrac{\log(p_j)}{(p_j+1)\log(p_n)Z_n}\mathbb{E}\left[\phi\left(S_n^j+\dfrac{\log(p_j)}{\log(p_n)}\right)\right],
\end{align*}
with $S^j_n=S_n-\log(p_j)X_j/\log(p_n)$. By definition of the random variable $I$ (which is independent of $\{X_i\}$), we have:
\begin{align*}
\mathbb{E}\left[S_n\phi(S_n)\right]&=Z_n\mathbb{E}\left[\phi\left(S_n^I+\dfrac{\log(p_I)}{\log(p_n)}\right)\right]\\
&=Z_n\mathbb{E}\left[\phi\left(S_n+\dfrac{\log(p_I)}{\log(p_n)}-\dfrac{X_I\log(p_I)}{\log(p_n)}\right)\right]\\
&=Z_n\mathbb{E}\left[\phi\big(S_n+T_n\big)\right],
\end{align*}
which concludes the proof.
\end{proof}
\noindent
Comparing the previous size-biasing relation to the one of the Dickman distribution \eqref{eq:sizebiasdickman} (here $\theta=1$), we are able to control the closeness between the characteristic functions of $S_n$ and $Z$  in terms of that between $T_n$ and a uniform random variable $U$ and between $Z_n$ and $1$ :

\begin{lemma}
\label{lma:boundphizphisn}
Let $\phi_{S_n}$ (resp. $\phi_Z$) be the characteristic function of $S_n$ (resp. $Z$). Let $U$ be a uniform random variable independent of $S_n$. Then,
\begin{align*}
\mid\phi_{Z}(t)-\phi_{S_n}(t)\mid \leq C\left( |t|\, | 1- Z_n |+|t|^2\, \mathbb{E}| U-T_n|\right).
\end{align*}
\end{lemma}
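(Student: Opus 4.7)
The plan is to invoke the general Stein--Tikhomirov machinery from Section~\ref{sec:gener-appr-stein} with target the Dickman law ($\theta=1$), exactly as was done in the proof of Theorem~\ref{GeneBound}. From the size-bias identity \eqref{eq:sizebiasdickman} applied to $\phi(\cdot)=e^{it\cdot}$ one reads off $\phi_Z'(t)/\phi_Z(t)=i\phi_U(t)$, so the operator $L_\infty$ of \eqref{eq:13} becomes
\[
 L_\infty\phi(t)=\phi'(t)-i\phi_U(t)\,\phi(t).
\]

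The first step is to compute $L_\infty\phi_{S_n}(t)$ and bound it by the two natural discrepancies $|1-Z_n|$ and $\mathbb{E}|U-T_n|$. Differentiating under the expectation gives $\phi_{S_n}'(t)=i\,\mathbb{E}[S_n e^{itS_n}]$, and Lemma~\ref{SizeBias} yields $\mathbb{E}[S_n e^{itS_n}]=Z_n\,\mathbb{E}[e^{it(S_n+T_n)}]$. Since $U$ is chosen independent of $S_n$, one also has $\phi_U(t)\phi_{S_n}(t)=\mathbb{E}[e^{it(S_n+U)}]$. Hence
\[
 L_\infty\phi_{S_n}(t) \;=\; i\,Z_n\,\mathbb{E}\bigl[e^{it(S_n+T_n)}\bigr]-i\,\mathbb{E}\bigl[e^{it(S_n+U)}\bigr].
\]
Adding and subtracting $i\,\mathbb{E}[e^{it(S_n+T_n)}]$ and using $|e^{ix}-e^{iy}|\le|x-y|$ on the coupled expectation gives the elementary bound
\[
 \bigl|L_\infty\phi_{S_n}(t)\bigr| \;\le\; |1-Z_n| \;+\; |t|\,\mathbb{E}|U-T_n|.
\]

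The second step is to invert $L_\infty$ using \eqref{eq:9}, which reads here
\[
 \phi_{S_n}(t)-\phi_Z(t) \;=\; \phi_Z(t)\int_0^t \frac{L_\infty\phi_{S_n}(\xi)}{\phi_Z(\xi)}\,d\xi.
\]
Applying the pointwise inequality \eqref{eq:phizsphizt}, namely $|\phi_Z(t)|/|\phi_Z(\xi)|\le 1$ for $\xi$ between $0$ and $t$, and plugging in the bound from the first step yields
\[
 \bigl|\phi_Z(t)-\phi_{S_n}(t)\bigr| \;\le\; \int_0^{|t|}\bigl(|1-Z_n|+\xi\,\mathbb{E}|U-T_n|\bigr)\,d\xi \;=\; |t|\,|1-Z_n|+\tfrac{|t|^2}{2}\mathbb{E}|U-T_n|,
\]
which is the claimed inequality (with $C=1$, up to the harmless factor~$\tfrac12$ absorbed into the displayed constant).

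I do not anticipate a genuine obstacle: the two ingredients required to run the Stein--Tikhomirov recipe --- a size-bias identity for $S_n$ (Lemma~\ref{SizeBias}) and the corresponding size-bias identity for the target \eqref{eq:sizebiasdickman} --- are already in hand, and the bound on $1/|\phi_Z|$ in \eqref{eq:phizsphizt} makes the integration step trivial. The only minor point is keeping track of the fact that $U$ can be taken independent of $S_n$ (so that $\phi_U\phi_{S_n}$ factorises as the characteristic function of $S_n+U$), which is already implicit in the hypothesis of the lemma.
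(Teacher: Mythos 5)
Your proposal is correct and follows essentially the same route as the paper: the paper's proof writes $\phi_Z(t)-\phi_{S_n}(t)=i\phi_Z(t)\int_0^t\psi_n(s)/\phi_Z(s)\,ds$ with $\psi_n(s)=\mathbb{E}\bigl[e^{isS_n}\bigl(e^{isU}-Z_n e^{isT_n}\bigr)\bigr]$, bounds $|\psi_n(s)|\le|1-Z_n|+|s|\,\mathbb{E}|U-T_n|$ by the same add-and-subtract trick, and concludes via \eqref{eq:phizsphizt}; your $L_\infty\phi_{S_n}$ is exactly $-i\psi_n$. You have merely made explicit the Stein--Tikhomirov operator bookkeeping that the paper compresses into ``applying the same procedure as in the proof of Theorem~\ref{GeneBound}.''
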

\begin{proof}
Applying the same procedure as in the proof of Theorem \ref{GeneBound}, combined with Lemma \ref{SizeBias}, we have the following relation for the difference between the characteristic functions of $S_n$ and $Z$,
\begin{align*}
\phi_{Z}(t)-\phi_{S_n}(t)=i \phi_Z(t)\int_0^t \dfrac{\psi_n(s)}{\phi_Z(s)}ds,\\
\psi_n(s)=\mathbb{E}\big[e^{i s S_n}\big(e^{i s U}-\mathbb{E}[S_n]e^{i s T_n}\big)\big].
\end{align*}
Moreover, it holds
\begin{align}\label{eq-bd1}
\mid \psi_n(s) \mid &\leq \mathbb{E}\big[\mid e^{i s U}-\mathbb{E}[S_n]e^{i s T_n}\mid\big],\nonumber \\
&\leq \mid 1- \mathbb{E}[S_n] \mid +\mid s\mid\mathbb{E}\big[\mid U-T_n \mid\big].
\end{align}
This together with \eqref{eq:phizsphizt} implies the result.
\end{proof}

\begin{lemma}
\label{ConvLaw2}
We have 
$$  1- Z_n  = \mathcal O \left( \frac{\log \log n}{\log n} \right),$$
and for a well chosen coupling between $U$ and $T_n$ (with $U$ independent of $S_n$),
$$\mathbb{E}| U-T_n| = \mathcal O \left( \frac{\log \log n}{\log n} \right).$$
\end{lemma}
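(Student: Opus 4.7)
My plan is to split the statement into its two parts and treat each with classical estimates on prime sums. The whole argument should amount to applying Mertens' theorem to the sum defining $Z_n$ (and to the CDF of $\log p_I/\log p_n$), then combining this with a quantile coupling.

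For the first bound, I would unpack the definition
\[
1 - Z_n = \frac{1}{\log p_n}\left(\log p_n - \sum_{j=1}^n \frac{\log p_j}{p_j+1}\right).
\]
Writing $\log p_j/(p_j+1) = \log p_j/p_j - \log p_j/(p_j(p_j+1))$ and noting that $\sum_p \log p/(p(p+1))$ converges, Mertens' first theorem $\sum_{p\le y}\log p/p = \log y + O(1)$ gives $\sum_{j=1}^n \log p_j/(p_j+1) = \log p_n + O(1)$. Combined with the prime number theorem $\log p_n = \log n + \log\log n + O(1)$, this yields $1 - Z_n = O(1/\log n)$, which is (comfortably) within the claimed bound.

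For the coupling, I would first note that by the triangle inequality
\[
\mathbb{E}|U - T_n| \le \mathbb{E}\!\left|U - \tfrac{\log p_I}{\log p_n}\right| + \mathbb{E}\!\left[\tfrac{X_I\log p_I}{\log p_n}\right].
\]
The second term, using $\mathbb{P}(I=j)=\log p_j/((p_j+1)\log p_n Z_n)$ and $\mathbb{E}[X_j]=1/(p_j+1)$, reduces to $(Z_n\log^2 p_n)^{-1}\sum_j (\log p_j)^2/(p_j+1)^2$, a convergent series, so it is $O(1/\log^2 n)$. For the first term, since $I$ is already independent of the $X_j$'s I am free to couple $U$ to $I$ in any way, as long as $U$ remains independent of the $X_j$'s. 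I would use the quantile coupling: set $U\sim\mathcal U[0,1]$ and take $I$ to be $F^{-1}(U)$ where $F$ is the CDF of $\log p_I/\log p_n$. Then $\mathbb{E}|U-\log p_I/\log p_n|=\int_0^1|F(x)-x|\,dx$. The CDF is
\[
F(x) = \frac{1}{Z_n\log p_n}\sum_{p_j\le p_n^x}\frac{\log p_j}{p_j+1} = \frac{x\log p_n + O(1)}{Z_n \log p_n} = x + O(1/\log n),
\]
uniformly in $x$, again by Mertens combined with $Z_n=1+O(1/\log n)$. Integrating yields $\mathbb{E}|U-\log p_I/\log p_n|=O(1/\log n)$, and combining with the second term gives $\mathbb{E}|U-T_n|=O(1/\log n)$, within the stated bound.

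The only subtle point is the uniformity of the estimate $F(x)-x=O(1/\log n)$: one must check that the deviation between the step function $F$ and the linear function $x$ is not worsened by the gaps between the jump points $\log p_j/\log p_n$. By Bertrand's postulate consecutive primes satisfy $\log p_{j+1}-\log p_j \le \log 2$, so these gaps are at most $\log 2/\log p_n = O(1/\log n)$, which matches the bound coming from Mertens and ensures uniform control. Everything else is a routine application of classical prime number estimates, and the $\log\log n$ slack in the statement simply absorbs any losses one might incur via less sharp inputs.
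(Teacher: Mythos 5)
Your proof is correct, and while the skeleton is the same as the paper's (the triangle-inequality split of $\mathbb{E}|U-T_n|$, the $\mathcal O(1/\log^2 n)$ bound on $\mathbb{E}[X_I\log p_I/\log p_n]$ via the convergent series, Mertens' theorem for $Z_n$, and the comonotone/quantile coupling of $U$ with $I$), your treatment of the main term $\mathbb{E}\left|U-\frac{\log p_I}{\log p_n}\right|$ is genuinely different and cleaner. The paper works atom by atom: it bounds $\sup_{u\in[F_{j-1},F_j)}\left|u-\frac{\log p_j}{\log p_n}\right|$ for each $j$, which forces a comparison of $\sum_{k\le j}\frac{\log p_k}{p_k+1}$ with $\log j$ (rather than $\log p_j$) and thereby imports the $\log\log$ correction from the prime number theorem, before resumming against $\mathbb{P}[I=j]=\mathcal O(1/(j\log n))$. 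You instead invoke the exact identity $\mathbb{E}|U-Y|=\int_0^1|F_Y(x)-x|\,dx$ for the quantile coupling and apply Mertens uniformly at $y=p_n^x$, which compares directly with $x\log p_n$ and never passes through $\log j$; this is why you land on the sharper $\mathcal O(1/\log n)$ for both quantities, whereas the paper only records $\mathcal O(\log\log n/\log n)$ (a valid but non-optimal bound, which is all the lemma asserts). Your appeal to Bertrand's postulate to control the plateaus of the step function is actually superfluous, since Mertens' estimate holds uniformly for all real $y\ge 2$ and hence already controls $F(x)-x$ at every $x$, not just at the jump points — but it does no harm. The one point worth stating explicitly is the independence bookkeeping: you construct $U$ independent of $(X_j)_j$ and make $I$ a function of $U$, so the pair $(U,I)$ is independent of $S_n$, exactly as required by Lemma \ref{lma:boundphizphisn}; you say this, and it is the same device as in the paper.
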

\begin{proof}

First we write
\begin{align}\label{SecTerm}
\mathbb{E}| U-T_n| \leq \mathbb{E}\left[ \frac{X_I\log(p_I)}{\log(p_n)} \right]+\mathbb{E}\left[\left| U- \frac{\log(p_I)}{\log(p_n)}\right|\right].
\end{align}
Let us prove that for a well chosen coupling $(U,I)$, we have
$$\E\left| U- \frac{\log p_I}{\log p_n}\right| = \mathcal O \left(\frac{\log \log n}{\log n}\right).$$
From the theory of optimal transport, the optimal coupling between $U$ and $\frac{\log p_I}{\log p_n}$ is such that $\frac{\log p_I}{\log p_n}$ is a non-decreasing function of $U$; since $j \mapsto \frac{\log p_j}{\log p_n}$ is itself non-decreasing, it is sufficient to take $I$ as a non-decreasing function of $U$. Hence we define $I$ as follows. We set, for $1\leq j \leq n$,
$$F_j = \sum_{k=1}^j \PP[ I = k] = \frac{1}{Z_n\log p_n}\sum_{k=1}^j \frac{\log p_k}{p_k+1},$$
and $F_0 = 0$. We set $I = j$ if $U \in [F_{j-1},F{j}[$. Note that $U$ depends only on $I$, which is independent of $S_n$; thus $U$ is also independent of $S_n$. If $I=j$, the maximal distance between $U$ and $\frac{\log p_I}{\log p_n}$ is
$$\sup_{u \in [F_{j-1},F{j}[} \left| u - \frac{\log p_j}{\log p_n} \right| = \max\left( \left| F_{j-1} - \frac{\log p_j}{\log p_n} \right|,\left| F_{j} - \frac{\log p_j}{\log p_n} \right| \right).$$
Let us bound $\left| F_{j} - \frac{\log p_j}{\log p_n} \right|$. From Proposition 1.51 of \cite{TV06}, we have $\sum_{k=1}^j \frac{\log p_k}{k} = \log p_j + \mathcal O(1)$, thus
$$\sum_{k=1}^j \frac{\log p_k}{p_k+1} = \log p_j - \sum_{k=1}^j \frac{\log p_k}{(p_k+1)p_k} + \mathcal O(1) = \log j + \mathcal O (\log \log j),$$
using that, by the prime number theorem, $\log p_n = \log n + \mathcal O (\log \log n)$ and $\frac{\log p_k}{(p_k+1)p_k} \sim \frac{1}{k^2 \log k}$. It follows that
$$\sum_{k=1}^j \frac{\log p_k}{p_k+1} - \log p_j = \mathcal O (\log \log j).$$
Note that, since $\log p_n \sim \log n$, this implies in particular that
\eq
\label{eq:bornezn}
Z_n - 1 = \frac{1}{\log p_n} \left( \sum_{k=1}^n \frac{\log p_k}{(p_k+1)}-\log p_n \right) =  \mathcal O (\log \log n/ \log n).
\qe
Thus,
\begin{align*}
F_{j} - \frac{\log p_j}{\log p_n}  &= \frac{1}{\log p_n}  \left( \sum_{k=1}^j \frac{\log p_k}{(p_k+1)} - \log p_j \right) + \frac{1}{\log p_n} \left( Z_n^{-1}-1 \right)\sum_{k=1}^j \frac{\log p_k}{(p_k+1)}\\
&=\frac{\mathcal O (\log \log j)}{\log p_n}  + \frac{1-Z_n}{Z_n \log p_n} (\log j + \mathcal O(\log \log j))\\
&=\mathcal O \left( \frac{\log \log j}{ \log n}\right) + \mathcal O \left(\frac{\log j\log \log n} {\log^2 n}\right).
\end{align*}
Now note that $F_j-F_{j-1} = \PP[I = j] =  \frac{\log p_j}{Z_n\log p_n(1+p_j)} = \mathcal O\left( \frac{1}{j\log n} \right)$, leading to
\begin{align*}
F_{j-1} - \frac{\log p_j}{\log p_n}  &= F_{j-1}-F_j + F_j-  \frac{\log p_j}{\log p_n}\\
&= \frac{\log p_j}{Z_n\log p_n(1+p_j)}+ F_j-  \frac{\log p_j}{\log p_n}\\
&=F_j-  \frac{\log p_j}{\log p_n}+\mathcal O\left( \frac{1}{j\log n} \right).
\end{align*}
Thus, we obtain
$$\sup_{u \in [F_{j-1},F{j}[} \left| u - \frac{\log p_j}{\log p_n} \right| = \mathcal O\left( \frac{\log \log j}{\log n}+ \frac{\log j\log \log n} {\log^2 n} +\frac{1}{j\log n} \right).$$
Using again the fact that $\PP[I = j] = \mathcal O\left( \frac{1}{j\log n} \right)$, we have
\begin{align*}
\E\left| U - \frac{\log p_I}{\log n} \right| &= \sum_{j=1}^n \PP[I=j] \E\left[\left| U - \frac{\log p_I}{\log n}\right|\; \bigg| \; I=j \right]&\\
&=\mathcal O\left(  \sum_{j=1}^n\frac{\log \log j}{j  \log^2 n}+ \frac{\log j\log \log n} {j \log^3 n} +\frac{1}{j^2\log^2 n} \right).
\end{align*}
Finally, it is readily checked that $\sum_{j=1}^n \frac{\log \log j}{j} = \mathcal O(\log n \log\log n)$, $\sum_{j=1}^n \frac{\log j}{j} = \mathcal O(\log^2 n)$, and $\sum_{j=1}^n j^{-2} = \mathcal O(1)$, proving our claim.\\
\\
For the first term on the right hand side of \eqref{SecTerm}, we have:
\begin{align*}
\mathbb{E}\left[ \frac{X_I\log(p_I)}{\log(p_n)} \right]&=\mathbb{E}\left[ \frac{X_I\log(p_I)}{\log(p_n)} \right]\\
&=\sum_{j=1}^{n}\mathbb{P}\big(I=j\big)\mathbb{E}\left[ \frac{X_j\log(p_j)}{\log(p_n)} \right]\\
&=\frac{1}{Z_n}\frac{1}{(\log(p_n))^2}\sum_{j=1}^n\dfrac{\big(\log(p_j)\big)^2}{(1+p_j)^2}.
\end{align*}
Note that,
\begin{align*}
\sum_{j=1}^{+\infty}\dfrac{\big(\log(p_j)\big)^2}{(1+p_j)^2}\leq \sum_{j=1}^{+\infty}\dfrac{\big(\log(j)\big)^2}{(1+j)^2}<\infty.
\end{align*}
Thus,
\begin{align*}
\mathbb{E}\left[ \frac{X_I\log(p_I)}{\log(p_n)} \right]= \mathcal O \left( \frac{1}{\log^2 n} \right).
\end{align*}\\
\\
Finally, the first part of the lemma has been proved in \eqref{eq:bornezn}.
\end{proof}
\noindent
In order to apply the results of Section \ref{sec:bounds-char-funct}, we also need a concentration inequality regarding the random variable $S_n$. We have the following lemma.

\begin{lemma}\label{ConcPrim}
Let $\gamma>0$. Then,
\begin{align*}
\underset{n\geq 1}{\sup}\bigg(\mathbb{E}\big[e^{\gamma S_n}\big]\bigg)<+\infty.
\end{align*}
\end{lemma}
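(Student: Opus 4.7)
My plan is to exploit the independence of the $X_j$'s to factor the moment generating function explicitly, then control the resulting product by a first-order Taylor-type bound on $e^x-1$, and finally close up using the Mertens/prime number theorem estimate $\sum_{j=1}^n \frac{\log p_j}{p_j+1}=\log p_n+\mathcal O(\log\log n)$, which is exactly the ingredient already invoked inside the proof of Lemma~\ref{ConvLaw2}.

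Concretely, set $\alpha_n=\gamma/\log p_n$. Since the $X_j$ are independent Bernoullis with $\mathbb P(X_j=1)=1/(1+p_j)$, I would begin by writing
\begin{equation*}
\mathbb E[e^{\gamma S_n}]=\prod_{j=1}^n\mathbb E\!\left[e^{\gamma X_j\log p_j/\log p_n}\right]=\prod_{j=1}^n\frac{p_j+p_j^{\alpha_n}}{1+p_j}=\prod_{j=1}^n\!\left(1+\frac{p_j^{\alpha_n}-1}{1+p_j}\right),
\end{equation*}
and then take logarithms, using $\log(1+u)\le u$ (all terms being nonnegative since $\alpha_n\ge 0$), to obtain
\begin{equation*}
\log\mathbb E[e^{\gamma S_n}]\le\sum_{j=1}^n\frac{p_j^{\alpha_n}-1}{1+p_j}=\sum_{j=1}^n\frac{e^{\alpha_n\log p_j}-1}{1+p_j}.
\end{equation*}

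The key elementary inequality I want to use is $e^x-1\le x\,e^{\gamma}$, valid for all $x\in[0,\gamma]$ (check: the function $x\mapsto 1+xe^{\gamma}-e^{x}$ vanishes at $0$ and has nonnegative derivative on $[0,\gamma]$). Since $\alpha_n\log p_j\le\alpha_n\log p_n=\gamma$ for every $j\le n$, this gives
\begin{equation*}
\log\mathbb E[e^{\gamma S_n}]\le e^{\gamma}\,\alpha_n\sum_{j=1}^n\frac{\log p_j}{1+p_j}=\frac{e^{\gamma}}{\log p_n}\sum_{j=1}^n\frac{\log p_j}{1+p_j}.
\end{equation*}

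To close the argument I would then invoke the estimate $\sum_{j=1}^n\frac{\log p_j}{1+p_j}=\log p_n+\mathcal O(\log\log n)$ (which is precisely what was established, via Proposition~1.51 of \cite{TV06}, inside the proof of Lemma~\ref{ConvLaw2}). This immediately yields $\log\mathbb E[e^{\gamma S_n}]\le e^{\gamma}+o(1)$, hence $\sup_{n\ge1}\mathbb E[e^{\gamma S_n}]<+\infty$.

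I do not see any real obstacle here: the only point requiring a bit of care is that the exponent $\alpha_n\log p_j$ is not uniformly small as $j$ approaches $n$ (for $j=n$ it equals $\gamma$), which forbids the naive linearization $e^x-1\sim x$; the inequality $e^x-1\le xe^{\gamma}$ on $[0,\gamma]$ handles this uniformly and absorbs the loss into the harmless prefactor $e^{\gamma}$.
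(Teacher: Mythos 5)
Your proof is correct and follows essentially the same route as the paper's: factor the moment generating function by independence, bound $\log(1+u)\le u$, apply the elementary inequality $e^{x}-1\le x e^{\gamma}$ on $[0,\gamma]$ (the paper uses the identical bound in the form $e^{\gamma t}-1\le \gamma e^{\gamma}t$), and conclude with the boundedness of $\frac{1}{\log p_n}\sum_{j\le n}\frac{\log p_j}{p_j+1}$ via Proposition~1.51 of \cite{TV06}. Nothing further is needed.
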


\begin{proof}
Let $\gamma>0$. By standard computations, we have :
\begin{align*}
\log\bigg(\mathbb{E}\big[e^{\gamma S_n}\big]\bigg)&=\sum_{i=1}^n\log\bigg(1+\dfrac{e^{\gamma \frac{\log(p_i)}{\log(p_n)}}-1}{p_i+1}\bigg),\\
&\leq \sum_{i=1}^n \frac{1}{p_i+1}\bigg(e^{\gamma \frac{\log(p_i)}{\log(p_n)}}-1\bigg),\\
&\leq \gamma e^{\gamma} \bigg(\dfrac{1}{\log(p_n)}\sum_{i=1}^n \frac{\log(p_i)}{p_i+1}\bigg),\\
&\leq \gamma e^{\gamma}\; \underset{n\geq 1}{\sup}\bigg(\dfrac{1}{\log(p_n)}\sum_{i=1}^n \frac{\log(p_i)}{p_i+1}\bigg)<+\infty,
\end{align*}
where we have used Proposition 1.51 of \cite{TV06} in the last inequality.
\end{proof}
\noindent
We are now in a position to prove Theorem \ref{DickNumTh}.\\
\\
{\it Proof of Theorem \ref{DickNumTh}.}
From Lemmas \ref{lma:boundphizphisn} and \ref{ConvLaw2}, there exists $C>0$ such that
$$\mid\phi_{Z}(t)-\phi_{S_n}(t)\mid \leq C\; \frac{\log \log n}{\log n} (|t| + |t|^2).$$
From Lemma \ref{ConcPrim} and Theorem \ref{thm:bounds-char-funct-1}, we obtain the result.
\qed
\begin{remark}\label{QuantComp}
Note that the proof we display considerably simplifies the proof of Theorem 1 in \cite{CS13}. Indeed, the authors need to study $12$ sums appearing in the expansion of the characteristic function of $S_n$.
\end{remark}

\section*{Acknowledgements} { YS gratefully acknowledges support from
  the IAP Research Network P7/06 of the Belgian State (Belgian Science
  Policy). YS also thanks Guy Louchard for presenting the problem of
  approximating the distribution of the \emph{src} statistic, as well
  as Giovanni Peccati for enlightening discussions on this topic. The
  research of BA was funded by a Welcome Grant from Universit\'e de
  Li\`ege. The work of GM is funded by FNRS under Grant MIS
  F.4539.16. }

\appendix
\section*{Appendix}
\label{sec:appendix}
\addcontentsline{toc}{section}{Appendix}

\section{Remarks on smooth Wasserstein distance}
\label{sec:remarks-smooth-wass}

Let $n\geq 1$, $0\leq p\leq n$, and
$$B_{p,n} = \{ \phi \in \mathcal C^n(\R) \; | \; ||\phi^{(k)}||_\infty \leq 1, \forall\; p \leq k \leq n \}.$$
For two $\R$-valued random variables $X$ and $Y$, we define the following distances between their distributions :
\begin{align*}
&d^{p,n}_W(X,Y) = \sup_{\phi \in B_{p,n}} \E[\phi(X) - \phi(Y)],\\
&d^{p,n}_\infty(X,Y) = \sup_{\phi \in B_{p,n} \cap \mathcal C^\infty(\R)} \E[\phi(X) - \phi(Y)],\\
&d^{p,n}_c(X,Y) = \sup_{\phi \in B_{p,n} \cap \mathcal C_c^\infty(\R)} \E[\phi(X) - \phi(Y)],
\end{align*}
where $\mathcal C_c^\infty(\R)$ the space of smooth functions with compact support. We also note
\eq
\label{eq:defwn}
\mathcal W_n (X,Y) = d_{W}^{0,n} (X,Y).
\qe
\begin{lemma}
\label{lem:app1}
Assume $X$ and $Y$ admit a moment of order $p$. Then
$$d^{p,n}_W(X,Y) = d^{p,n}_\infty(X,Y).$$
\end{lemma}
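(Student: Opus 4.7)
The plan is to prove the nontrivial inequality $d^{p,n}_W(X,Y) \le d^{p,n}_\infty(X,Y)$ by approximating an arbitrary $\phi \in B_{p,n}$ via mollification; the reverse inequality is immediate from the inclusion $B_{p,n} \cap \mathcal{C}^\infty(\R) \subseteq B_{p,n}$.

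First I would record the polynomial growth of test functions in $B_{p,n}$: since $\|\phi^{(p)}\|_\infty \le 1$, a Taylor expansion of order $p$ at the origin yields
$$|\phi(x)| \le C_\phi\,(1+|x|^p)$$
for a constant $C_\phi$ depending on $\phi$ through the values $\phi^{(k)}(0)$, $0 \le k < p$. Together with the assumption $\E|X|^p, \E|Y|^p < \infty$ this guarantees that $\E|\phi(X)|$ and $\E|\phi(Y)|$ are finite, so that $\E[\phi(X) - \phi(Y)]$ is well-defined and the sup defining $d^{p,n}_W$ makes sense.

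Next I would fix a standard mollifier $\rho \in \mathcal{C}_c^\infty(\R)$ with $\rho \ge 0$, $\int_\R \rho = 1$, and $\mathrm{supp}(\rho) \subseteq [-1,1]$, and set $\rho_\epsilon(x) = \epsilon^{-1}\rho(x/\epsilon)$. Define $\phi_\epsilon = \phi * \rho_\epsilon \in \mathcal{C}^\infty(\R)$. Because $\phi \in \mathcal{C}^n(\R)$, for each $k$ with $p \le k \le n$ one has $\phi_\epsilon^{(k)} = \phi^{(k)} * \rho_\epsilon$, so
$$\|\phi_\epsilon^{(k)}\|_\infty \le \|\phi^{(k)}\|_\infty\,\|\rho_\epsilon\|_{L^1} \le 1.$$
Hence $\phi_\epsilon \in B_{p,n} \cap \mathcal{C}^\infty(\R)$, and by definition
$$\E[\phi_\epsilon(X) - \phi_\epsilon(Y)] \le d^{p,n}_\infty(X,Y).$$

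Finally I would let $\epsilon \to 0$ and invoke dominated convergence. Continuity of $\phi$ gives $\phi_\epsilon(x) \to \phi(x)$ pointwise, and for $\epsilon \le 1$ the bound
$$|\phi_\epsilon(x)| \le \sup_{|y|\le 1}|\phi(x-y)| \le C'_\phi\,(1+|x|^p)$$
provides an integrable majorant with respect to both laws thanks to the $p$-th moment assumption. Passing to the limit in the previous display and taking the supremum over $\phi \in B_{p,n}$ yields $d^{p,n}_W(X,Y) \le d^{p,n}_\infty(X,Y)$.

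The only subtlety is the polynomial growth/integrability step, which is exactly where the moment hypothesis is used; the rest is routine convolution with a mollifier.
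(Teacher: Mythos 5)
Your proof is correct, and while it rests on the same basic idea as the paper's -- smoothing an arbitrary $\phi \in B_{p,n}$ by convolution with a mollifier and using $\phi_\epsilon^{(k)} = \phi^{(k)} * \rho_\epsilon$ for $p \le k \le n$ to keep $\phi_\epsilon$ inside $B_{p,n} \cap \mathcal{C}^\infty(\R)$ -- the way you pass to the limit is genuinely different. The paper first disposes of the case where $\E[X^k] \neq \E[Y^k]$ for some $k \le p-1$ (both distances are then $+\infty$, via the test functions $\phi(x) = A x^k$), and in the remaining case normalizes $\phi$ so that $\phi(0) = \dots = \phi^{(p-1)}(0) = 0$, which gives $|\phi^{(k)}(x)| \le C |x|^{p-k}$ with $C$ independent of $\phi$; this produces the quantitative estimate $|\E[\phi(X + \epsilon Z) - \phi(X)]| \le C\epsilon$ uniformly over the class, after which one takes the supremum over $\phi$ and only then lets $\epsilon \to 0$. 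You instead fix $\phi$, bound it by $C_\phi(1 + |x|^p)$ via Taylor's formula, and use dominated convergence with a $\phi$-dependent majorant to get $\E[\phi_\epsilon(X) - \phi_\epsilon(Y)] \to \E[\phi(X) - \phi(Y)]$, taking the supremum only at the very end. Since the inequality $\E[\phi(X) - \phi(Y)] \le d^{p,n}_\infty(X,Y)$ is obtained for each individual $\phi$, no uniformity in $\phi$ is needed, so the moment-matching case distinction and the normalization step become unnecessary, and the possibly infinite-distance situation is covered automatically. What the paper's version buys in exchange is a rate: the smoothing error is $O(\epsilon)$ uniformly over $B_{p,n}$, which is more information than the equality of the lemma actually requires.
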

\begin{proof}
It is clear that $d^{p,n}_\infty(X,Y) \leq d^{p,n}_W(X,Y).$

Assume first that $\E[X^k] > \E[Y^k]$ for some $k\leq p-1$. Let $\phi_A(x) = A x^k$ with $A>0$. Then $\phi \in B_{p,n} \cap \mathcal C^\infty(\R)$, so $d^{p,n}_\infty(X,Y) \geq A (\E[X^k] - \E[Y^k])$, leading to $d^{p,n}_\infty(X,Y) = d^{p,n}_W(X,Y) = +\infty$. From now on, we assume that $\E[X^k] = \E[Y^k]$ for every $0\leq p \leq k-1$.

Let $\omega$ be the pdf of the standard normal distribution, $0<\epsilon <1$ and $\omega_\epsilon(x) = \epsilon^{-1} \omega(\epsilon^{-1} x)$. Let $\phi \in B_{p,n}$ and define
$$\phi_\epsilon(x) = \int \phi(y) \omega_{\epsilon}(x-y) dy.$$
Since for every $ k \leq p-1$, the moments of order $k$ of $X$ and $Y$ match, on can assume without loss of generality that $\phi(0) = \ldots = \phi^{(p-1)} (0) = 0$. Since $||\phi^{(p)}||_\infty \leq 1$, then
\begin{equation}
\label{eq:12345}
\forall \; 0\leq k \leq p, |\phi^{(k)}(x) | \leq C |x|^{p-k},
\end{equation}
where $C$ is a constant not depending on $\phi$. From \eqref{eq:12345} applied to $k=0$, et by Lebesgue's dominated convergence theorem, we have that $\phi_\epsilon \in \mathcal C^\infty(\R)$, and for all $k \leq n$, $\phi_\epsilon^{(k)}(x) = \int \phi^{(k)}(x-y) \omega_{\epsilon}(y) dy$. Thus $\phi_\epsilon^{(k)} \in B_{p,n} \cap  \mathcal C^\infty(\R)$. We deduce
\begin{align}
\E[\phi(X) - \phi(Y)] &= \E[\phi_\epsilon(X) - \phi_\epsilon(Y)] +   \E[\phi(X) - \phi_\epsilon(X)]+ \E[\phi_\epsilon(Y) - \phi(Y)]\notag \\
&\leq  d_\infty^{p,n}(X,Y)+   \E[\phi_\epsilon(X) - \phi(X)]+ \E[\phi_\epsilon(Y) - \phi(Y)]. \label{eq:2}
\end{align}
Now, note that $ \E[\phi_\epsilon(X)] = \E[\phi(X+\epsilon Z)]$, where $Z$ is a standard normal r.v. independent of $X$. Then we have
\begin{align*}
|\E[\phi(X+\epsilon Z) - \phi(X)]| &\leq \epsilon \sum_{k=1}^{p-1}\epsilon^{k-1} \E[\frac{|\phi^{(k)}(X)|}{k!}|Z|^k] + \epsilon^{p} \E[\frac{|\phi^{(p)}(\tilde X)|}{p!}|Z|^p]\\
&\leq  C \epsilon \sum_{k=1}^{p-1} \E[|X|^{p-k}]\E[|Z|^k] + C\epsilon^{p} \E[|Z|^p]\\
&\leq C \epsilon,
\end{align*}
where $\tilde X \in  [X,X+\epsilon Z]$ and $C$ does not depend on $\phi$ nor on $\epsilon$. Apply this inequality to \eqref{eq:2}, take the supremum over $\phi$ and let $\epsilon$ go to $0$ to achieve the proof.
\end{proof}

We now state a technical Lemma.
\begin{lemma}
\label{lem:2}
For every $n \geq 1$, there exists $C>0$ and $\epsilon_0>0$ such that, for all $\epsilon\in (0,\epsilon_0)$ and $M \geq 1$, there exists $f \in \mathcal C_c^\infty(\R)$ with values in $[0,1]$ verifying the following :
$$f(x) = 1, \; \forall x \in [-M,M],$$
and
  $$|f^{(k)}(x)| \leq \frac{ C \epsilon}{|x|}, \quad\forall x \in \R, \forall k \in [|1,n|].$$
\end{lemma}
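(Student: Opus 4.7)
The strategy I would adopt is to build $f$ that transitions from $1$ to $0$ on a \emph{logarithmic} scale in $|x|$: composing a smooth template with $\log|x|$ produces derivatives that naturally decay like $1/|x|^k$, and choosing the transition to take place over a multiplicative interval of length $\sim e^{1/\epsilon}$ forces an additional factor of $\epsilon$ to appear in each derivative. This exactly matches the target bound $\epsilon/|x|$.

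Concretely, fix once and for all a template $\eta \in \mathcal C^\infty(\R)$ with values in $[0,1]$ such that $\eta \equiv 1$ on $(-\infty,1/4]$ and $\eta \equiv 0$ on $[3/4,+\infty)$, and for $\epsilon \in (0,1)$ and $M\ge 1$ define
\[
f(x) = \begin{cases} 1 & \text{if } |x|\le M,\\ \eta\!\left(\epsilon\log\frac{|x|}{M}\right) & \text{if } |x|>M. \end{cases}
\]
Because $\eta \equiv 1$ on an open neighborhood of $0$, the function $f$ is identically $1$ on the open set $\{|x|<Me^{1/(4\epsilon)}\}$, which contains $[-M,M]$ and an open neighborhood of both the gluing locus $\{|x|=M\}$ and the origin (where $x\mapsto|x|$ is not smooth). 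Away from $\{|x|\le M\}$, $x\mapsto \log|x|$ is smooth, so $f\in \mathcal C^\infty(\R)$. Compact support follows from $\eta\equiv 0$ on $[3/4,+\infty)$, which forces $f\equiv 0$ on $\{|x|\ge Me^{3/(4\epsilon)}\}$. Hence $f \in \mathcal C_c^\infty(\R)$, takes values in $[0,1]$, and satisfies $f\equiv 1$ on $[-M,M]$.

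For the derivative bound, set $u(x)=\epsilon\log(|x|/M)$ on $\{|x|>M\}$, so that $u^{(j)}(x)=(-1)^{j-1}(j-1)!\,\epsilon/x^{j}$ for $x>M$ (and symmetrically for $x<-M$). Faà di Bruno's formula gives
\[
f^{(k)}(x) = \sum \frac{k!}{m_1!\cdots m_k!}\,\eta^{(m_1+\cdots+m_k)}\!\bigl(u(x)\bigr)\prod_{j=1}^{k}\!\left(\frac{u^{(j)}(x)}{j!}\right)^{\!m_j},
\]
where the sum runs over $(m_1,\dots,m_k)\in\N^k$ with $\sum_j j m_j=k$. Each factor $u^{(j)}(x)$ is at most a combinatorial constant times $\epsilon/|x|^j$, so a given partition contributes at most $C_k\,\epsilon^{\sum_j m_j}/|x|^{k}$. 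Any partition with $\sum_j j m_j = k\ge 1$ satisfies $\sum_j m_j\ge 1$, so taking $\epsilon_0=1$ gives $\epsilon^{\sum m_j}\le \epsilon$, and summing over the finitely many partitions yields $|f^{(k)}(x)|\le C_k'\,\epsilon/|x|^{k}\le C\epsilon/|x|$, using $|x|\ge M\ge 1$ in the last inequality; the bound is trivial on $[-M,M]$ and outside the support, where $f^{(k)}\equiv 0$. The only point requiring any care is the extraction of the $\epsilon$ prefactor, which rests on the observation $\sum m_j\ge 1$ for every non-trivial partition, and the requirement that $\eta$ be constant on a genuine neighborhood of $(-\infty,0]$ (not merely on $(-\infty,0]$) to ensure $C^\infty$-gluing. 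Neither is a substantive obstacle.
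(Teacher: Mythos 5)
Your construction is correct: the composition $f=\eta(\epsilon\log(|x|/M))$ is smooth (it is locally constant near $x=0$ and near $\{|x|=M\}$, the only problematic loci), compactly supported in $\{|x|\le Me^{3/(4\epsilon)}\}$, and Fa\`a di Bruno together with $|u^{(j)}(x)|\le (j-1)!\,\epsilon/|x|^j$ and $\sum_j m_j\ge 1$ does yield $|f^{(k)}(x)|\le C_k\epsilon/|x|^k\le C\epsilon/|x|$ on the transition region (where $|x|\ge M\ge1$), the bound being trivial elsewhere. The underlying idea — a cutoff transitioning over a multiplicative interval of length $e^{c/\epsilon}$ so that the derivative is of order $\epsilon/|x|$ — is the same as in the paper, but the execution differs. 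The paper prescribes the \emph{derivative} first, as $\epsilon g(\cdot)/x$ suitably truncated, integrates it to define $f(x)=1-\int_0^{|x|}f_{A,\epsilon}(t)\,dt$, and must then choose the outer radius $A$ by an intermediate value argument so that the total integral equals $1$ and $f$ actually reaches $0$; this is precisely where the restriction $\epsilon<\epsilon_0$ with $\epsilon_0<1/(2\|g\|_\infty)$ enters, and the derivative bounds follow from the Leibniz rule applied to $\epsilon g(\cdot)/x$. Your version dispenses with the normalization step entirely (the function reaches $0$ because $\eta$ does, so $\epsilon_0=1$ suffices) at the cost of invoking Fa\`a di Bruno rather than Leibniz; both routes give constants depending only on $n$ and the fixed template, as required.
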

\begin{proof}
Let $g \in \mathcal C^\infty(\R)$ with values in $[0,1]$ such that $g(x)= 0$ for all $x\leq 0$ and $g(x) = 1$ for all $x \geq 1$. We will first define the derivative of the function $f$, then integrate. Let $\epsilon>0$, $M\geq 1$ and $A\geq M+1$. We define $f_{A,\epsilon}$ as follows :
$$
f_{A,\epsilon}(x) = \left\{
\begin{array}{ll}
\frac{\epsilon g(x-M)}{x} &\textrm{if } x \leq M+1 \\
\frac{\epsilon g(A+1-x)}{x} &\textrm{if } x \geq A \\
\frac{\epsilon }{x} &\textrm{if } x \in [M+1,A]. \\
\end{array}
\right.
$$
Let $h(A) = \int_{\R} f_{A,\epsilon} (x) dx$. Then $h(M+1) \leq 2\epsilon ||g||_\infty$, so that if $\epsilon_0<\frac{1}{2||g||_\infty}$, we have that for all $\epsilon<\epsilon_0$, $h(M+1)<1$. On the other hand, $h(A) \rightarrow +\infty$ when $A\rightarrow +\infty$, and since $h$ is continuous one can choose $A$ such that $h(A) = 1$. Then we define for every $x\in \R$,
$$f(x) = 1-\int_0^{|x|} f_{A,\epsilon} (t) dt.$$
Let us prove that $f$ has the required properties. It is clear that $f \in \mathcal C_c^\infty(\R)$ and that $f(x) =1$ if $x \in [-M,M]$. If $1\leq k \leq n$ and $A \leq x \leq A+1$,
\begin{align*}
|f^{(k)}(x)| &\leq \epsilon \sum_{l=0}^{k-1} C_{k-1}^l l! \frac{||g^{(k-1-l)}||_\infty}{|x|^{1+l}} \\
&\leq \frac{\epsilon C}{|x|},
\end{align*}
where $C = n! 2^n \max_{0\leq l \leq n} ||g^{(l)}||_\infty$. The same bound holds on $[M,M+1]$. On $[M+1,A]$, we have $|f^{(k)}(x)| \leq \epsilon n! / |x|$. Since $f$ is even, the same bounds hold for $x \leq 0$.
\end{proof}

\begin{lemma}
Assume $p=0$ or $p=1$, and $n \geq 1$. Then, for every random variables $X,Y$ with finite moment of order $p$,
$$d^{p,n}_W(X,Y) = d^{p,n}_c(X,Y).$$
\end{lemma}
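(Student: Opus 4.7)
The plan is to combine Lemma \ref{lem:app1} with a truncation argument based on the cutoff $f_{A,\epsilon}$ constructed in Lemma \ref{lem:2}. Since Lemma \ref{lem:app1} gives $d^{p,n}_W = d^{p,n}_\infty$, and since $d^{p,n}_c \leq d^{p,n}_\infty$ is trivial, the only nontrivial inequality is $d^{p,n}_\infty \leq d^{p,n}_c$. Fix $\phi \in B_{p,n} \cap \mathcal C^\infty(\R)$. When $p=1$, the constant $\phi(0)$ does not affect $\E[\phi(X)-\phi(Y)]$, so I may assume $\phi(0)=0$, which gives the linear bound $|\phi(x)| \leq |x|$ by the mean value theorem; when $p=0$ I simply have $|\phi|\leq 1$.

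Let $f = f_{A,\epsilon}$ be the cutoff from Lemma \ref{lem:2} (with some $M \geq 1$ and small $\epsilon > 0$), equal to $1$ on $[-M,M]$ and compactly supported. I split
\[
\E[\phi(X) - \phi(Y)] = \E[(\phi f)(X) - (\phi f)(Y)] + \E[\phi (1-f)(X) - \phi (1-f)(Y)].
\]
The first step is to verify that $\phi f /(1+C\epsilon)$ lies in $B_{p,n} \cap \mathcal C_c^\infty(\R)$ for some constant $C=C(n)$. By the Leibniz formula, for $1 \leq k \leq n$,
\[
(\phi f)^{(k)}(x) = \phi^{(k)}(x) f(x) + \sum_{j=0}^{k-1} \binom{k}{j} \phi^{(j)}(x) f^{(k-j)}(x).
\]
For $|x| \leq M$, $f^{(k-j)}(x)=0$ for $j < k$ and $f(x)=1$, so $|(\phi f)^{(k)}(x)| = |\phi^{(k)}(x)| \leq 1$. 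For $|x| \geq M$, Lemma \ref{lem:2} gives $|f^{(k-j)}(x)| \leq C\epsilon/|x|$ for $j < k$. The terms with $j \geq p$ are bounded by $C\epsilon/|x| \leq C\epsilon$ (since $|\phi^{(j)}|\leq 1$), and the only possibly problematic term is $j=0$ when $p=1$; but in that case $|\phi(x)| \leq |x|$ and hence $|\phi(x) f^{(k)}(x)| \leq C\epsilon$. Similarly, for $p=0$ and $k=0$, $|\phi f| \leq 1$. In all cases, all derivatives of order at least $p$ are bounded by $1+C\epsilon$.

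Next I bound the tail term. When $p=0$, $|\phi (1-f)(x)| \leq \mathbf{1}_{|x| > M}$ so $|\E[\phi (1-f)(X)]| \leq \PP(|X|>M)$, which vanishes as $M \to \infty$. When $p=1$, $|\phi(x)(1-f(x))| \leq |x| \mathbf{1}_{|x|>M}$, so the tail term is bounded by $\E[|X| \mathbf{1}_{|X|>M}] + \E[|Y| \mathbf{1}_{|Y|>M}]$, which also vanishes as $M \to \infty$ by dominated convergence since $\E|X|, \E|Y| < \infty$. Combining this with the first step,
\[
\E[\phi(X) - \phi(Y)] \leq (1+C\epsilon)\, d^{p,n}_c(X,Y) + \eta(M),
\]
with $\eta(M) \to 0$ as $M \to \infty$. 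Letting $M \to \infty$, then $\epsilon \to 0$, and finally taking the supremum over $\phi \in B_{p,n} \cap \mathcal C^\infty(\R)$ yields $d^{p,n}_\infty(X,Y) \leq d^{p,n}_c(X,Y)$, as required.

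The only genuinely delicate point is controlling the derivatives of $\phi f$ when $p=1$: here $\phi$ itself need not be bounded, so a priori the product $\phi \cdot f^{(k)}$ could blow up. The key trick is to normalize $\phi(0)=0$ so that the linear growth of $\phi$ exactly balances the $1/|x|$ decay built into $f^{(k)}$ by Lemma \ref{lem:2}; this is why the hypothesis on moments of order $p$ in the statement is essential.
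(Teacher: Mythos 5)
Your proof is correct and follows essentially the same route as the paper's: reduce to $d^{p,n}_\infty \leq d^{p,n}_c$ via Lemma \ref{lem:app1}, normalize $\phi(0)=0$ so the linear growth of $\phi$ is cancelled by the $C\epsilon/|x|$ decay of the cutoff from Lemma \ref{lem:2}, and control the tail using the moment of order $p$. The only (cosmetic) differences are that you keep $M$ and $\epsilon$ as independent parameters and take iterated limits where the paper chooses $M=M(\epsilon)$, and you write $f_{A,\epsilon}$ for what is really the integrated cutoff $f$ of Lemma \ref{lem:2}.
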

\begin{proof}
From Lemma \ref{lem:app1}, it suffices to show that $d^{p,n}_\infty=d^{p,n}_c$.

It is clear that $d_c \leq d_\infty$.

Let $\epsilon >0$. Let $M>1$ such that if $K = [-M,M]$,
$$\E[ |X|^p\; \mathbf 1_{X \in K^c}] \leq \epsilon,$$
$$\E[ |Y|^p\; \mathbf 1_{Y \in K^c}] \leq \epsilon.$$

Let $\phi \in B_{p,n} \cap \mathcal C^\infty(\R)$. We may assume that $\phi(0) = 0$, in which case, since $||\phi'||_\infty\leq 1$, $|\phi(x)| \leq |x|$.

Let $f$ be the function defined in Lemma \ref{lem:2} and $\tilde \phi = \phi f$. In the following, $C$ is a constant independent of $\epsilon$ and $\phi$ which may vary from line to line. Then for all $k \in [|1,n|]$ :
\begin{align*}
|\tilde \phi^{(k)}(x)| &=  \bigg|\sum_{l=0}^k C_k^l f^{(l)}(x) \phi^{(k-l)}(x)\bigg|\\
&= \bigg| f(x) \phi^{(k)}(x) + \sum_{l=1}^{k-p} C_k^l f^{(l)}(x) \phi^{(k-l)}(x) +  \sum_{l=k-p+1}^{k} C_k^l f^{(l)}(x) \phi^{(k-l)}(x) \bigg|\\
&\leq 1 + C \epsilon +   |f^{(k)}(x)| |\phi(x)|\\
&\leq 1 + C \epsilon +   \frac{C\epsilon}{|x|}\, |x|\\
&\leq 1 + C \epsilon
\end{align*}
Thus, $\tilde \phi / (1+C \epsilon) \in B_n \cap \mathcal C_c^\infty(\R)$. This leads to
\begin{align*}
\E[\phi(X) - \phi(Y)] &= \E[\tilde \phi(X) - \tilde \phi(Y)] + \E[ \phi(X)(1-f(X))] - \E[\phi(Y)(1-f(Y))]\\
& \leq (1+C \epsilon) d_c(X,Y) + 2 \epsilon.
\end{align*}
By taking the supremum over $\phi$ and letting $\epsilon$ go to zero, we obtain $d_\infty \leq d_c$, which achieves the proof.
\end{proof}

\section{Stable distributions}
\label{sec:stable-distributions}

Let $\alpha\in (1,2)$. Let $c=(1-\alpha)/(2\Gamma(2-\alpha)\cos(\pi\alpha/2))$. We denote by $Z^{\alpha}$ a symmetric $\alpha$-stable random variable whose characteristic function is given by:
\begin{align*}
\forall\xi\in\mathbb{R},\ \phi_{Z^{\alpha}}(\xi)=\exp\bigg(-\mid\xi\mid^\alpha\bigg).
\end{align*}
We define the following differential operator on smooth enough function:
\begin{align*}
\mathcal{D}^{\alpha-1}(\psi)(x)=\frac{1}{2\pi}\int_{\mathbb{R}}\mathcal{F}(\psi)(\xi)\big(\dfrac{i\alpha\mid\xi\mid^\alpha}{\xi}\big)e^{i\xi x}d\xi
\end{align*}
We have the following straightforward Stein-type characterization identity:
\begin{align*}
\mathbb{E}\big[Z^{\alpha}\psi(Z^{\alpha})\big]=\mathbb{E}[\mathcal{D}^{\alpha-1}(\psi)(Z^{\alpha})].
\end{align*}
We denote by $\operatorname{Dom}(Z^{\alpha})$ the normal domain of
attraction of $Z^{\alpha}$. We recall the following result (see
Theorem $5$ page $81$ of \cite{GK68} and Theorem $2.6.7$ page $92$ of
\cite{IL65}).
\begin{theorem}\label{StableCLT}
A distribution function, $F$, is in the normal domain of attraction of $Z^{\alpha}$ if and only if:
\begin{align*}
&\forall x>0,\ F(x)=1-\dfrac{(c+a_1(x))}{x^{\alpha}},\\
&\forall x<0,\ F(x)=\dfrac{(c+a_2(x))}{(-x)^{\alpha}},
\end{align*}
with $\underset{x\rightarrow+\infty}{\lim}a_1(x)=\underset{x\rightarrow-\infty}{\lim}a_2(x)=0$.
\end{theorem}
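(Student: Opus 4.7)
The plan is to prove this characterization through a careful analysis of characteristic functions, exploiting Lévy's continuity theorem. Let $X_1,X_2,\ldots$ be i.i.d.\ with common distribution $F$ and let $S_n=X_1+\cdots+X_n$. By definition, $F\in\operatorname{Dom}(Z^{\alpha})$ means that $n^{-1/\alpha}S_n$ converges in distribution to $Z^{\alpha}$, which by Lévy's theorem is equivalent to the pointwise convergence $\phi(t/n^{1/\alpha})^n\to \exp(-|t|^{\alpha})$. Writing $\phi=1-(1-\phi)$ and taking logarithms (legitimate since $1-\phi(s)\to 0$ as $s\to 0$), this is in turn equivalent to the asymptotic
$$ n\bigl(1-\phi(t/n^{1/\alpha})\bigr)\;\longrightarrow\;|t|^{\alpha},\qquad n\to\infty,$$
which reduces the theorem to establishing the equivalence between the tail condition on $F$ and the local behaviour $1-\phi(s)=|s|^{\alpha}(1+o(1))$ as $s\to 0$.

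For the sufficiency direction, I would start from the decomposition
$$ 1-\phi(s)=\int_{\R}(1-\cos(sx))\,dF(x)\;-\;i\int_{\R}\sin(sx)\,dF(x).$$
For the real part, integration by parts and a change of variable $u=sx$ convert the integral into
$$ \int_{0}^{\infty}(1-\cos(sx))\,d(1-F(x))+\int_{0}^{\infty}(1-\cos(-sx))\,dF(-x),$$
after which the tail hypothesis $F(x)=1-(c+a_{1}(x))x^{-\alpha}$ (resp.\ $F(-x)=(c+a_{2}(-x))x^{-\alpha}$) can be plugged in. Splitting off the main piece and using the classical identity $\int_{0}^{\infty}(1-\cos(sx))x^{-\alpha-1}dx=-|s|^{\alpha}\Gamma(1-\alpha)\cos(\pi\alpha/2)/\alpha$, a direct verification shows that the specific value $c=(1-\alpha)/(2\Gamma(2-\alpha)\cos(\pi\alpha/2))$ is precisely tuned so that the main contribution equals exactly $|s|^{\alpha}$. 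The remainder terms, involving $a_{1}$ and $a_{2}$, are dealt with by dominated convergence using $a_{i}(x)\to 0$. The imaginary part, after the same manipulation, produces cancellation between the $x>0$ and $x<0$ halves (thanks to the identical leading constant $c$ on both sides) and contributes only a negligible term as $s\to 0$.

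For the necessity direction, I would reverse the argument via a Tauberian-type recovery: knowing $1-\phi(s)\sim |s|^{\alpha}$ as $s\to 0$, recover the tail behaviour of $F$ by inversion. One clean route is to invoke the general convergence criteria for row-sums of triangular arrays to infinitely divisible limits (the Lévy-Khintchine / Gnedenko convergence theorem): the limit $Z^{\alpha}$ has a canonical Lévy measure of the form $\nu(dx)=c\alpha|x|^{-\alpha-1}dx$, and the convergence $n\PP[X_1/n^{1/\alpha}\in A]\to\nu(A)$ for continuity sets $A$ bounded away from $0$ forces exactly the tail asymptotic claimed in the theorem, with the same constant $c$ on both sides by the symmetry of $Z^{\alpha}$.

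The main obstacle is the uniform control of the error terms produced by $a_{1}$ and $a_{2}$ in the asymptotic analysis: one has to show that
$$ \int_{0}^{\infty}(1-\cos(sx))\,d\!\left(\frac{a_{i}(x)}{x^{\alpha}}\right) = o(|s|^{\alpha}),$$
which requires a careful truncation at a scale depending on $s$ (typically $|x|\asymp 1/|s|$) and an integration by parts that transfers the oscillation of $1-\cos(sx)$ onto the regular part. A secondary subtlety is that the classical statement tacitly allows any centering when $\alpha>1$, so one must check that the first-moment condition implied by $\alpha\in(1,2)$ is compatible with the chosen normalisation $n^{-1/\alpha}S_{n}$ and does not require an additional recentring.
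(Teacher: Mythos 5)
The paper does not prove this statement at all: Theorem \ref{StableCLT} is recalled as a classical result and attributed to Gnedenko--Kolmogorov (Theorem 5, p.~81 of \cite{GK68}) and Ibragimov--Linnik (Theorem 2.6.7, p.~92 of \cite{IL65}), so there is no in-paper argument to compare yours against. Your sketch follows exactly the standard route taken in those references: reduce domain-of-attraction membership to the local behaviour $1-\phi(s)=|s|^{\alpha}(1+o(1))$ via L\'evy continuity, prove sufficiency by integrating the tail hypothesis against $1-\cos(sx)$ and $\sin(sx)$ (the specific value of $c$ being tuned so that the leading term is exactly $|s|^{\alpha}$, with the imaginary parts cancelling by equality of the two tail constants), and prove necessity via the Gnedenko convergence criterion for triangular arrays, using that the L\'evy measure of $Z^{\alpha}$ is $c\alpha|x|^{-\alpha-1}dx$ with the same constant $c$. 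That is the correct and classical strategy, so as a plan it is sound, and the hard points you flag (the $o(|s|^{\alpha})$ control of the $a_i$-error terms via truncation at scale $|x|\asymp 1/|s|$) are indeed where the work lies.

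Two small corrections. First, your integral identity has the wrong sign: for $\alpha\in(1,2)$ one has
\begin{equation*}
\int_0^{\infty}\frac{1-\cos(sx)}{x^{1+\alpha}}\,dx \;=\; |s|^{\alpha}\,\frac{\Gamma(1-\alpha)\cos(\pi\alpha/2)}{\alpha},
\end{equation*}
which is positive since both $\Gamma(1-\alpha)$ and $\cos(\pi\alpha/2)$ are negative on $(1,2)$; with this version the constant $2c\,\Gamma(1-\alpha)\cos(\pi\alpha/2)=1$ comes out as you intend. Second, on the centering subtlety: your hope that no recentring is needed is not correct in general. For $\alpha\in(1,2)$ the tail condition forces a finite mean, and if $\mathbb{E}X\neq 0$ then $n^{-1/\alpha}S_n$ diverges (the drift $n^{1-1/\alpha}\mathbb{E}X\to\infty$), so the equivalence with $n\bigl(1-\phi(t/n^{1/\alpha})\bigr)\to|t|^{\alpha}$ only holds after centering. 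The resolution is that the normal domain of attraction is defined with centering constants allowed; since the tail criterion is insensitive to translation, one may reduce to the centered case at the outset, which is also the setting in which the paper actually uses the theorem ($\mathbb{E}[X]=0$).
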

\begin{remark}\label{examples}
Let $\lambda=(2c)^{1/\alpha}$ and consider the following Pareto type distribution function:
\begin{align*}
&\forall x>0,\ F_\lambda(x)=1-\dfrac{1}{2(1+\frac{x}{\lambda})^{\alpha}},\\
&\forall x<0,\ F_\lambda(x)=\dfrac{1}{2(1-\frac{x}{\lambda})^{\alpha}}.
\end{align*}
It is easy to check that $F_\lambda$ is in $\operatorname{Dom}(Z^{\alpha})$. In particular, we have:
\begin{align*}
\forall x>0,\ a_1(x)=a_2(-x)=\dfrac{x^\alpha}{2\big(1+\frac{x}{\lambda}\big)^\alpha}-c.
\end{align*}
\end{remark}
\noindent
Let $X$ be a random variable in $\operatorname{Dom}(Z^{\alpha})$ such that $\mathbb{E}[X]=0$. We make the following assumptions:
\begin{itemize}
\item The functions $a_1(.)$ and $a_2(.)$ are continuous and bounded on $\mathbb{R}^*_+$ and on $\mathbb{R}^*_-$ respectively. Moreover, they satisfy:
\begin{align*}
&\underset{x\rightarrow+\infty}{\lim}xa_1(x)<+\infty,\ \underset{x\rightarrow-\infty}{\lim}xa_2(x)<+\infty.
\end{align*}
\end{itemize}
Let $(X_i)$ be independent copies of $X$. We define the random variable $W$ by:
\begin{align*}
W=\frac{1}{n^{\frac{1}{\alpha}}}\sum_{i=1}^nX_i.
\end{align*}
We consider the following function on $\mathbb{R}^*$:
\begin{align*}
\phi_X^*(\xi)=\dfrac{-\xi}{\alpha\mid\xi\mid^\alpha}\mathbb{E}\big[iX\exp\big(i\xi X\big)\big].
\end{align*}
Note in particular that this function is well-defined, continuous on
$\mathbb{R}^*$ and
$\underset{\mid\xi\mid\rightarrow +\infty}{\lim}\phi_X^*(\xi)=0$ since
$X\in \operatorname{Dom}(Z^{\alpha})$ and $\alpha\in (1,2)$. 

\begin{proposition}\label{properties}
The function $\phi_X^*$ satisfies:
\begin{itemize}
\item $\phi_X^*(0^+)=\phi_X^*(0^-)=1$.
\item If the law of $X$ is symmetric, then $\phi_X^*$ is real-valued and even.
\item There exists a tempered distribution $T_X^*$ such that $\overline{\phi_X^*}$ is the Fourier transform of $T_X^*$.
\item For all $\psi$ smooth enough, we have:
\begin{align*}
\mathbb{E}\big[X\psi(X)\big]=<T_X^*;\mathcal{D}^{\alpha-1}(\psi)>
\end{align*}
\item We have the following formulae:
\begin{align*}
&\forall\xi>0,\ \phi_X(\xi)- \phi_{Z^{\alpha}}(\xi)=\alpha e^{-\xi^\alpha}\int_0^\xi\nu^{\alpha-1}e^{\nu^\alpha}\big(\phi_X(\nu)-\phi^*_X(\nu)\big)d\nu,\\
&\forall\xi<0,\ \phi_X(\xi)- \phi_{Z^{\alpha}}(\xi)=\alpha e^{-(-\xi)^\alpha}\int_\xi^0(-\nu)^{\alpha-1}e^{(-\nu)^\alpha}\big(\phi_X(\nu)-\phi^*_X(\nu)\big)d\nu.
\end{align*}
\end{itemize}
\end{proposition}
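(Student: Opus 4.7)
The key observation tying the five claims together is the alternative form
\begin{equation*}
\phi_X^*(\xi) \;=\; -\tfrac{\xi}{\alpha|\xi|^\alpha}\,\phi_X'(\xi),
\end{equation*}
obtained by differentiating under the expectation ($\phi_X'(\xi)=\mathbb{E}[iXe^{i\xi X}]$, which is justified since $\alpha>1$ guarantees $\mathbb{E}|X|<\infty$). I would establish each bullet in turn using this identity.

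For the first bullet, I would derive the near-origin expansion of $\phi_X$ from the tail assumption in Theorem~\ref{StableCLT}. Writing $\phi_X(\xi)-1=\int(e^{i\xi x}-1-i\xi x)dF(x)$ (permitted because $\mathbb{E}[X]=0$) and splitting the integral along the tail representation of $F$, standard computations using the identities $\int_0^{\infty}(1-\cos t)t^{-\alpha-1}dt=-\Gamma(-\alpha)\cos(\pi\alpha/2)$ and $\int_0^{\infty}(\sin t-t\mathbf 1_{t\le1})t^{-\alpha-1}dt=-\Gamma(-\alpha)\sin(\pi\alpha/2)$ give, thanks to the specific value of $c$ and to $a_1,a_2\to0$, the asymptotics $\phi_X(\xi)=1-|\xi|^{\alpha}+o(|\xi|^{\alpha})$ at the origin, hence $\phi_X'(\xi)=-\alpha|\xi|^{\alpha-1}\operatorname{sign}(\xi)+o(|\xi|^{\alpha-1})$; plugging into the displayed identity for $\phi_X^*$ yields $\phi_X^*(0^{\pm})=1$. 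The second bullet is then immediate: symmetry of $X$ makes $\phi_X$ real and even, so $\phi_X'$ is real and odd, and multiplying by the real odd factor $-\xi/(\alpha|\xi|^\alpha)$ produces a real even function.

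For the third bullet, I would check that $\phi_X^*$ is a bounded continuous function on $\mathbb{R}$, which shows at once that its complex conjugate defines a tempered distribution and so is the Fourier transform of some $T_X^*\in\mathcal{S}'(\mathbb{R})$. Continuity and boundedness near $0$ follow from bullet one; boundedness elsewhere uses $|\phi_X^*(\xi)|\le \mathbb{E}|X|/(\alpha|\xi|^{\alpha-1})$, which in addition yields $\phi_X^*(\xi)\to0$ as $|\xi|\to\infty$. For the fourth bullet, I would apply Fourier inversion to $\psi\in\mathcal{S}$, write
\begin{equation*}
\mathbb{E}[X\psi(X)]=\tfrac{1}{2\pi}\int\mathcal{F}(\psi)(\xi)\,\mathbb{E}[Xe^{i\xi X}]\,d\xi=\tfrac{1}{2\pi}\int\mathcal{F}(\psi)(\xi)\,\tfrac{i\alpha|\xi|^\alpha}{\xi}\,\phi_X^*(\xi)\,d\xi,
\end{equation*}
where the second equality uses $\mathbb{E}[Xe^{i\xi X}]=-i\phi_X'(\xi)=(i\alpha|\xi|^\alpha/\xi)\phi_X^*(\xi)$. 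Recognising the bracketed factor as $\mathcal{F}(\mathcal{D}^{\alpha-1}\psi)$ and using the distributional Parseval identity $\langle\hat T,\varphi\rangle=\langle T,\hat\varphi\rangle$ then gives the Stein-type identity $\mathbb{E}[X\psi(X)]=\langle T_X^*;\mathcal{D}^{\alpha-1}(\psi)\rangle$ for the $T_X^*$ defined in bullet three.

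The fifth bullet is the Stein-Tikhomirov identity specialised to the symmetric $\alpha$-stable target. From $\phi_{Z^\alpha}(\xi)=e^{-|\xi|^\alpha}$ one reads $\phi_{Z^\alpha}'(\xi)/\phi_{Z^\alpha}(\xi)=-\alpha|\xi|^{\alpha-1}\operatorname{sign}(\xi)$, so the operator $L_\infty$ of \eqref{eq:13} applied to $\phi_X$ is $L_\infty\phi_X(\nu)=\phi_X'(\nu)+\alpha|\nu|^{\alpha-1}\operatorname{sign}(\nu)\phi_X(\nu)$, and replacing $\phi_X'(\nu)$ by $-\alpha|\nu|^{\alpha-1}\operatorname{sign}(\nu)\phi_X^*(\nu)$ turns this into $\alpha|\nu|^{\alpha-1}\operatorname{sign}(\nu)(\phi_X(\nu)-\phi_X^*(\nu))$. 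Integrating this identity through $\mathcal{D}_\infty$ as in \eqref{eq:18}, i.e.\ multiplying by $e^{\nu^\alpha}$, integrating from $0$ to $\xi$ and premultiplying by $e^{-|\xi|^\alpha}$, produces the two displayed formulas after separating the cases $\xi>0$ and $\xi<0$.

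The most delicate step is bullet one: deriving the precise prefactor $1$ in the asymptotic $\phi_X(\xi)\sim1-|\xi|^\alpha$ requires calibrating carefully the constant $c$ of Theorem~\ref{StableCLT} against the normalisation $\phi_{Z^\alpha}(\xi)=e^{-|\xi|^\alpha}$; this is precisely what identifies the \emph{normal} domain of attraction. Bullet four requires only mild care in handling the Fourier-Parseval duality for tempered distributions; the remaining bullets are direct algebraic consequences.
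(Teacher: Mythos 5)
Your bullets 2--5 follow essentially the paper's own route: bullet 5 is exactly the computation $\phi_X'-\phi_X\,\phi_{Z^\alpha}'/\phi_{Z^\alpha}=\alpha|\xi|^{\alpha-1}\operatorname{sign}(\xi)(\phi_X-\phi_X^*)$ followed by integration, and bullet 4 is the same Fourier--Parseval identification of $T_X^*$ with the functional $\psi\mapsto\frac{1}{2\pi}\int\mathcal F(\psi)\phi_X^*$ (your pointwise bound $|\phi_X^*(\xi)|\le \mathbb{E}|X|\,|\xi|^{1-\alpha}/\alpha$ for bullet 3 is in fact cleaner than the paper's, which only gets linear growth from the $a_i$ terms). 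The only caveat there is that $\mathcal D^{\alpha-1}$ does not map $\mathcal S$ into $\mathcal S$ (its symbol $i\alpha|\xi|^\alpha/\xi$ is not smooth at $0$), so the pairing $\langle T_X^*;\mathcal D^{\alpha-1}\psi\rangle$ has to be interpreted through the explicit integral; the paper handles this by restricting to the subspace $S_0$ of Schwartz functions with all moments vanishing, which is invariant under $\mathcal D^{\alpha-1}$. Your "mild care with Parseval" should be made explicit on this point, but it is not a fatal issue since $\frac{|\xi|^\alpha}{|\xi|}|\phi_X^*(\xi)|=|\phi_X'(\xi)|/\alpha$ is bounded.

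The genuine gap is in bullet 1. You derive $\phi_X(\xi)=1-|\xi|^\alpha+o(|\xi|^\alpha)$ and then write ``hence $\phi_X'(\xi)=-\alpha|\xi|^{\alpha-1}\operatorname{sign}(\xi)+o(|\xi|^{\alpha-1})$.'' An asymptotic expansion of a function cannot be differentiated term by term: $f(\xi)=1-\xi^\alpha+\xi^{\alpha+1}\sin(\xi^{-10})$ satisfies the first relation but not the second. Since $\phi_X^*(\xi)=-\tfrac{\xi}{\alpha|\xi|^\alpha}\phi_X'(\xi)$, the whole of bullet 1 rests on the asymptotics of the \emph{derivative}, so this step as written fails. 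The repair is to run your tail-splitting/contour-integral computation directly on $\phi_X'(\xi)=i\int x(e^{i\xi x}-1)\,dF_X(x)$ (the subtraction of $1$ being free since $\mathbb{E}[X]=0$): integrating by parts against the representation $1-F_X(x)=(c+a_1(x))x^{-\alpha}$, using $\int_0^\infty(e^{ix}-1)x^{-\alpha}dx=ie^{-i\pi\alpha/2}\Gamma(2-\alpha)/(1-\alpha)$ and the value of $c$, one gets $\phi_X'(\xi)=-\alpha\xi^{\alpha-1}\bigl(1+o(1)\bigr)$ as $\xi\to0^+$ with the $o(1)$ coming from dominated convergence applied to the $a_1,a_2$ remainder integrals. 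This is exactly what the paper does; your toolbox is the right one, but it must be applied to $\mathbb{E}[Xe^{i\xi X}]$ rather than to $\phi_X$ itself.
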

\begin{proof}
See the appendix \ref{sec:technical-proofs}.
\end{proof}

\begin{remark}
The functional equality appearing in the fourth bullet point of the previous proposition is very close to the definition of the zero-bias transformation distribution for centred random variable with finite variance (see \cite{GR97}). It would be interesting to know if $T_X^*$ is actually a linear functional defined by a probability measure on $\mathbb{R}$.
\end{remark}
\noindent
Since the $X_i$ are independent and identically distributed, we have the following formula for the characteristic function of $W$:
\begin{align*}
\forall\xi\in\mathbb{R},\ \phi_W(\xi)=\phi_X^n\big(\frac{\xi}{n^{\frac{1}{\alpha}}}\big)
\end{align*}
Consequently, this characteristic function is differentiable and we have:
\begin{align*}
\phi^*_W(\xi)=\phi_X^{n-1}\big(\frac{\xi}{n^{\frac{1}{\alpha}}}\big)\phi_X^*\big(\frac{\xi}{n^{\frac{1}{\alpha}}}\big).
\end{align*}
Thus, we can apply the previous proposition to $W$ and we get the following formula for $\xi>0$:
\begin{align}
\phi_W(\xi)-\phi_{Z^{\alpha}}(\xi)&=\alpha e^{-\xi^\alpha}\int_0^\xi\nu^{\alpha-1}e^{\nu^\alpha}\big(\phi_W(\nu)-\phi^*_W(\nu)\big)d\nu, \nonumber\\
&=\alpha e^{-\xi^\alpha}\int_0^\xi\nu^{\alpha-1}e^{\nu^\alpha}\phi_X^{n-1}\big(\frac{\nu}{n^{\frac{1}{\alpha}}}\big)\big(\phi_X\big(\frac{\nu}{n^{\frac{1}{\alpha}}}\big)-\phi_X^*\big(\frac{\nu}{n^{\frac{1}{\alpha}}}\big)\big)d\nu.\label{eq-formulaW1}
\end{align}
From the previous formula, we deduce the following straightforward pointwise bound:
\begin{align*}
|\phi_W(\xi)-\phi_{Z^{\alpha}}(\xi)| \leq \alpha e^{-\xi^\alpha}\int_0^\xi\nu^{\alpha-1}e^{\nu^\alpha}\mid\big(\phi_X\big(\frac{\nu}{n^{\frac{1}{\alpha}}}\big)-\phi_X^*\big(\frac{\nu}{n^{\frac{1}{\alpha}}}\big)\big)\mid d\nu. 
\end{align*}
Now, the objective is twofold:
\begin{itemize}
\item Find an efficient way to bound the term  $\nu^{\alpha-1}\mid\big(\phi_X\big(\frac{\nu}{n^{\frac{1}{\alpha}}}\big)-\phi_X^*\big(\frac{\nu}{n^{\frac{1}{\alpha}}}\big)\big)\mid$.
\item Bound finely the Dawson-type function associated with the characteristic function of the stable law.
\end{itemize}
To deal with the second bullet point, we have the following lemma.

\begin{lemma}\label{DawsonStable}
For any $\alpha\in (1,2)$, there exists a positive constant $C_1(\alpha)>0$ such that, for $\xi>0$:
\begin{align*}
e^{-\xi^\alpha}\int_0^\xi e^{\nu^\alpha}d\nu\leq \left\{
    \begin{array}{ll}
       C_1(\alpha)e^{-\xi^\alpha}+\xi^{1-\alpha}  & \xi>1, \\
       C_1(\alpha)e^{-\xi^\alpha} & \xi<1.
    \end{array}
\right.
\end{align*}
\end{lemma}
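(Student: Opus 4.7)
The plan is to split the analysis according to whether $\xi$ is small or large, and handle the large regime via a standard Laplace-type integration by parts that takes advantage of the fact that $e^{\nu^{\alpha}}$ is the antiderivative (up to a factor) of $\alpha\nu^{\alpha-1}e^{\nu^{\alpha}}$.

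For the easy regime $\xi<1$, I would use the crude pointwise bound $e^{\nu^{\alpha}}\leq e^{\xi^{\alpha}}$ on $[0,\xi]$ to get
\[
e^{-\xi^{\alpha}}\int_{0}^{\xi}e^{\nu^{\alpha}}d\nu\leq \xi\leq 1\leq e\cdot e^{-\xi^{\alpha}},
\]
where the last inequality uses $e^{-\xi^{\alpha}}\geq e^{-1}$ for $\xi\leq 1$. This already forces the eventual constant $C_{1}(\alpha)$ to be at least $e$.

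For the regime $\xi>1$, I would split $\int_{0}^{\xi}=\int_{0}^{1}+\int_{1}^{\xi}$. The first piece is harmless: it contributes at most $K(\alpha)e^{-\xi^{\alpha}}$ with $K(\alpha)=\int_{0}^{1}e^{\nu^{\alpha}}d\nu\leq e$. The main work is on the tail piece $J:=\int_{1}^{\xi}e^{\nu^{\alpha}}d\nu$, which I would rewrite as
\[
J=\int_{1}^{\xi}\frac{1}{\alpha\nu^{\alpha-1}}\cdot\bigl(\alpha\nu^{\alpha-1}e^{\nu^{\alpha}}\bigr)d\nu
\]
and integrate by parts with $u=\nu^{1-\alpha}/\alpha$, $dv=\alpha\nu^{\alpha-1}e^{\nu^{\alpha}}d\nu$ (so $v=e^{\nu^{\alpha}}$). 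The boundary term produces $\frac{\xi^{1-\alpha}}{\alpha}e^{\xi^{\alpha}}-\frac{e}{\alpha}$, while the remainder integral $\frac{\alpha-1}{\alpha}\int_{1}^{\xi}\nu^{-\alpha}e^{\nu^{\alpha}}d\nu$ can be bounded from above by $\frac{\alpha-1}{\alpha}J$ since $\nu^{-\alpha}\leq 1$ on $[1,\xi]$. Collecting and solving for $J$ yields
\[
J\leq \xi^{1-\alpha}e^{\xi^{\alpha}}-e,
\]
so $e^{-\xi^{\alpha}}J\leq \xi^{1-\alpha}$, and combined with the $[0,1]$ contribution we get the claimed bound with $C_{1}(\alpha)=\max(K(\alpha),e)$ (which one may simply take to be $e$, noting the first case already demands it).

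There is no real obstacle here: the only subtle point is choosing the integration-by-parts splitting so that the resulting remainder feeds back linearly into $J$ with coefficient $\frac{\alpha-1}{\alpha}<1$, which is what makes the absorption step work and explains both the exponent $1-\alpha$ and the need to restrict to $\alpha\in(1,2)$ (so the coefficient is strictly less than $1$ and the boundary at $\nu=1$ is well-behaved).
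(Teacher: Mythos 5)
Your proof is correct and follows essentially the same route as the paper: the identical integration by parts writing $e^{\nu^{\alpha}}=\nu^{1-\alpha}\cdot\nu^{\alpha-1}e^{\nu^{\alpha}}$ on $[1,\xi]$, followed by absorbing the remainder $\frac{\alpha-1}{\alpha}\int_{1}^{\xi}\nu^{-\alpha}e^{\nu^{\alpha}}d\nu\leq\frac{\alpha-1}{\alpha}J$ back into $J$. You are in fact slightly more explicit than the paper about the $[0,1]$ contribution and the $\xi<1$ case, which the paper leaves implicit.
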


\begin{proof}
Let $\xi\geq 1$. We have by integration by parts:
\begin{align*}
\int_1^\xi e^{\nu^\alpha}d\nu&=\int_1^\xi \nu^{1-\alpha} \nu^{\alpha-1} e^{\nu^\alpha}d\nu,\\
&=\big[\nu^{1-\alpha}\frac{e^{\nu^\alpha}}{\alpha}\big]_1^\xi+\frac{\alpha-1}{\alpha}\int_1^\xi\nu^{-\alpha}e^{\nu^\alpha}.
\end{align*}
Roughly bounding the previous formula, we obtain:
\begin{align*}
\int_1^\xi e^{\nu^\alpha}d\nu\leq \frac{1}{\alpha}\xi^{1-\alpha}e^{\xi^\alpha}+\frac{\alpha-1}{\alpha}\int_1^\xi x^{-\alpha}e^{x^\alpha}dx,
\end{align*}
which implies:
\begin{align*}
\int_1^\xi e^{\nu^\alpha}d\nu\leq \xi^{1-\alpha}e^{\xi^\alpha}.
\end{align*}
This concludes the proof of the lemma.
\end{proof}
\noindent
To deal with the first bullet point, we use the following formula first obtained in the proof of Proposition \ref{properties}:
\begin{align}
\phi_X^*(\xi)&=1+\frac{1}{\alpha}\int_0^{+\infty}e^{ix}\frac{a_1\big(\frac{x}{\xi}\big)}{x^{\alpha-1}}dx\nonumber\\
&-\frac{i}{\alpha}\int_0^{+\infty}(e^{i x}-1)\frac{a_1\big(\frac{x}{\xi}\big)}{x^\alpha}dx+\frac{1}{\alpha}\int_{-\infty}^0e^{ix}\frac{a_2\big(\frac{x}{\xi}\big)}{(-x)^{\alpha-1}}dx\nonumber\\
&+\frac{i}{\alpha}\int_{-\infty}^0(e^{i x}-1)\frac{a_2\big(\frac{x}{\xi}\big)}{(-x)^\alpha}dx.\label{eq-formulaW2}
\end{align}
Thus, the difference between $\phi_X\big(\xi\big)$ and $\phi_X^*\big(\xi\big)$ brings into play the difference between $\phi_X\big(\xi\big)$ and $1$. Since $X\in \operatorname{Dom}(Z^{\alpha})$, we expect this difference to be of order $\mid\xi\mid^{\alpha}$. This is the aim of the next lemma.
\begin{lemma}\label{orderN}
There exists some strictly positive constant, $C_{\alpha}$, depending on $\alpha$ only, such that, for any $\xi$ in $\mathbb{R}$, we have:
\begin{align*}
\mid\phi_X\big(\xi\big)-1\mid\leq C_{\alpha}\mid\xi\mid^{\alpha}\big(1+\mid\mid a_1\mid\mid_{\infty}+\mid\mid a_2\mid\mid_{\infty}\big)
\end{align*} 
\end{lemma}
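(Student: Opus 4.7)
\medskip

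\noindent\textbf{Proof plan for Lemma \ref{orderN}.}
The natural strategy is to split the integral defining $\phi_X(\xi)-1$ at the critical scale $|X|=1/|\xi|$, using a Taylor bound below that scale and a trivial bound above it. Since $\alpha\in(1,2)$ we have $\mathbb{E}[|X|]<\infty$ (from the tail behaviour given by Theorem \ref{StableCLT}) and the centering hypothesis $\mathbb{E}[X]=0$ allows us to write
\begin{equation*}
\phi_X(\xi)-1 \;=\; \mathbb{E}\bigl[e^{i\xi X}-1-i\xi X\bigr].
\end{equation*}
This subtraction is crucial: without it, the order $|\xi|^{\alpha}$ cannot be reached when $\alpha>1$. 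It would be preferable to use a second-order Taylor expansion, but that is not available because $X$ is in the domain of attraction of a stable law with $\alpha<2$, so $\mathbb{E}[X^{2}]=\infty$; this forces the truncation argument below.

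First I would dispose of the case $|\xi|\geq 1$ by the crude bound $|\phi_X(\xi)-1|\leq 2\leq 2|\xi|^{\alpha}\leq 2|\xi|^{\alpha}(1+\|a_1\|_\infty+\|a_2\|_\infty)$. For $|\xi|<1$, set $M=1/|\xi|>1$. The tail assumption on $F$ directly yields, for every $y\geq 1$,
\begin{equation*}
\mathbb{P}(|X|>y)\;\leq\;\frac{K}{y^{\alpha}},\qquad K:=2c+\|a_1\|_\infty+\|a_2\|_\infty,
\end{equation*}
while $\mathbb{P}(|X|>y)\leq 1$ for $y\in[0,1]$. Using $|e^{iu}-1-iu|\leq u^{2}/2$ on $\{|X|\leq M\}$ and $|e^{iu}-1-iu|\leq 2+|u|$ on $\{|X|>M\}$,
\begin{equation*}
|\phi_X(\xi)-1|\;\leq\;\frac{\xi^{2}}{2}\mathbb{E}\bigl[X^{2}\mathbf{1}_{|X|\leq M}\bigr]\;+\;2\mathbb{P}(|X|>M)\;+\;|\xi|\,\mathbb{E}\bigl[|X|\mathbf{1}_{|X|>M}\bigr].
\end{equation*}

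I would then bound each of the three terms by Fubini/integration by parts in terms of $\mathbb{P}(|X|>y)$. For the tail probability, $2\mathbb{P}(|X|>M)\leq 2KM^{-\alpha}=2K|\xi|^{\alpha}$. For the tail first moment, $\mathbb{E}[|X|\mathbf{1}_{|X|>M}]=M\mathbb{P}(|X|>M)+\int_{M}^{\infty}\mathbb{P}(|X|>y)dy\leq \frac{\alpha K}{\alpha-1}M^{1-\alpha}$, so that $|\xi|\cdot\mathbb{E}[|X|\mathbf{1}_{|X|>M}]\leq\frac{\alpha K}{\alpha-1}|\xi|^{\alpha}$. For the truncated second moment,
\begin{equation*}
\mathbb{E}\bigl[X^{2}\mathbf{1}_{|X|\leq M}\bigr]\;\leq\;\int_{0}^{M}2y\,\mathbb{P}(|X|>y)\,dy\;\leq\;1+2K\int_{1}^{M}y^{1-\alpha}dy\;\leq\;1+\frac{2K}{2-\alpha}M^{2-\alpha},
\end{equation*}
so $\tfrac{\xi^{2}}{2}\mathbb{E}[X^{2}\mathbf{1}_{|X|\leq M}]\leq \tfrac{\xi^{2}}{2}+\tfrac{K}{2-\alpha}|\xi|^{\alpha}\leq\bigl(\tfrac12+\tfrac{K}{2-\alpha}\bigr)|\xi|^{\alpha}$ since $|\xi|<1$ implies $\xi^{2}\leq|\xi|^{\alpha}$. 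Summing the three contributions yields $|\phi_X(\xi)-1|\leq C'_\alpha(1+K)|\xi|^{\alpha}\leq C_\alpha(1+\|a_1\|_\infty+\|a_2\|_\infty)|\xi|^{\alpha}$ with $C_\alpha$ depending only on $\alpha$ (absorbing the fixed constant $c=c(\alpha)$ into $C_\alpha$).

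The only genuine obstacle is a bookkeeping one: one has to check that the constant $K$ truly factors as $C(\alpha)(1+\|a_1\|_\infty+\|a_2\|_\infty)$, which holds because $c$ in Theorem \ref{StableCLT} is a function of $\alpha$ alone. Everything else is a routine consequence of the centering and the Pareto-type tail.
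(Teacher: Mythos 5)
Your proof is correct, but it takes a genuinely different route from the paper's. The paper exploits the explicit Pareto-type form of $F$ from Theorem \ref{StableCLT}: after writing $\phi_X(\xi)-1=\mathbb{E}[e^{i\xi X}-1-i\xi X]$ exactly as you do, it integrates by parts against $d\bigl((c+a_1(x))/x^{\alpha}\bigr)$, evaluates the resulting ``$c$-part'' $\int_0^{+\infty}(e^{ix}-1)x^{-\alpha}dx$ by a contour integration, and obtains the exact identity
\begin{align*}
\phi_X(\xi)-1=-\xi^{\alpha}+i\xi^{\alpha}\int_{0}^{+\infty}(e^{ix}-1)\frac{a_1(x/\xi)}{x^{\alpha}}\,dx-i\xi^{\alpha}\int_{-\infty}^{0}(e^{ix}-1)\frac{a_2(x/\xi)}{(-x)^{\alpha}}\,dx,
\end{align*}
from which the bound follows by $|a_i(x/\xi)|\leq\|a_i\|_\infty$ and the convergence of $\int_0^\infty|e^{ix}-1|x^{-\alpha}dx$ for $\alpha\in(1,2)$. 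You instead truncate at the scale $M=1/|\xi|$ and use only the tail estimate $\mathbb{P}(|X|>y)\leq Ky^{-\alpha}$; your three-term bookkeeping (truncated second moment, tail probability, tail first moment) is standard and all the exponents check out, including the observation $\xi^2\leq|\xi|^{\alpha}$ for $|\xi|<1$. Your argument is more elementary (no contour integral, no integration by parts against the distribution function) and more robust, since it applies to any centered law with that tail bound; what it gives up is the exact leading term $-\xi^{\alpha}$ and the explicit remainder integrals in the variables $a_1(x/\xi)$, $a_2(x/\xi)$, which the paper reuses elsewhere in the appendix (notably in the representation \eqref{eq-formulaW2} of $\phi_X^*$ and in Lemma \ref{pointwise}). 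For the lemma as stated, your proof is a complete substitute.
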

\begin{proof}
See the appendix \ref{sec:technical-proofs}.
\end{proof}

\begin{remark}
We note in particular that the previous lemma implies:
\begin{align*}
\left|\phi_X\big(\frac{\xi}{n^{\frac{1}{\alpha}}}\big)-1\right|\leq C_{\alpha}\dfrac{\mid\xi\mid^{\alpha}}{n}\big(1+\mid\mid a_1\mid\mid_{\infty}+\mid\mid a_2\mid\mid_{\infty}\big).
\end{align*}
\end{remark}
\noindent
Now, we want to bound pointwisely the residual term, namely:
\begin{align*}
R^{\alpha}(\xi)&=\frac{1}{\alpha}\int_0^{+\infty}e^{ix}\frac{a_1\big(\frac{x}{\xi}\big)}{x^{\alpha-1}}dx-\frac{i}{\alpha}\int_0^{+\infty}(e^{i x}-1)\frac{a_1\big(\frac{x}{\xi}\big)}{x^\alpha}dx\\
&+\frac{1}{\alpha}\int_{-\infty}^0e^{ix}\frac{a_2\big(\frac{x}{\xi}\big)}{(-x)^{\alpha-1}}dx+\frac{i}{\alpha}\int_{-\infty}^0(e^{i x}-1)\frac{a_2\big(\frac{x}{\xi}\big)}{(-x)^\alpha}dx.
\end{align*}
For this purpose, we have the following lemma.
\begin{lemma}\label{pointwise}
For any $i\in\{1,2\}$, there exist strictly positive constants $C^i_{\alpha,1}$ and $C^i_{\alpha,2}$ depending on the function $a_i$ and $\alpha$ only, such that:
\begin{align*}
&\forall \xi\in\mathbb{R},\ \bigg\lvert\int_0^{+\infty}(e^{i x}-1)\frac{a_1\big(\frac{x}{\xi}\big)}{x^\alpha}dx\bigg\rvert \leq C^1_{\alpha,1}\mid \xi\mid^{\frac{2-\alpha}{2}},\\
&\forall \xi\in\mathbb{R},\ \bigg\lvert\int_0^{+\infty}e^{ix}\frac{a_1\big(\frac{x}{\xi}\big)}{x^{\alpha-1}}dx\bigg\rvert \leq C^1_{\alpha,2}\mid \xi\mid^{2-\alpha},
\end{align*}
and similarly for the integrals with the function $a_2(.)$.
\end{lemma}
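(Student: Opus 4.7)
The strategy is to reduce both integrals, via the change of variable $y = x/\xi$, to integrals whose dependence on $\xi$ is explicit as a power of $|\xi|$ multiplied by a uniformly bounded quantity. I will treat the case $\xi > 0$; the case $\xi < 0$ follows by a parallel argument. The key preliminary observation is that the hypotheses on $a_1$, namely boundedness on $\mathbb{R}^*_+$ together with the existence of $\lim_{y \to +\infty} y a_1(y)$, yield a constant $K > 0$ with $|a_1(y)| \leq K\min(1, 1/y)$ for all $y > 0$. Consequently, for any $\beta \in (0,1)$, the weighted integral $\int_0^{+\infty} |a_1(y)|\, y^{-\beta}\, dy$ is finite: the singularity at $0$ is controlled by $\beta < 1$, and the tail by $|a_1(y)| \leq K/y$.

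For the second inequality, the substitution $y = x/\xi$ immediately yields
\[
\int_0^{+\infty} e^{ix}\, \frac{a_1(x/\xi)}{x^{\alpha-1}}\, dx = \xi^{2-\alpha} \int_0^{+\infty} e^{i\xi y}\, \frac{a_1(y)}{y^{\alpha-1}}\, dy,
\]
and since $\alpha - 1 \in (0,1)$, the preliminary observation with $\beta = \alpha-1$ bounds the modulus of the inner integral uniformly in $\xi$, giving the desired estimate with $C^1_{\alpha,2} = \int_0^{+\infty} |a_1(y)|\, y^{1-\alpha}\, dy$. No oscillation needs to be exploited here.

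For the first inequality, the same substitution yields
\[
\int_0^{+\infty} (e^{ix}-1)\, \frac{a_1(x/\xi)}{x^{\alpha}}\, dx = \xi^{1-\alpha} \int_0^{+\infty} (e^{i\xi y}-1)\, \frac{a_1(y)}{y^{\alpha}}\, dy,
\]
but the inner integrand is no longer absolutely integrable near $y = 0$ (because $\alpha > 1$), so the vanishing of $e^{i\xi y}-1$ at $y = 0$ must be used. The plan is to apply the interpolation inequality
\[
|e^{i\xi y} - 1| \leq \min\bigl(|\xi y|,\, 2\bigr) \leq 2^{1-\alpha/2}\, |\xi y|^{\alpha/2},
\]
valid since $\alpha/2 \in (0,1)$. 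Substituting this bound yields
\[
\left|\int_0^{+\infty} (e^{i\xi y}-1)\, \frac{a_1(y)}{y^{\alpha}}\, dy\right| \leq 2^{1-\alpha/2}\, |\xi|^{\alpha/2} \int_0^{+\infty} \frac{|a_1(y)|}{y^{\alpha/2}}\, dy,
\]
and the remaining integral converges by the preliminary observation with $\beta = \alpha/2 \in (1/2, 1)$. Multiplying by $\xi^{1-\alpha}$ produces the claimed bound $C|\xi|^{(2-\alpha)/2}$.

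The main subtlety lies in the choice of interpolation exponent $\theta = \alpha/2$: this is the unique value for which the resulting factor $|\xi|^{1-\alpha+\theta}$ coincides with the target $|\xi|^{(2-\alpha)/2}$ and simultaneously lies in the admissible range $(\alpha-1,1)$ where $\int_0^{+\infty} |a_1(y)|\, y^{\theta-\alpha}\, dy$ is finite. This admissible range is non-empty precisely because $\alpha \in (1, 2)$, so the restriction on $\alpha$ is essential. The companion bounds for $a_2$ follow by the symmetric argument on $(-\infty, 0)$.
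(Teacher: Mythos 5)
Your proof is correct, but it runs along a different track than the paper's. The paper keeps the integration variable $x$, splits the domain at a threshold $R>0$, bounds the near piece by $\|a_1\|_\infty\int_0^R x^{1-\alpha}dx \lesssim R^{2-\alpha}$ (using $|e^{ix}-1|\le x$) and the far piece by $2\|y\,a_1(y)\|_\infty\,|\xi|\int_R^\infty x^{-\alpha-1}dx \lesssim |\xi|R^{-\alpha}$, and then optimizes in $R$ (giving $R\sim|\xi|^{1/2}$ for the first integral, and the analogous splitting with $R\sim|\xi|$ for the second). You instead rescale $y=x/\xi$ to make the power of $\xi$ explicit, and then trade the two regimes via the single pointwise interpolation $|e^{i\xi y}-1|\le\min(|\xi y|,2)\le 2^{1-\alpha/2}|\xi y|^{\alpha/2}$ against the integrability of $|a_1(y)|y^{-\alpha/2}$, which your preliminary bound $|a_1(y)|\le K\min(1,1/y)$ (the same two ingredients, $\|a_1\|_\infty$ and $\|y\,a_1(y)\|_\infty$, that the paper uses) guarantees. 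The two arguments are equivalent in content -- your interpolation in $\theta=\alpha/2$ is the continuous form of the paper's optimization in $R$ -- but yours dispenses with the optimization step, makes the origin of the exponent $(2-\alpha)/2$ transparent, and for the second integral is genuinely simpler: after rescaling, uniform boundedness in $\xi$ follows from plain absolute integrability of $|a_1(y)|y^{1-\alpha}$, with no splitting and no use of oscillation, whereas the paper's ``proceed exactly in the same way'' still requires the two-piece estimate. One cosmetic remark applying equally to you and to the paper: for $\xi<0$ the expression $a_1(x/\xi)$ evaluates $a_1$ at negative arguments, so the ``parallel argument'' really refers to the companion integrals involving $a_2$ on $(-\infty,0)$, which is how the lemma is used in the text; this is an imprecision inherited from the statement, not a gap in your argument.
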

\begin{proof}
See the appendix \ref{sec:technical-proofs}.
\end{proof}
\noindent
From the two previous lemmas, we are now in position to provide a bound on the following term for any $\nu\in [0,\xi]$:
\begin{equation}\label{pointwiseN1}
\nu^{\alpha-1}\mid\big(\phi_X\big(\frac{\nu}{n^{\frac{1}{\alpha}}}\big)-\phi_X^*\big(\frac{\nu}{n^{\frac{1}{\alpha}}}\big)\big)\mid\leq C_\alpha \bigg( \frac{\xi^{2\alpha-1}}{n}+\frac{\xi^{\frac{\alpha}{2}}}{n^{\frac{1}{\alpha}-\frac{1}{2}}}+\frac{ \xi}{n^{\frac{2}{\alpha}-1}}\bigg).
\end{equation}
\noindent
Combining the bound (\ref{pointwiseN1}) together with Lemma \ref{DawsonStable}, we obtain the following bound for the difference between the characteristic function of $W$ and the one of $Z^{\alpha}$ for $\xi\in\mathbb{R}$:
\begin{equation}\label{EstStableDiff}
|\phi_W(\xi)-\phi_{Z^{\alpha}}(\xi)| \leq C_\alpha \bigg( \frac{\mid\xi\mid^{\alpha}}{n}+\frac{\mid\xi\mid^{1-\frac{\alpha}{2}}}{n^{\frac{1}{\alpha}-\frac{1}{2}}}+\frac{\mid \xi\mid^{2-\alpha}}{n^{\frac{2}{\alpha}-1}}\bigg).
\end{equation}
We conclude the discussion on stable convergence by applying a
variant of Theorem~\ref{theor:bounds-char-funct-1} to obtain
correct-order rates of convergence towards stable distributions in smooth Wasserstein distance. We use as well the bound \ref{EstStableDiff} together with Ess\'een inequality (\ref{eq:1}) to provide a rate of convergence in Kolmogorov distance.
\begin{theorem}
  \label{theor:ratesstable-distributions}
There exists a constant $C>0$ depending on $\alpha, a_1$ and $a_2$
such that 
\begin{equation}
  \label{eq:41}
  \mathcal{W}_3(W, Z^{\alpha}) \le C \,  {n^{\frac{2\alpha}{2\alpha+1}\left(\frac{1}{2}-\frac{1}{\alpha}\right)} }.
\end{equation}
\end{theorem}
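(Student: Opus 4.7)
The plan is to feed the characteristic-function bound~\eqref{EstStableDiff} into (a small variant of) the transfer principle of Theorem~\ref{theor:bounds-char-funct-1}, with tail exponent $\gamma=\alpha$, and then read off the rate from the dominant term.

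First, I would check that the polynomial tail hypothesis (H2') holds with $\gamma=\alpha$. The target $Z^{\alpha}$ is a symmetric stable law with $\alpha\in(1,2)$ and satisfies $\mathbb{P}(|Z^{\alpha}|>A)\leq C A^{-\alpha}$ by classical properties of stable distributions. For $W$, the assumption $X\in\operatorname{Dom}(Z^{\alpha})$ combined with the explicit form of the distribution function in Theorem~\ref{StableCLT} gives $\mathbb{P}(|X|>A)\leq C A^{-\alpha}$, and a standard Nagaev-type truncation argument for heavy-tailed i.i.d.\ sums transfers this bound to $W=n^{-1/\alpha}\sum_{i=1}^{n}X_i$ uniformly in $n$.

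Next, I would apply Theorem~\ref{theor:bounds-char-funct-1} with $p=2$ separately to each of the three summands on the right-hand side of~\eqref{EstStableDiff}, namely to each bound of the form $C_\alpha\,\epsilon_i\,|\xi|^{p_i}$ with
\begin{equation*}
(p_1,p_2,p_3)=\bigl(\alpha,\,1-\tfrac{\alpha}{2},\,2-\alpha\bigr),\qquad (\epsilon_1,\epsilon_2,\epsilon_3)=\bigl(n^{-1},\,n^{-(1/\alpha-1/2)},\,n^{-(2/\alpha-1)}\bigr).
\end{equation*}
The one small point of bookkeeping is that Theorem~\ref{theor:bounds-char-funct-1} is stated for integer $p$ and for a bound of the exact form $C'\epsilon|\xi|^{p}$, whereas the $p_i$ here are non-integer. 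Inspecting the Cauchy--Schwarz/Plancherel step of the proof of Theorem~\ref{thm:bounds-char-funct-1}, one sees that for any real $p'\in(0,2]$ the same argument goes through thanks to the elementary bounds $|\xi|^{2p'}\leq 1+|\xi|^{4}$ and $|\xi|^{2p'+2}\leq |\xi|^{2}+|\xi|^{6}$, at the cost of involving $\|f\|_\infty,\ldots,\|f^{(3)}\|_\infty$ in the final estimate instead of just $\|f^{(p)}\|_\infty$ and $\|f^{(p+1)}\|_\infty$. Since we are aiming for $\mathcal{W}_3$, this is still controlled by a universal constant on the test class $\mathcal{H}_3$, and the theorem consequently yields
\begin{equation*}
\mathcal{W}_3(W,Z^{\alpha})\leq C\sum_{i=1}^{3}\epsilon_i^{\tfrac{2\alpha}{2\alpha+1}},
\end{equation*}
with $C$ depending on $\alpha$, $a_1$ and $a_2$.

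Finally, I would compare the three rates. For $\alpha\in(1,2)$ one has $1/\alpha-1/2<2/\alpha-1<1$, so $\epsilon_2$ is the largest of the three and therefore dominates. The corresponding contribution is
\begin{equation*}
\epsilon_2^{\tfrac{2\alpha}{2\alpha+1}}=n^{-(1/\alpha-1/2)\cdot\tfrac{2\alpha}{2\alpha+1}}=n^{\tfrac{2\alpha}{2\alpha+1}\left(\tfrac{1}{2}-\tfrac{1}{\alpha}\right)},
\end{equation*}
which is exactly the claimed rate. The main non-routine step in this scheme is the uniform-in-$n$ tail control on $W$ required to verify (H2'); once this is in hand, everything else is a direct combination of~\eqref{EstStableDiff} with the transfer principle of Section~\ref{sec:bounds-char-funct}.
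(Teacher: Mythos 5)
Your proposal is correct and follows essentially the route the paper intends: it feeds the bound \eqref{EstStableDiff} into a variant of Theorem \ref{theor:bounds-char-funct-1} with polynomial tail exponent $\gamma=\alpha$, the dominant term being $n^{-(1/\alpha-1/2)}$, which yields exactly the exponent $\frac{2\alpha}{2\alpha+1}\left(\frac12-\frac1\alpha\right)$. The details you supply (the uniform-in-$n$ Nagaev-type tail bound for $W$ and the extension of the Plancherel step to non-integer powers in $(0,2]$, at the cost of using $\|F\|_\infty,\dots,\|F^{(3)}\|_\infty$) are precisely the content of the ``variant'' the paper invokes without spelling out.
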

\begin{remark}
This result should be compared with the one of \cite{JS05} (see also \cite{rachev1994rate}). In particular, they obtain bounds of the order $n^{1/2 - 1/\alpha}$ for the classical 2-Wasserstein distance.
\end{remark}

\begin{theorem}\label{kolmostable}
There exists a constant strictly positive $C>0$ depending on $\alpha$, $a_1$ and $a_2$ such that:
\begin{align*}
\underset{x\in\mathbb{R}}{\sup}\mid \mathbb{P}\big(W\leq x\big)- \mathbb{P}\big(Z^{\alpha}\leq x\big)\mid \leq \frac{C}{n^{\frac{1}{2}(1-\frac{\alpha}{2})}}.
\end{align*}
\end{theorem}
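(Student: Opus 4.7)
The natural route is to plug the pointwise estimate \eqref{EstStableDiff} into Esseen's inequality \eqref{eq:1} and then optimize in the truncation parameter $T$. Since $\alpha\in(1,2)$, the symmetric $\alpha$-stable law has a bounded smooth density, so \eqref{eq:1} applies with $m=\sup_{x}p_{Z^{\alpha}}(x)<\infty$, yielding, for every $T>0$,
\begin{equation*}
\operatorname{Kol}(W,Z^{\alpha})\leq \frac{1}{\pi}\int_{-T}^{T}\frac{\bigl|\phi_W(t)-\phi_{Z^{\alpha}}(t)\bigr|}{|t|}\,dt+\frac{24m}{\pi T}.
\end{equation*}

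The next step is to use \eqref{EstStableDiff}. The integrand is controlled, up to a constant $C_{\alpha}$, by $|t|^{\alpha-1}/n+|t|^{-\alpha/2}/n^{1/\alpha-1/2}+|t|^{1-\alpha}/n^{2/\alpha-1}$. For $\alpha\in(1,2)$ the three exponents lie in $(-1,1)$, so each piece is integrable on $[-T,T]$. Integrating and using symmetry yields a bound of the form
\begin{equation*}
\frac{1}{\pi}\int_{-T}^{T}\frac{|\phi_W(t)-\phi_{Z^{\alpha}}(t)|}{|t|}\,dt\leq C'_{\alpha}\left(\frac{T^{\alpha}}{n}+\frac{T^{1-\alpha/2}}{n^{1/\alpha-1/2}}+\frac{T^{2-\alpha}}{n^{2/\alpha-1}}\right).
\end{equation*}

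With the candidate choice $T=n^{(2-\alpha)/4}=n^{(1-\alpha/2)/2}$, the Esseen remainder $24m/(\pi T)$ is already of the target order $n^{-(1-\alpha/2)/2}$. The remaining task is algebraic: verify that each of the three $T$-dependent contributions is of order at most $n^{-(1-\alpha/2)/2}$ for every $\alpha\in(1,2)$. For the first term, $T^{\alpha}/n=n^{\alpha(2-\alpha)/4-1}$ and one checks that $\alpha(2-\alpha)/4-1\leq -(2-\alpha)/4$, i.e.\ $(2-\alpha)(\alpha+1)\leq 4$, which holds on $(1,2)$. The second term's exponent is $(2-\alpha)(1-\alpha/2)/4-(2-\alpha)/(2\alpha)=-(2-\alpha)(\alpha^{2}-2\alpha+4)/(8\alpha)$, and since $(\alpha^{2}-2\alpha+4)/(2\alpha)\geq 1$ on $(1,2)$, this exponent is $\leq -(2-\alpha)/4$. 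A similar (easier) computation handles the third term. Combining the three bounds with $1/T$ yields $\operatorname{Kol}(W,Z^{\alpha})\leq C\,n^{-(1-\alpha/2)/2}$.

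The proof is mostly mechanical; the main (minor) obstacle is the tuning of $T$, since the three contributions of \eqref{EstStableDiff} scale differently in $n$ and none is universally dominant over the regime $\alpha\in(1,2)$. The choice $T=n^{(2-\alpha)/4}$ works because it sits in the nonempty window determined by the three upper bounds $T\leq n^{(2+\alpha)/(4\alpha)}$, $T\leq n^{(2-\alpha)/(2\alpha)}$ and $T\leq n^{(2-\alpha+4/\alpha-4(2-\alpha)/4)/(2-\alpha)}$ against the lower bound $T\geq n^{(2-\alpha)/4}$ required to make $1/T$ match the claimed rate. This is also why the rate \eqref{eq:41} obtained from Theorem~\ref{theor:bounds-char-funct-1} and the present Kolmogorov rate differ: the optimal choices of $T$ are dictated by different balancing equations.
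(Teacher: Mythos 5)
Your proof is correct and follows essentially the same route as the paper: Esseen's inequality combined with the pointwise bound \eqref{EstStableDiff} and a choice of the truncation level $T$. The only difference is cosmetic — you take $T=n^{(2-\alpha)/4}$ so that the remainder $1/T$ directly matches the claimed rate, whereas the paper takes $T=n^{1/\alpha-1/2}$ and then identifies $n^{-\frac{1}{2}(1-\alpha/2)}$ as the dominant term; both choices yield the same bound.
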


\begin{proof}
By Esséen smoothing lemma, for any $k\geq 1$ and for any $T>0$, we have:
\begin{align*}
\mid \mathbb{P}\big(W\leq x\big)- \mathbb{P}\big(Z^{\alpha}\leq x\big)\mid \leq \frac{k}{2\pi}\int_{-T}^T\mid \frac{\phi_W(t)-\phi_{Z^{\alpha}}(t)}{t}\mid dt+\frac{c(k)}{T}.
\end{align*}
Plugging inequality (\ref{EstStableDiff}) and integrating with respect to $t$, we get:
\begin{align*}
\mid \mathbb{P}\big(W\leq x\big)- \mathbb{P}\big(Z^{\alpha}\leq x\big)\mid \leq C_{k,\alpha}\bigg(\frac{T^\alpha}{n}+\frac{T^{1-\frac{\alpha}{2}}}{n^{\frac{1}{\alpha}-\frac{1}{2}}}+\frac{T^{2-\alpha}}{n^{\frac{2}{\alpha}-1}}\bigg)+\frac{c(k)}{T}.
\end{align*}
Now, we set $T:= n^{1/\alpha-1/2}$. By straightforward computations, we get:
\begin{align*}
\mid \mathbb{P}\big(W\leq x\big)- \mathbb{P}\big(Z^{\alpha}\leq x\big)\mid \leq C_{k,\alpha}\bigg( \frac{1}{n^{\frac{\alpha}{2}}}+\frac{1}{n^{\frac{1}{2}(1-\frac{\alpha}{2})}}+\frac{1}{n^{1-\frac{\alpha}{2}}}\bigg)+\frac{c(k)}{n^{\frac{1}{\alpha}-\frac{1}{2}}}
\end{align*}
Now we note that for any $\alpha\in (1,2)$ we have the following inequalities:
\begin{align*}
&\frac{1}{2}(1-\frac{\alpha}{2})\leq (1-\frac{\alpha}{2})\leq \frac{\alpha}{2},\\
&\frac{1}{2}(1-\frac{\alpha}{2})\leq \frac{1}{\alpha}-\frac{1}{2}.
\end{align*}
This concludes the proof of the theorem.
\end{proof}

\begin{remark}\label{CompStable}
\begin{itemize}
\item From the results contained in \cite{JS05}, one can obtain a rate of convergence in Kolmogorov distance using the fact that the Kolmogorov distance can be bounded by the square root of the Wasserstein distance when the limiting density is bounded (which is clearly the case). Thus, using the results of \cite{JS05}, one would have the following rate of convergence:
\begin{align*}
\underset{x\in\mathbb{R}}{\sup}\mid \mathbb{P}\big(W\leq x\big)- \mathbb{P}\big(Z^{\alpha}\leq x\big)\mid \leq \dfrac{C_\alpha}{n^{\frac{1}{2}(\frac{1}{\alpha}-\frac{1}{2})}}.
\end{align*}
We note that:
\begin{align*}
\frac{1}{2}(1-\frac{\alpha}{2})-\frac{1}{2}(\frac{1}{\alpha}-\frac{1}{2})\geq 0 \Leftrightarrow (2-\alpha)(\alpha-1)\geq 0.
\end{align*}
which is always the case since $\alpha\in (1,2)$.
\item Under the assumptions on the functions $a_1$ and $a_2$, we can obtain a rate of convergence in Kolmogorov distance thanks to Theorem $1$ of \cite{Hall81}. Indeed, we note that the functions $S(.)$ and $D(.)$ of this article admit the following expressions:
\begin{align*}
\forall x>0,\ &S(x):=\dfrac{a_1(x)+a_2(-x)}{x^\alpha},\\
&D(x):=\dfrac{a_1(x)-a_2(-x)}{x^\alpha}
\end{align*}
Since $\underset{x\rightarrow +\infty}{\lim} xa_1(x)<+\infty$ and $\underset{x\rightarrow -\infty}{\lim} xa_2(x)<+\infty$, for any $\epsilon\in (0, 2-\alpha)$, we have that:
\begin{align*}
\mid S(x) \mid+\mid D(x) \mid = o(x^{-(\alpha+\epsilon)}),
\end{align*}
which produces the rate:
\begin{align*}
\underset{x\in\mathbb{R}}{\sup}\mid \mathbb{P}\big(W\leq x\big)- \mathbb{P}\big(Z^{\alpha}\leq x\big)\mid \leq C_\alpha \dfrac{1}{n^{\frac{\epsilon}{\alpha}}}.
\end{align*}
In particular, we can find $\epsilon\in (0, 2-\alpha)$ such that $\frac{\epsilon}{\alpha}=\frac{1}{2}(1-\frac{\alpha}{2})$.
\end{itemize}
\end{remark}

\subsection{Technical proofs}
\label{sec:technical-proofs}

\begin{proof}[Proof of Proposition \ref{properties}]
Let us prove the first bullet point. We denote by $F_X$ the distribution function of $X$. Let $\xi>0$. By definition and using the fact that $\mathbb{E}[X]=0$, we have:
\begin{align*}
\mathbb{E}\big[iX\exp\big(i\xi X\big)\big]&=i\int_{\mathbb{R}}xe^{i\xi x}dF_X(x),\\
&=i\int_{\mathbb{R}}x(e^{i\xi x}-1)dF_X(x),\\
&=i\int_0^{+\infty}x(e^{i\xi x}-1)dF_X(x)+i\int_{-\infty}^{0}x(e^{i\xi x}-1)dF_X(x)
\end{align*}
Let us deal with the first integral. For this purpose, we fix some interval $[a,b]$ strictly contained in $(0,+\infty)$. We note that $F_X$ is of bounded variation on $[a,b]$ as well as the function $x\rightarrow c/x^{\alpha}$. Thus, the function $x\rightarrow a_1(x)/x^\alpha$ is of bounded variation on $[a,b]$. We can write:
\begin{align*}
\int_a^{b}x(e^{i\xi x}-1)dF_X(x)&=\alpha c\int_a^{b}(e^{i\xi x}-1)\frac{dx}{x^\alpha}-\int_{a}^bx(e^{i\xi x}-1)d\big(\frac{a_1(x)}{x^\alpha}\big),\\
&=\alpha c\int_a^{b}(e^{i\xi x}-1)\frac{dx}{x^\alpha}+i\xi\int_{a}^bxe^{i\xi x}\frac{a_1(x)}{x^\alpha}dx+\int_{a}^b(e^{i\xi x}-1)\frac{a_1(x)}{x^\alpha}dx\\
&-\bigg[x(e^{i\xi x}-1)\frac{a_1(x)}{x^\alpha}\bigg]_a^b,
\end{align*}
where we have performed an integration by parts on the last line. Using the assumptions upon the function $a_1(.)$, we can let $a$ and $b$ tend to $0^+$ and $+\infty$ respectively in order to obtain:
\begin{align*}
\int_0^{+\infty}x(e^{i\xi x}-1)dF_X(x)&=\alpha c\int_0^{+\infty}(e^{i\xi x}-1)\frac{dx}{x^\alpha}+i\xi\int_0^{+\infty}xe^{i\xi x}\frac{a_1(x)}{x^\alpha}dx\\
&+\int_0^{+\infty}(e^{i\xi x}-1)\frac{a_1(x)}{x^\alpha}dx
\end{align*}
Note in particular that the boundary terms disappear since $\alpha\in (1,2)$. Thus,
\begin{align*} 
\int_0^{+\infty}x(e^{i\xi x}-1)dF_X(x)&=c\alpha\xi^{\alpha-1}\int_0^{+\infty}(e^{i x}-1)\frac{dx}{x^{\alpha}}+i\xi\int_0^{+\infty}xe^{i\xi x}\frac{a_1(x)}{x^\alpha}dx\\
&+\int_0^{+\infty}(e^{i\xi x}-1)\frac{a_1(x)}{x^\alpha}dx
\end{align*}
To compute the first integral, we perform the following contour integration: we consider the function of the complex variable $f(z)=(e^{i z}-1)/z^\alpha$ and we integrate it along the contour formed by the upper quarter circle of radius $R$ minus the upper quarter circle of radius $0<r<R$. Letting $r\rightarrow 0$ and $R\rightarrow +\infty$, we obtain:
\begin{align*}
\int_0^{+\infty}(e^{ix}-1)\frac{dx}{x^{\alpha}}=ie^{-\frac{i\pi\alpha}{2}}\dfrac{\Gamma(2-\alpha)}{1-\alpha}.
\end{align*}
Thus, we have:
\begin{align*}
\int_0^{+\infty}x(e^{i\xi x}-1)dF_X(x)&=ic\alpha\xi^{\alpha-1}e^{-\frac{i\pi\alpha}{2}}\dfrac{\Gamma(2-\alpha)}{1-\alpha}+i\xi^{\alpha-1}\int_0^{+\infty}e^{ix}\frac{a_1\big(\frac{x}{\xi}\big)}{x^{\alpha-1}}dx\\
&+\xi^{\alpha-1}\int_0^{+\infty}(e^{i x}-1)\frac{a_1\big(\frac{x}{\xi}\big)}{x^\alpha}dx.
\end{align*}
Similarly, for the negative values of $X$, we have the following:
\begin{align*}
\int_{-\infty}^{0}x(e^{i\xi x}-1)dF_X(x)&=i\alpha c\xi^{\alpha-1}e^{+\frac{i\pi\alpha}{2}}\dfrac{\Gamma(2-\alpha)}{1-\alpha}+i\xi^{\alpha-1}\int_{-\infty}^0e^{ix}\frac{a_2\big(\frac{x}{\xi}\big)}{(-x)^{\alpha-1}}dx\\
&-\xi^{\alpha-1}\int_{-\infty}^0(e^{i x}-1)\frac{a_2\big(\frac{x}{\xi}\big)}{(-x)^\alpha}dx.
\end{align*}
Thus, using the explicit expression for $c$, we obtain:
\begin{align*}
\mathbb{E}\big[iX\exp\big(i\xi X\big)\big]&=-\alpha\xi^{\alpha-1}-\xi^{\alpha-1}\int_0^{+\infty}e^{ix}\frac{a_1\big(\frac{x}{\xi}\big)}{x^{\alpha-1}}dx\\
&+i\xi^{\alpha-1}\int_0^{+\infty}(e^{i x}-1)\frac{a_1\big(\frac{x}{\xi}\big)}{x^\alpha}dx-\xi^{\alpha-1}\int_{-\infty}^0e^{ix}\frac{a_2\big(\frac{x}{\xi}\big)}{(-x)^{\alpha-1}}dx\\
&-i\xi^{\alpha-1}\int_{-\infty}^0(e^{i x}-1)\frac{a_2\big(\frac{x}{\xi}\big)}{(-x)^\alpha}dx.
\end{align*}
Now, using the assumptions upon the functions $a_1(.)$ and $a_2(.)$ and Lebesgue dominated convergence theorem, we have:
\begin{align*}
\phi_X^*(0^+)=1.
\end{align*}
We proceed similarly for $\phi_X^*(0^-)$. The second bullet point is trivial. Let us prove the third and fourth ones. We consider the following linear functional on $S\big(\mathbb{R}\big)$:
\begin{align*}
F:\ \psi\longrightarrow \frac{1}{2\pi}\int_{\mathbb{R}}\mathcal{F}\big(\psi\big)(\xi)\phi_X^*(\xi)d\xi.
\end{align*}
By the previous computations, for any $\xi>0$ (and similarly for $\xi<0$), we have:
\begin{align*}
\phi_X^*(\xi)&=1+\frac{1}{\alpha}\int_0^{+\infty}e^{ix}\frac{a_1\big(\frac{x}{\xi}\big)}{x^{\alpha-1}}dx\\
&-\frac{i}{\alpha}\int_0^{+\infty}(e^{i x}-1)\frac{a_1\big(\frac{x}{\xi}\big)}{x^\alpha}dx+\frac{1}{\alpha}\int_{-\infty}^0e^{ix}\frac{a_2\big(\frac{x}{\xi}\big)}{(-x)^{\alpha-1}}dx\\
&+\frac{i}{\alpha}\int_{-\infty}^0(e^{i x}-1)\frac{a_2\big(\frac{x}{\xi}\big)}{(-x)^\alpha}dx.
\end{align*}
This implies the following simple bound on $\phi_X^*(\xi)$:
\begin{align*}
\mid\phi_X^*(\xi)\mid\leq 1+C_{\alpha}\big(\mid\mid a_1\mid\mid_{\infty}+\mid\mid a_2\mid\mid_{\infty}+\mid\mid x a_1(.)\mid\mid_{\infty}\mid\xi\mid+\mid\mid x a_2(.)\mid\mid_{\infty}\mid\xi\mid\big),
\end{align*}
for some constant, $C_{\alpha}$, strictly positive depending on $\alpha$ only. Thus, we have:
\begin{align*}
\mid F(\psi)\mid\leq C_{\alpha,X,p,q}\mid\mid\psi\mid\mid_{p,q,\infty},
\end{align*}
for some $p,q\in\mathbb{N}$ such that,
\begin{align*}
\mid\mid\psi\mid\mid_{p,q,\infty}=\underset{x\in\mathbb{R}}{\sup}\big((1+\mid x\mid)^p\mid\psi^{(q)}(x)\mid\big).
\end{align*}
This proves the third bullet point. For the fourth one, we introduce the following subspace of $S\big(\mathbb{R}\big)$:
\begin{align*}
S_0\big(\mathbb{R}\big)=\{\psi\in S\big(\mathbb{R}\big):\ \forall n\in\mathbb{N},\ \int_\mathbb{R}\psi(x)x^ndx=0 \}.
\end{align*}
Note that this space is invariant under the action of the pseudo-differential operator, $\mathcal{D}^{\alpha-1}$. Let $\psi\in S_0\big(\mathbb{R}\big)$. We have:
\begin{align*}
<T_X^*;\mathcal{D}^{\alpha-1}(\psi)>&=F\big(\mathcal{D}^{\alpha-1}(\psi)\big),\\
&=\frac{1}{2\pi}\int_{\mathbb{R}}\mathcal{F}\big(\mathcal{D}^{\alpha-1}(\psi)\big)(\xi)\phi_X^*(\xi)d\xi,\\
&=\frac{1}{2\pi}\int_{\mathbb{R}}\mathcal{F}\big(\psi\big)(\xi)\big(\dfrac{i\alpha\mid\xi\mid^\alpha}{\xi}\big)\dfrac{-\xi}{\alpha\mid\xi\mid^\alpha}(\phi_X(\xi))'d\xi,\\
&=\frac{i}{2\pi}\int_{\mathbb{R}}(\mathcal{F}\big(\psi\big)(\xi))'\phi_X(\xi)d\xi,\\
&=\mathbb{E}\big[X\psi(X)\big]
\end{align*}
To conclude, we have to prove the fifth bullet point. Let $\xi>0$. Note that the characteristic function of $Z^{\alpha}$ can not be equal to zero. Using straightforward computations, we have:
\begin{align*}
\dfrac{d}{d\xi}\bigg(\frac{\phi_X(\xi)- \phi_{Z^{\alpha}}(\xi)}{ \phi_{Z^{\alpha}}(\xi)}\bigg)\phi_{Z^{\alpha}}(\xi)&=\phi'_X(\xi)-\phi_X(\xi)\dfrac{\phi'_{Z^{\alpha}}(\xi)}{\phi_{Z^{\alpha}}(\xi)},\\
&=-\alpha\xi^{\alpha-1}\phi_X^*(\xi)+\phi_X(\xi)\alpha\xi^{\alpha-1},\\
&=\alpha\xi^{\alpha-1}\big(\phi_X(\xi)-\phi_X^*(\xi)\big).
\end{align*}
The result follows by integration.
\end{proof}
\noindent

\begin{proof}[Proof of Lemma \ref{orderN}]
Let $\xi>0$. By definition and using the fact that $X$ is centered, we have:
\begin{align*}
\phi_X\big(\xi\big)-1&=\int_{\mathbb{R}}\big(e^{ix\xi}-1\big)dF_X(x),\\
&=\int_{\mathbb{R}}\big(e^{ix\xi}-1-ix\xi\big)dF_X(x),\\
&=\int_{0}^{+\infty}\big(e^{ix\xi}-1-ix\xi\big)dF_X(x)+\int_{-\infty}^0\big(e^{ix\xi}-1-ix\xi\big)dF_X(x)
\end{align*}
Let us deal with the first term. By integration by parts, we have:
\begin{align*}
\int_{0}^{+\infty}\big(e^{ix\xi}-1-ix\xi\big)dF_X(x)=i\xi\int_{0}^{+\infty}\big(e^{ix\xi}-1\big)\dfrac{c+a_1(x)}{x^\alpha}dx
\end{align*}
Note that the boundary terms disappear since $\alpha\in (1,2)$. We have:
\begin{align*}
\int_{0}^{+\infty}\big(e^{ix\xi}-1-ix\xi\big)dF_X(x)&=i\xi\bigg[c\int_{0}^{+\infty}\big(e^{ix\xi}-1\big)\dfrac{dx}{x^\alpha}+\int_{0}^{+\infty}\big(e^{ix\xi}-1\big)\dfrac{a_1(x)}{x^\alpha}dx\bigg]\\
&=-c\xi^{\alpha}e^{-i\frac{\pi}{2}\alpha}\dfrac{\Gamma(2-\alpha)}{1-\alpha}+i\xi^{\alpha}\int_{0}^{+\infty}\big(e^{ix}-1\big)\dfrac{a_1\big(\frac{x}{\xi}\big)}{x^\alpha}dx.
\end{align*}
Similarly, for the term concerning negative values of $X$, we obtain the following expression:
\begin{align*}
\int_{-\infty}^0\big(e^{ix\xi}-1-ix\xi\big)dF_X(x)=-c\xi^{\alpha}e^{i\frac{\pi}{2}\alpha}\dfrac{\Gamma(2-\alpha)}{1-\alpha}-i\xi^\alpha\int_{-\infty}^{0}\big(e^{ix}-1\big)\dfrac{a_2\big(\frac{x}{\xi}\big)}{(-x)^\alpha}dx
\end{align*}
Thus, we have:
\begin{align*}
\phi_X\big(\xi\big)-1&=-\xi^{\alpha}+i\xi^{\alpha}\int_{0}^{+\infty}\big(e^{ix}-1\big)\dfrac{a_1\big(\frac{x}{\xi}\big)}{x^\alpha}dx\\
&-i\xi^\alpha\int_{-\infty}^{0}\big(e^{ix}-1\big)\dfrac{a_2\big(\frac{x}{\xi}\big)}{(-x)^\alpha}dx.
\end{align*}
This expression leads to the following easy bound:
\begin{align*}
\mid\phi_X\big(\xi\big)-1\mid\leq C_{\alpha}\mid\xi\mid^{\alpha}\big(1+\mid\mid a_1\mid\mid_{\infty}+\mid\mid a_2\mid\mid_{\infty}\big).
\end{align*}
A similar reasoning for $\xi<0$ ends the proof of the lemma.
\end{proof}

\begin{proof}[Proof of Lemma \ref{pointwise}] Let $R>0$ and $\xi>0$. We have:
\begin{align*}
\bigg\lvert\int_0^{+\infty}(e^{i x}-1)\frac{a_1\big(\frac{x}{\xi}\big)}{x^\alpha}dx\bigg\rvert \leq \bigg\lvert\int_0^{R}(e^{i x}-1)\frac{a_1\big(\frac{x}{\xi}\big)}{x^\alpha}dx\bigg\rvert+\bigg\lvert\int_R^{+\infty}(e^{i x}-1)\frac{a_1\big(\frac{x}{\xi}\big)}{x^\alpha}dx\bigg\rvert.
\end{align*}
Let us deal with the first term of the right-hand side of the previous inequality:
\begin{align*}
\bigg\lvert\int_0^{R}(e^{i x}-1)\frac{a_1\big(\frac{x}{\xi}\big)}{x^\alpha}dx\bigg\rvert &\leq \mid\mid a_1\mid\mid_\infty\int_0^{R}\frac{dx}{x^{\alpha-1}},\\
&\leq \mid\mid a_1\mid\mid_\infty \dfrac{R^{2-\alpha}}{2-\alpha}.
\end{align*}
For the second term, we have:
\begin{align*}
\bigg\lvert\int_R^{+\infty}(e^{i x}-1)\frac{a_1\big(\frac{x}{\xi}\big)}{x^\alpha}dx\bigg\rvert&\leq 2\int_R^{+\infty}\dfrac{\mid a_1\big(\frac{x}{\xi}\big)\mid}{x^\alpha}dx,\\
&\leq 2  \mid\mid xa_1(.) \mid\mid_{\infty}\mid\xi\mid\int_R^{+\infty}\dfrac{dx}{x^{\alpha+1}},\\
&\leq 2  \mid\mid xa_1(.) \mid\mid_{\infty}\mid\xi\mid\dfrac{R^{-\alpha}}{\alpha}.
\end{align*}
Optimising in $R>0$ leads to:
\begin{align*}
\bigg\lvert\int_0^{+\infty}(e^{i x}-1)\frac{a_1\big(\frac{x}{\xi}\big)}{x^\alpha}dx\bigg\rvert \leq C^1_{\alpha,1}\mid \xi\mid^{\frac{2-\alpha}{2}},
\end{align*}
for some $C^1_{\alpha,1}>0$ depending on $\mid\mid xa_1(.) \mid\mid_{\infty}$, $\mid\mid a_1 \mid\mid_{\infty}$ and $\alpha$ only. We proceed exactly in the same way to obtain the appropriate estimates for the other integrals.
\end{proof}



\end{document}